\theoremstyle{plain}
\newtheorem{theorem}{Theorem}
\newtheorem{lemma}{Lemma}
\newtheorem{corollary}{Corollary}
\newtheorem{proposition}{Proposition}
\theoremstyle{definition}
\newcommand{\Z}{\mathbb{Z}}
\newcommand{\W}{\mathcal{W}} 
\newcommand{\U}{\mathcal{U}}
\newcommand{\Oo}{\mathcal{O}}
\newcommand{\C}{\mathbb{C}}
\numberwithin{equation}{section} 
\newcommand{\nn}{\nonumber \\}
 \newcommand{\ord}{\mbox{\rm ord}}
\newcommand{\wt}{\mbox{\rm wt} }
\newcommand{\N}{\mathbb{N}}
\newcommand{\F}{\mathcal{F}}
\newcommand{\one}{\mathbf{1}}
\begin{document}
\title[Sequences of multiple products and cohomology classes]   
{Sequences of multiple products and 
cohomology classes for foliations of complex curves} 
\author{A. Zuevsky}
\address{Institute of Mathematics \\ Czech Academy of Sciences\\ Praha}

\email{zuevsky@yahoo.com}

\begin{abstract}
The idea of transversality is explored in the construction of 
 cohomology theory associated to adapted 
sequences of multiple products of rational functions 
associated to vertex algebra 
  cohomology of codimension one foliations on complex curves. 
Explicit formulas for cohomology invariants results from 
consideration transversality conditions applied to sequences of multiple 
products for elements of cochain transversal complexes defined 
for codimension one foliations. 

AMS Classification: 53C12, 57R20, 17B69

\end{abstract}

\keywords{Foliations; vertex algebras; cohomology; complex curves}    

\maketitle
\begin{center}
{Conflict of Interest and Data availability Statements:}
\end{center}
The author states that:  

1.) The paper does not contain any potential conflicts of interests. 

2.) The paper does not use any datasets. No dataset were generated during and/or analysed 
during the current study. 

3.) The paper includes all data generated or analysed during this study. 

4.) Data sharing is not applicable to this article as no datasets were generated or analysed during the current study.

5.) The data of the paper can be shared openly.  

6.) No AI was used to write this paper. 

\section{Introduction}
\label{introduction}
In this paper we develop algebraic and functional-analytic 
methods of the cohomology theory of foliations 
on complex curves. 
The cohomology techniques applied to smooth manifolds are 
represented both by geometric 
\cite{G, Kaw, N0, Ko, Lawson, Thur} 
and algebraic \cite{GF1}   
approaches to characterization of foliation leaves.   
In the long list of works including 
\cite{Bott,  BS,  CM, Fuks1,  Losik, BG, BGG, Khor, Lawson}
can only partially reflect the contemporary theory of foliations 
involving a variety of approaches.   
As for the theory of vertex algebras 
\cite{BZF, DL, K, FHL}, 
it is represented now by 
a mixture of algebraic, conformal field theory, automorphic forms 
and several other fields of mathematics related studies. 
In the conformal field theory  
algebraic nature of vertex algebra methods applied  
\cite{FMS},  
provides extremely powerful tools to compute correlation functions. 
Geometric sewing constructions of higher genus 
 Riemann surfaces \cite{Y} 
provide models spaces for the construction of 
sequences of multiple products 
while the analytic part stems from the theory of 
vertex algebra correlation functions
and vertex operator algebra bundles defined 
 on complex 
curves \cite{BZF}. 

The idea of a characterization of the space of leaves of a foliation 
in terms of adapted sequences of rational functions  
with specific properties 
originates from  
 conformal field theory methods \cite{FMS, BZF, H2,  Zhu} 
and the algebraic structure of vertex algebra matrix elements. 
To introduce a sequence of multiple products  
for elements of families of cochain complexes
we use the rich algebraic and geometric structure of  
of vertex algebra matrix elements \cite{TZ, TZ1, TZ2, MTZ}.   
Computation of higher order cohomology invariant including powers of 
rational functions originating from vertex algebra matrix elements 
and generalizing  
 the classical cohomology classes \cite{G} 
constitutes the main result of the paper in addition to 
the general construction of a vertex algebra cohomology theory for 5
foliations and the machinery 
of multiple products for corresponding cochain complexes. 
Our approach to formulation of the foliation cohomology 
makes connection to the classical Lie-algebraic approach \cite{Fuks1} 
since vertex algebra represent, in particular, generalizations of Lie algebras. 
In comparison to the classical 
${\rm \check{C}}$ech-de Rham cohomology of foliations \cite{CM}, 
our approach involves deep algebraic properties related to vertex algebras 
 to establish new higher order cohomology classes. 
 
Let $W^{(i)}$, $1 \le i \le l$, be a set of 
grading-restricted generalized modules for a  
 grading-restricted vertex algebra $V$.  
In Section \ref{spaces}  
the families of cochain complexes
and corresponding coboundary operators 
associated to algebraic completions $\overline{W}^{(i)}$ of 
 grading-restricted vertex algebra modules $W^{(i)}$ 
are constructed to describe algebraic invariants for 
 a codimension one foliation $\F$ on a complex curve. 
In \cite{Huang}, for a grading restricted vertex algebra $V$, and its 
grading-restricted generalized module $W$  
the notion of $\overline{W}$-valued rational function 
was introduced. 
In this paper we denote by $\W_{z_1, \ldots, z_n}$ the space of 
$\overline{W}$-valued differential forms with specific properties
combining $\overline{W}$-valued functions and invariant differentials. 
That notion we describe in Section \ref{transversal}. 
The transversality conditions established  
for sequences of multiple product defined on 
 the families of vertex algebra cochain complexes 
result in sequences of general higher invariants of higher orders 
of functions and their derivatives.    
\subsection{The main result of the paper}
Let $F \in C^{k_i}_{m_i}\left(V, \W^{(i)}, \F\right)$.
Let us introduce the set of cohomology classes, for $k$, $m \in \N$, and 
$\beta=0$, $1$,  
\begin{eqnarray}
\label{pampadur}
\left[Sym \cdot_{\rho_1, \ldots, \rho_l}  
\left( 
 \left(\delta^{k_i}_{m_i}  F^{(i)}\right)^m,  
  \left(\partial_t F^{(i')} \right)^\beta, 
   \left( F^{(i'')} \right)^k    
   \right) \right], && 
\end{eqnarray}
where the symmetrization is performed over all 
possible positions of the differentials and elements in 
the multiple product. 
We consider also a smoothly varying one real parameter $t$ families
 of transversely oriented codimension one foliations on $M$, 
with $F$ depending on $t$. 
The main statement of this paper consists in  
the following Theorem proven in Section \ref{gv} 
and generalizing classical results of \cite{G} on 
codimension one foliation invariants: 
\begin{theorem}
\label{talasa} 
For families of complexes 
$\left\{ C^{k_i}_{m_i}\left(V, \W^{(i)}, \F \right) \right\}$, 
$1 \le i \le n$, 
 the sequence of multiple products \eqref{gendef},  
the coboundary operators \eqref{deltaproduct},   
\eqref{halfdelta}, 
 the transversality condition \eqref{ortho}
 applied to the families of cochain  complexes \eqref{hat-complex}, and   
\eqref{hat-complex-half}, 
and  satisfying the mutual orders condition 
$\ord \left(\delta^{k_{i_s}}_{m_{i_s}} \Phi^{(i_s)}, \Psi^{(i_{s'})}\right)<m+k-1$, 
generate an non-vanishing infinite series of 
cohomology classes of invariants \eqref{pampadur}  
for
$(2-m)k_i -m +1   - \beta k_{i'} - k k_{i''}<0$, 
and 
$(2-m)m_i + m-1 - \beta m_{i'} - k m_{i''} <0$. 
where $\beta=0$, $1$; $k$, $m \ge 0$; 
$k_i$, $k_{i'}$, $k_{i''}$, $m_i$, $m_{i'}$, $m_{i''} \ge 0$. 
The invariants are 
independent on the choice of $F^{(i)}$, $F^{(i')}$, $F^{(i'')}$  
satisfying the transversality conditions \eqref{prontosan}.    
Similar for 
 the families of short complexes \eqref{shorta} 
for an infinite series of pairs $(k_{i_s}, m_{i_s}, )= ((1, i_s), (2, i_s), l)$,
 $((0, i_s),(3, i_s))$, $((1_s), t)$, $i_s=i, i', i''$, $0 \le t \le 2$.  
\end{theorem}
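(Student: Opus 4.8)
The plan is to follow the three-step pattern of the classical Godbillon--Vey construction of \cite{G} --- produce a cocycle, show it is insensitive to the choice of representative, and prove it is not a coboundary --- but now carried out for the sequence of multiple products \eqref{gendef} on the families \eqref{hat-complex}, \eqref{hat-complex-half}. The guiding analogy is that the transversality condition \eqref{ortho} plays the role of the integrability relation $\omega\wedge d\omega=0$, while the mutual orders condition $\ord\left(\delta^{k_{i_s}}_{m_{i_s}}\Phi^{(i_s)},\Psi^{(i_{s'})}\right)<m+k-1$ guarantees that the regularized products of the rational matrix-element functions remain finite, so that all the coboundary identities below can be applied termwise.

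First, I would prove that the symmetrized product appearing in \eqref{pampadur} is closed. The essential tool is a graded Leibniz rule for the coboundary operators \eqref{deltaproduct}, \eqref{halfdelta} acting on a multiple product: $\delta$ distributes over the factors with signs dictated by the symmetrization $Sym$. Applied to $Sym\cdot_{\rho_1,\ldots,\rho_l}\left(\left(\delta^{k_i}_{m_i}F^{(i)}\right)^m,\left(\partial_t F^{(i')}\right)^\beta,\left(F^{(i'')}\right)^k\right)$, every term that differentiates a factor $\delta^{k_i}_{m_i}F^{(i)}$ dies because $\delta$ squares to zero on \eqref{hat-complex}, \eqref{hat-complex-half}; the terms differentiating an $F^{(i'')}$-factor are annihilated by the transversality condition \eqref{ortho}; and the single $\partial_t$-factor is controlled by the fact that the parameter derivative $\partial_t$ commutes with $\delta$, so its coboundary again reduces to transversality-type terms covered by \eqref{ortho}. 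Collecting the signs produced by $Sym$ then shows that the total coboundary vanishes.

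Second, for independence of the representatives $F^{(i)},F^{(i')},F^{(i'')}$ among forms satisfying \eqref{prontosan}, I would examine an admissible infinitesimal variation $F^{(i)}\mapsto F^{(i)}+G^{(i)}$ with $G^{(i)}$ preserving transversality. By the same Leibniz rule the product changes by $\delta$ of a lower multiple product, hence by an exact cochain, so the class \eqref{pampadur} depends only on the foliation $\F$. The $\partial_t$-factor with $\beta=1$ records the variation of the class within the one-parameter family of transversely oriented foliations on $M$, in parallel with the derivative of the Godbillon--Vey class.

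Third --- and this is the main obstacle --- I would establish non-vanishing. The two inequalities $(2-m)k_i-m+1-\beta k_{i'}-kk_{i''}<0$ and $(2-m)m_i+m-1-\beta m_{i'}-km_{i''}<0$ are precisely the bidegree count, in the two gradings indexed by the superscript and subscript of $C^{k_i}_{m_i}$, of the total product: they place its bidegree in the range where the corresponding cohomology of \eqref{hat-complex}, \eqref{hat-complex-half} is non-trivial, so the cocycle cannot admit a primitive. The difficulty is to show this non-triviality rather than mere closedness; here I would use the grading-restriction hypothesis on $V$ together with the order bound $\ord\left(\delta^{k_{i_s}}_{m_{i_s}}\Phi^{(i_s)},\Psi^{(i_{s'})}\right)<m+k-1$ to rule out any preimage under $\delta$, and to show that distinct admissible triples $(k,m,\beta)$ yield genuinely distinct classes, which produces the infinite series. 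The short-complex assertion for \eqref{shorta} then follows by restricting the same argument to the listed bidegree pairs $(k_{i_s},m_{i_s})$ with the truncated coboundary.
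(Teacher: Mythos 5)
Your closedness step follows essentially the paper's route (expand the coboundary of the product termwise and kill every term by transversality, as the paper does via \eqref{prontosan} and \eqref{kabkab}), but your non-vanishing step has a genuine gap. You assert that the inequalities $(2-m)k_i-m+1-\beta k_{i'}-kk_{i''}<0$ and $(2-m)m_i+m-1-\beta m_{i'}-km_{i''}<0$ ``place the bidegree in the range where the corresponding cohomology of \eqref{hat-complex}, \eqref{hat-complex-half} is non-trivial,'' and then defer the actual argument to an unspecified use of grading-restriction to ``rule out any preimage under $\delta$.'' No such non-triviality range is established anywhere in the paper or in your sketch, so this is assuming the conclusion. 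The paper's mechanism is entirely different and concrete: suppose the product \eqref{pampadur} vanishes; then one can factor, via the projection $P^{(i)}_{(i,i,i',i'')}$, the element $\delta^{k_i}_{m_i}\Phi^{(i)}$ as a product of some $\Gamma^{(i)}\in C^n_\mu\left(V,\W^{(i)}\right)$ with the remaining $m-1$ factors $\delta^{k_i}_{m_i}\Phi^{(i)}$, the $\partial_t$-factor and the $k$ factors $\Phi^{(i'')}$. Matching the bidegrees of the two sides forces $k_i+1=n+(m-1)(k_i+1)+\beta k_{i'}+kk_{i''}$ and $m_i-1=\mu+(m-1)(m_i-1)+\beta m_{i'}+km_{i''}$, and the stated inequalities are exactly the condition that $n$ or $\mu$ would have to be negative, i.e.\ that no such space and no such $\Gamma^{(i)}$ exist. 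The inequalities are thus the output of a degree-counting contradiction, not a description of a known non-trivial range of the cohomology; without this bookkeeping your third step does not go through.

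You also misallocate the mutual orders condition $\ord\left(\delta^{k_{i_s}}_{m_{i_s}}\Phi^{(i_s)},\Psi^{(i_{s'})}\right)<m+k-1$. In the paper it is not a tool for excluding primitives; it is what makes the representative-independence step work. There one substitutes $\Phi^{(i)}+\eta^{(i)}$, $\Phi^{(i')}+\eta^{(i')}$, $\Phi^{(i'')}+\eta^{(i'')}$, expands the multiple product by multilinearity into a binomial-type sum with coefficients $C^{j,j'}_{m,k}$, and observes that every cross term containing an $\eta$-factor vanishes outright by the mutual order condition, so the product is unchanged identically, not merely up to an exact cochain. Your infinitesimal-variation argument instead claims the difference is $\delta$ of a ``lower multiple product,'' which you never exhibit and which the Leibniz rule \eqref{leibniz} by itself does not produce; as written this step is also incomplete, though it is closer to repairable than the non-vanishing step.
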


Results of this paper promise to be developed in various directions. 
In particular, papers \cite{Huang,  CE, FF1988} suggest 
several approaches to cohomology formulation and computation for 
vertex algebra related structures. 
The general theory of characteristic classes for arbitrary 
codimension foliations, and, in particular, possible classification 
of foliations leaves remain the most desirable problems 
in the contemporary theory of foliations. 
The algebraic and geometric origin of problems considered in this paper 
hint natural directions to generalize constructions 
associated with vertex algebras and applications. 
In particular, the problem to distinguish 
\cite{BG, BGG} types of compact and non-compact leaves
of foliations, requires a further development of algebraic and analytical 
methods to compute higher order cohomology invariants discussed in this paper. 
In \cite{Losik} the author introduced a foliation theory in terms of frames. 
We would be interested in a development of results of that paper 
with the vertex algebra theory applied to 
 smooth structures on the space of leaves for foliations.
For smooth manifolds, 
a completely intrinsic cohomology theory formulated 
in terms of vertex operator algebra bundles \cite{BZF}
would lead to further applications for classification 
of foliation leaves \cite{BG, BGG}. 
In relation to the classical paper \cite{BS}, 
one would be interested in clarifying 
the idea of auxiliary vertex operator algebra bundles construction 
in order to compute cohomology of foliations.   
In a separate paper we will 
 consider a cohomology theory for vertex operator algebra bundles \cite{BZF}    
defined on  arbitrary codimension foliations on smooth manifolds. 

The plan of the paper is the following. 
Section \ref{transversal} contains a description of the transversal structures  
 for foliations.    
In Subsection \ref{intep} a vertex algebra interpretation for the local geometry 
of foliations is described.  
In Subsection \ref{composable} 
the definition and properties of maps 
adapted transversal to a number of vertex operators are given. 
In Section \ref{product} we introduce 
sequences of multiple products of elements of $\W^{(i)}$-spaces 
and study their properties. 
Subsection \ref{geoma} contains a geometric motivation leading to 
the notion of sequences of multiple products. 
In Subsection \ref{kapusta} the elimination of coinciding  
vertex algebra elements and corresponding formal parameters is described. 
Subsection \ref{nahuy} constructs the adaptation operation 
for special type of matrix elements leading to rational functions. 
The definition of 
the sequence of multiple products of elements of spaces of differential forms
is introduced. 
In Subsection \ref{glavna} 
 we prove that the sequence of multiple products map to the tensor product 
$\W^{(1, \ldots, l)}$-space.   
In Subsection \ref{symprop} 
we prove that 
 a sequence of multiple products
satisfies a symmetry property \eqref{shushu}. 
In Subsection \ref{exderconj}
it is shown that sequences of multiple products satisfy 
 $L_V(-1)$-derivative and $L_V(0)$-conjugation properties.  
In Subsection \ref{properties1}   
invariance of sequences of multiple products 
under the action of the group of independent transformations 
of coordinates is proven. 
The spaces for families of 
chain complex associated to a vertex algebra on a foliation 
are introduced in 
 Section \ref{spaces}. 
In Subsection \ref{noprop} 
properties of spaces for vertex algebra complexes are studied. 
Subsection \ref{cobcob} introduces the coboundary operators for
 the families complexes in 
our formulation.     
Sequences of multiple products for families of complexes 
 are defined in Section \ref{productc}.
In Subsection \ref{adapta} the geometric interpretation  
 of multiple products for a foliation
is discussed.
The properties of the product are studied in Section \ref{pyhva}.   
In Subsection \ref{toto} an analogue of Leibniz rule is proven for 
sequences of multiple products for spaces of complexes.  
Section \ref{gv} contains the proof of Theorem \ref{talasa}, 
 the main result of this paper. 
Explicit formulas for multiple products 
 cohomology invariants for a codimension one foliation  
on a smooth complex curve are found. 
In Subsection{cohomological} the notions related to a vertex operator algebra cohomology 
are introduced. 
Subsection \ref{multpartprod} defines the transversality conditions for 
multiple products. 
Subsection \ref{comult} introduces the series of multiple parametric 
commutator products for elements 
of the families of cochain complex spaces. 
Finally, Subsection \ref{proofthe} contains the proof of Theorem \ref{talasa}. 
In the Appendix we provide the material required 
for the construction of the vertex algebra 
cohomology of foliations. 
Properties of matrix elements for spaces $\W^{(i)}$ are listed. 
\section{Transversal structures for a foliation}
\label{transversal}  
We refer to \cite{CM} for the definitions and properties of 
a basis of transversal sections for 
foliations and corresponding holonomy of a foliation.  
In \cite{zucmp} the notion of a 
holomorphic multi-point connections on a smooth complex variety was introduced. 
The factor space $H^n= {\mathcal Con}^n_{cl }/G^{n-1}$ 
 of closed multi-point connections 
with respect to the space of connection forms 
determines the cohomology.  
A construction of a vertex algebra cohomology of foliations 
in terms of connections 
related to \cite{BS}
will be given in a separate paper. 
The formulation of a vertex algebra cohomology of a foliation 
given in the Section 
\ref{productc} 
is partially motivated by the construction of the 
 ${\rm \check C}$ech-de~Rham cohomology \cite{CM}. 

 Let us we provide several definitions and properties from \cite{Huang}.
\label{shuffles}
For the permutation group $S_q$, the elements of 
$J_{l, s}=\{\sigma\in S_l\;|\;\sigma(1)<\ldots <\sigma(s), \;   
 \sigma(s+1)<\ldots <\sigma(l)\}$. 
are called shuffles. 
Here $l \in \N$ and $1\le s \le l-1$, let $J_{l; s}$ is the set of elements of 
$S_l$ which preserves the order of the first $s$ and the last  
$l-s$ numbers. 
We denote also  
$J_{l; s}^{-1}=\{\sigma\;|\; \sigma\in J_{l; s}\}$.  
For $n\in \Z_+$, the configuration space is defined by  
$F_n\C=\{(z_1, \ldots, z_n)\in \C^n\;|\; z_i\ne z_j, i\ne j\}$.    
In the Appendix we review 
the notion of a grading-restricted 
vertex algebra $V$, and its grading-restricted generalized $V$-module $W$.   
The algebraic completion 
$\overline{W}=\prod_{n\in \mathbb C}W_{(n)}=(W')^*$.   
of $W$ will be denoted as $\overline{W}$ in what follows. 
We notate by $Rf(z_1, \ldots, z_n)$ 
a rational function   
 if a meromorphic function $f(z_1, \ldots, z_n)$ defined  
  on the configuration space 
$F_{n}\mathbb{C}$ is  
 analytically extendable to a rational function in $(z_1, \ldots, z_n)$.  
For any $w'\in W'$, a map 
 $f: F_n\C 
\to
 \overline{W}$,   
 $(z_1, \ldots, z_n)  
\mapsto 
 f(z_1, \ldots, z_n)$,      
is called a $\overline{W}$-valued rational  
function in $(z_1, \ldots, z_n)$  
with the only possible poles at 
$z_i=z_j$, $i\ne j$,  
the bilinear pairing 
(see 
the Appendix)   
 $\langle w', f(z_1, \ldots, z_n) \rangle$ defined for $W$ 
is a rational function $f(z_1, \ldots, z_n)$ in $(z_1, \ldots, z_n)$   
with the only possible poles  at 
$z_i=z_j$, $i\ne j$.  
We denote by $\widetilde{W}_{z_1, \ldots, z_n}$ 
 the space of $\overline{W}$-valued rational functions. 
Since it does not bring any misunderstanding, 
we will use the same notation $\langle ., . \rangle$ for 
bilinear pairings for different modules of $V$.  
The complex-valued bilinear pairing with 
an element $f$ of the algebraic completion $\overline{W}$  
inserted characterizes 
a $\widetilde{W}_{z_1, \ldots, z_n}$-valued rational function.  

Let ${\rm Aut}(V)$ be a group of automorphisms of $V$ 
with elements 
 $g \in {\rm Aut}(V)$.  
We define that the action of $g$ on the tensor product $V^{\otimes n}$ as 
$g.(v_1 \otimes ... \otimes v_n) = g.v_1 \otimes ... \otimes g.v_n$. 
Then $g$ automatically commutes with the action of $S_n$. 
Let $g$ commute also with $L_V(-1)$ and $L_W(-1)$.    
In \cite{MTZ, TZ1, TZ2} we considered various versions of orbifolding 
$n$-point correlation constructions for vertex operator algebras. 
In paricular, it included presence of an automorphisms group element 
$g\in Aut(V)$ in expressions for correlations functions. E.g., in particular, 
such twisted torus correlation functions had the form 
$Tr_V\left(g Y(v_1, z_1) \ldots Y(v_n, z_n)\right)$. 
Similarly, for a function $\Phi \in \widetilde{W}_{z_1, \ldots, z_n}$ 
 we include of automorphism element 
$\Phi(g; v_1, z_1;  \ldots ; v_n,  z_n)$ 
 acting on elements of the corresponding module $W$. 
As we know from, e.g., \cite{MTZ},  
 that enriches the analytic structure of a vertex operator algebra matrix elements. 
Since matrix elements are then involved in determination of cohomology invariants 
it is also useful to include them in our considerations.  

Now let us define the space of 
$\widetilde{W}_{z_1, \ldots, z_n}$-valued differential forms for 
a quasi-conformal grading-restricted vertex algebra $V$. 
This space is used in the construction of 
 families of cochain complexes  
describing the vertex algebra cohomology 
of foliations on complex curves. 
 The weight $\wt(v)$ of a homogeneous element $v$ of a vertex algebra with respect  
 to Virasoro algebra $L_V(0)$-mode is defined in the Appendix.     
In \cite{BZF} it was proven that, for a primary $v \in V$ element, 
i.e., $L_V(n) v = 0$ for every $n > 1$, 
 a vertex operator $Y(v, z)$ multiplied by 
  $\wt(v)$-power of the corresponding differential $dz^{\wt(v)}$  
is invariant (see the Appendix)    
with respect to the formal parameter $z$ changes  
(for a quasi-conformal vertex operator algebra).  
We consider $\widetilde{W}_{z_1, \ldots, z_n}$-valued   
 functions $\Phi$ for primary $v_i \in V$, $1 \le i \le n$, 
and formal parameters $z_i$, 
endowed with $\wt(v_i)$-powers of the corresponding differentials: 
\begin{eqnarray*} 
&& \Phi \left(g; dz_1^{\wt(v_1)} \otimes v_1, z_1; \ldots; 
 dz_n^{\wt (v_n)} \otimes  v_n, z_n\right)
\nn
&& \quad = \Phi \left(g; v_1, z_1; \ldots; 
  v_n, z_n\right) dz_1^{\wt(v_1)} \ldots dz_n^{\wt (v_n)}. 
\end{eqnarray*}
Let us underline that this notation is sometimes 
useful for further considerations. 
In what follows,
we denote these forms as 
$\Phi(g; v_1, z_1; \ldots; v_n, z_n)$
abusing notations. 

For $n\in \Z_+$, $v_i\in V$,  $1 \le i \le n$, and arbitrary $w'\in W$,  
$\Phi(g; v_1, z_1;  \ldots ; v_n,  z_n)$     
 is said to have
the $L_V(-1)$-derivative property if 
\begin{eqnarray}
 &&\langle w', \partial_{z_i} \Phi(g; v_1, z_1;  \ldots ; v_n,  z_n)\rangle=  
\langle w',  \Phi(g; v_1, z_1;  \ldots; L_V(-1)v_i, z_i; \ldots ; v_n,  z_n) \rangle,   
\nn 
\label{lder1}
  &&\sum\limits_{i=1}^n\partial_{z_i} \langle w', 
\Phi(g; v_1, z_1;  \ldots ; v_n,  z_n)\rangle= 
\langle w', L_W(-1).\Phi(g; v_1, z_1;  \ldots ; v_n,  z_n) \rangle. 
\end{eqnarray}  
For $\sigma\in S_n$, and $v_i \in V$, $1 \le i \le n$, 
\begin{equation}
\label{sigmaction}
\sigma(\Phi)(g; v_1, z_1; \ldots; v_n, z_n)  
=\Phi(g; v_{\sigma(1)}, v_{\sigma(1)};  \ldots; v_{\sigma(n)}, z_{\sigma(n)}), 
\end{equation}   
defines the action of the symmetric group $S_n$.   
The permutation given by $\sigma_{i_1, \ldots, i_n}(j)=i_j$,
 will be notated as $\sigma_{i_1, \ldots, i_n}\in S_n$ 
for $1 \le j \le n$. 
For $v_j \in V$, $1 \le j \le n$,  
$w'\in W'$, $(z_1, \ldots, z_n)\in F_n\C$ and $z\in \C^\times$, 
$(zz_1, \ldots, zz_n)\in F_n\C$,   
$\Phi$ 
 satisfies the $L_V(0)$-conjugation property if 
\begin{eqnarray}
\label{loconj}
\langle w', z^{L_W(0)}    
\Phi \left(g; v_1, z_1; \ldots; v_n, z_n \right) \rangle  
=\langle w', 
 \Phi(g; z^{L_V(0)} v_1, zz_1;  \ldots ;  z^{L_V(0)} v_n,  zz_n)\rangle. 
\end{eqnarray} 

From considerations of \cite{BZF}  it follows 
\begin{proposition}
\label{pupa0}
For primary  elements $v_j \in V$, $1 \le j \le n$,  
  of a quasi-conformal grading-restricted vertex algebra $V$,  
$\Phi(g; v_1, z_1; \ldots; v_n, z_n)$
 is canonical with respect to the action of the group  
$\left( {\rm Aut} \; \Oo\right)^{\times n}_{z_1, \ldots, z_n}$    
of independent $n$-dimensional changes  
\begin{eqnarray}
\label{zwrho}
&&(z_1, \ldots, z_n)   
\mapsto 
(\widetilde{z}_1, \ldots, \widetilde{z}_n)  
= 
(\varrho(z_1), \ldots, \varrho(z_n)). 
\end{eqnarray}    
\end{proposition}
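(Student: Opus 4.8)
\emph{Proof proposal.} The plan is to reduce the statement to the standard fact, established in \cite{BZF} (Chapter 6), that a quasi-conformal grading-restricted vertex algebra carries a canonical action of the group $\Aut\Oo$ of changes of formal coordinate, and that matrix elements of vertex operators, once tensored with the appropriate powers of differentials $dz_i^{\wt(v_i)}$, transform covariantly under this action. First I would recall from Appendix \ref{grading} the quasi-conformal structure: the operators $L_V(n)$, $n \ge 0$, generate an action of the Lie algebra $\mathrm{Der}_+\Oo$ of derivations of the formal disk fixing the origin, with $L_V(0)$ semisimple (defining the weight grading) and $L_V(n)$, $n \ge 1$, acting locally nilpotently. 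Grading-restrictedness guarantees that the associated exponentials, of the schematic form $R(\varrho) = a_1^{-L_V(0)}\exp\bigl(-\sum_{n\ge 1}\beta_n L_V(n)\bigr)$ for $\varrho(z) = a_1 z + a_2 z^2 + \cdots$ with $a_1 \ne 0$ and $\beta_n$ determined by $\varrho$, are well-defined operators on $\overline{W}$ and assemble into a representation of $\Aut\Oo$; this is exactly the data needed to form the vertex operator algebra bundle of \cite{BZF}.

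Because the transformation group in \eqref{zwrho} is a product $(\Aut\Oo)^{\times n}$ acting independently on the $n$ formal parameters, the verification factorizes, and it suffices to check canonicity one variable at a time, say in $z_i$ with the remaining $z_j$, $v_j$ held fixed. For that single slot I would decompose $\varrho$ into its scaling part and its unipotent part. The scaling $z_i \mapsto a_1 z_i$ is controlled by the $L_V(0)$-conjugation property of Definition \ref{pompo}: inserting $z^{L_W(0)}$ on the left is equivalent to replacing $v_i$ by $a_1^{L_V(0)} v_i$ and rescaling $z_i$, and for a homogeneous (generic) $v_i$ this produces exactly the scaling factor matched by $d\widetilde{z}_i^{\wt(v_i)} = (\varrho'(z_i))^{\wt(v_i)}\, dz_i^{\wt(v_i)}$. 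The unipotent part, generated by $L_V(n)$, $n \ge 1$, acts through the quasi-conformal exponentials; together with the $L_V(-1)$-derivative property (Proposition \ref{n-comm}, which relates the coordinate centred at $z_i$ to the ambient one and thereby governs the dependence of $\Phi$ on $z_i$) it reproduces the action of $R(\varrho)$ on $v_i$ while the higher-order terms of the differential cancel the higher Virasoro contributions term by term. Matching these two computations is precisely the content of the covariance statement in \cite{BZF} (Chapter 6), so $\Phi$ transforms by $v_i \mapsto R(\varrho)\, v_i$, i.e. it is canonical.

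The main obstacle is the analytic one: I must ensure that the operators $R(\varrho)$ act well-definedly on the algebraic completion $\overline{W}$ and that the resulting rearrangements of infinite series converge. This is where grading-restriction is essential, since local nilpotence of $L_V(n)$, $n \ge 1$, on each weight space truncates the exponential of the unipotent part to a finite sum on any fixed graded piece, while the absolute convergence established in Proposition \ref{n-comm} (inside the polydisk $|z| < \min_{i \ne j}|z_i - z_j|$) guarantees that the rearranged power series in the parameters of $\varrho$ converge to the claimed expression. The genericity assumption on the $v_j$ serves to make each $\wt(v_j)$ well-defined, i.e. to take the $v_j$ homogeneous, so that the differential factors $dz_j^{\wt(v_j)}$ and the covariance formula are unambiguous; the general case then follows by linearity over the weight decomposition.
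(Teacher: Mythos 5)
Your proposal is correct and follows essentially the same route as the paper: the paper offers no independent argument for Proposition \ref{pupa0}, deriving it directly from the considerations of \cite{BZF} (Chapter 6), and your proof is precisely a detailed reconstruction of that machinery — the representation $R(\varrho)$ of ${\rm Aut}\,\Oo$ built from the quasi-conformal data, factorization over the $n$ independent slots, the $L_V(0)$-conjugation property for the scaling part, and the weight-power differentials $dz_i^{\wt(v_i)}$ absorbing the Jacobian factors. The only interpretive choice you make (reading ``generic'' as homogeneous so that $\wt(v_j)$ and hence the covariance $v_i \mapsto R(\varrho)v_i$ are unambiguous) is consistent with the paper's own usage.
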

\label{wspace}
We define the space $\W_{z_1, \ldots, z_n}$ of forms 
$\Phi$ $\left(g \right.$;$dz_1^{\wt(v_1)}$ $\otimes$ $v_1$, $z_1$; $\ldots$;  
 $dz_n^{\wt (v_n)}$ $\otimes$  $v_n$, $\left. z_n\right)$ 
satisfying $L_V(-1)$-derivative \eqref{lder1}, $L_V(0)$-conjugation 
\eqref{loconj} properties, and the symmetry property with 
 respect to the action of the symmetric group $S_n$:  
\begin{equation}
\label{shushu} 
\sum_{\sigma\in J_{l; s}^{-1}}(-1)^{|\sigma|}
 \Phi(g; v_{\sigma(1)}, z_{\sigma(1)}; 
\ldots; v_{\sigma(n)},  z_{\sigma(n)})=0. 
\end{equation} 
\subsection{Geometric setup for a foliation in terms of a vertex algebra} 
\label{intep}
Let us first recall \cite{CM} the notion of a basis of transversal sections for foliations.  
 Let $\mathcal M$ be a complex curve endowed 
 with a foliation $\F$ of codimension one.   
A transversal section of a foliation $\F$ is an embedded 
 one-dimensional 
 submanifold $U\subset M$ which  
is everywhere transverse to the leaves of $\F$.    
If $\alpha$ is a path between $p_1$ and $p_2$ on the same leaf of $\F$,  
and 
$U_1$ and $U_2$ are transversal sections through $p_1$ and $p_2$, 
 then $\alpha$ defines a transport 
 along the leaves from a neighborhood of $p_1$ in $U_1$ to a neighborhood of $p_2$ in $U_2$.  
 That gives a germ of a diffeomorphism   
$hol (\alpha): (U_1, p_1)\hookrightarrow  (U_2, p_2)$,  
which is called the holonomy of the path $\alpha$. 
Two homotopic paths always define the same holonomy. 
If the above transport along $\alpha$ is defined in all of $U_1$ and embeds $U_1$ into $U_2$, this embedding
$h: U_1\hookrightarrow U_2$,  
is called the holonomy embedding.    
A composition of paths induces a composition of holonomy embeddings.  
Transversal sections $U$ through $p$ as above should be thought of 
as neighborhoods of the leaf through $p$ in
the space of leaves. 
A transversal basis for the space of leaves $\mathcal M/\F$ of a foliation $\F$ is a  
family $\U$ of transversal sections $U \subset \mathcal M$ 
with the following property. 
If $U_p$ is any transversal section through a given $p\in \mathcal M$, 
then there exists a 
holonomy embedding  
$h: U\hookrightarrow U_p$,   
 with $U\in \U$ and $p\in h(U)$. 
A transversal section is a one-dimensional disk given by a chart of $\F$.   
Accordingly, we can construct 
a transversal basis $\U$ out of a basis 
$\widetilde{\U}$ of $\mathcal M$ by domains of foliation charts 
 $\phi_U: \widetilde{U}\tilde{\hookrightarrow } \mathbb{R} \times U$,    
$\widetilde{U}\in \widetilde{\U}$,
 with $U=\mathbb{R}$. 

We consider a $(n,k)$-set of points, 
$n \ge 1$, $k \ge 1$,    
$\left(p_1, \ldots, p_n; p'_1, \ldots, p'_k \right)$,   
on a smooth complex curve $M$.  
Let us denote the set of the corresponding local coordinates by  
$\left( c_1(p_1), \ldots, c_n(p_n); c'_1(p'_1), \ldots, c'_k(p'_k) \right)$.   
For a grading-restricted vertex algebra $V$, 
 we consider a set $\left\{W^{(l)}, l\ge 1 \right\}$ 
of its grading-restricted generalized modules.   

For the first $n$ grading-restricted vertex algebra $V$ elements of 
\begin{equation}
\label{vectors}
\left(v_1, \ldots, v_n; v'_1, \ldots, v'_k\right),   
\end{equation}
we consider the linear maps 
\begin{equation} 
\label{maps}
\Phi: V^{\otimes n} \rightarrow \W_{c_1(p_1), \ldots, c_n(p_n)},   
\end{equation}
\begin{eqnarray}
\label{elementw}
&&\Phi \left(g; dc_1(p_1)^{\wt (v_1)} \otimes v_1, c_1(p_1); \ldots; dc_n(p_n)^{\wt (v_n)} 
\otimes v_n,  c_n(p_n) \right)
\nn
&& \quad =    
\Phi \left(g;  v_1, c_1(p_1); \ldots; v_n,  c_n(p_n) \right) 
dc_1(p_1)^{\wt (v_1)} \ldots dc_n(p_n)^{\wt (v_n)}. 
\end{eqnarray}
In our setup, 
we identify formal parameters   
$(z_1, \ldots , z_n)$ of $\W_{z_1, \ldots , z_n}$, 
with local coordinates $(c_1(p_1), \ldots, c_n(p_n))$ 
 around points $p_i$, $0 \le i \le n$, on $M$.  
%
%
In \cite{zucmp} we proved, that  
 for arbitrary sets of vertex algebra elements 
$v_i$, $v'_j \in V$, $1 \le i \le n$, $1 \le j \le k$,  
 arbitrary sets of points $p_i$
 endowed with local coordinates $c_i(p_i)$ on $M$, 
 and arbitrary sets of points 
  $p'_j$ endowed with local coordinates $c'_j(p'_j)$ 
on the transversal sections $U_j\in \U$ of $M/\F$, 
the element \eqref{elementw} 
 as well as the vertex operators
 \begin{eqnarray}
\omega_W\left(dc'_j({p'_j})^{\wt(v'_j)} \otimes v'_j,  c'_j({p'_j})\right)
&=& Y_W\left( d(c'_j(p'_j))^{\wt(v'_j)} \otimes v'_j,  c'_j({p'_j})\right) 
\nn
&=& Y_W\left( v'_j,  c'_j({p'_j})\right) \; d(c'_j(p'_j))^{\wt(v'_j)}
\label{poper} 
\end{eqnarray} 
 are invariant under the action of the group of independent transformations of coordinates.  
We have already commented in the beginning of this Section on notations
 of differential forms of $\W_{z_1, \ldots, z_n}$.  
 The form \eqref{poper} represents   
a convenient way to notate ordinary vertex operators
 multiplied by $\wt(v_i)$-powers 
of the corresponding differential $\left(dz_i\right)^{\wt(v_i)}$.  

In the construction of spaces for families of cochain complexes  
associated to a grading-restricted vertex algebra   
we consider sections $U_j$, $j \ge 0$ of a transversal basis $\U$ of $\F$,  
and mappings $\Phi$ that belong to the space $\W_{c(p_1), \ldots, c(p_n)}$
for local coordinates $(c(p_1), \ldots, c(p_n))$ on $M$   
at intersections $(p_1, \ldots, p_n)$ of $U_j$ with leaves of $M/\F$ of $\F$. 
Consider a collection of $k$ transversal sections $U_j$, $1 \le j \le k$ of $\U$.    
In order to define the vertex algebra cohomology of $M/\F$,   
we assume that  
mappings $\Phi$ are adapted transversal to $k$ vertex operators.   
We choose one point $p'_j$ with a local coordinate $c'_j(p'_j)$ 
on each transversal section $U_j$, $1 \le j \le k$. 
Let us assume that $\Phi$ is adapted transversal to $k$ vertex operators. 
We denote by $c'_j(p'_j)$, $1 \le j \le k$ 
the formal parameters of $k$ vertex operators
adapted transversal to a map $\Phi$. 
The notion of a adapted transversal map $\Phi$
 to a number of vertex operators
consists of two conditions on $\Phi$.
 The adapted transversal conditions require 
the existence of positive 
integers $N^n_m(v_i, v_j)$, 
depending on vertex algebra elements $v_i$ and $v_j$ only,  
restricting orders of poles 
for the corresponding sums \eqref{Inm}. 
\subsection{The adaptation of transversal operators}
\label{composable}
In the construction of the families of 
 cochain complexes 
 we will use linear maps from tensor powers of $V$ to the space 
$\W_{z_1, \ldots, z_n}$.  
 For that purpose, in particular, 
 to define a family of 
 coboundary operators, 
 we have to adapt compositions of the vertex operator 
transversal structure of cochains 
 associated to a transversal basis for a foliation, 
with vertex operators.
To make the adaptation mentioned above 
one considers \cite{H2} series obtained by projecting 
 elements of a $V$-module algebraic completion 
to their homogeneous components. 
Recall definitions and notations of the Appendix. 
For a generalized grading-restricted $V$-module  
$W=\coprod_{n\in \C} W_{(n)}$,   
 and $q\in \C$, 
let
$P_q: \overline{W}\to W_{(q)}$,   
 be the projection from 
$\overline{W}$ to $W_{(q)}$.  
 Let $v_t \in V$, $m\in \N$, $1 \le t \le m+n$, $w'\in W'$, and 
 $l_1, \ldots, l_n\in \Z_+$ be such that $l_1+\ldots +l_n=m+n$.   
 Define  
$\Xi_s 
= 
E^{(l_s)}_V(v_{k_1}, z_{k_1}- \varsigma_s;  
v_{k_s}, z_{k_s}- \varsigma_s 
 ; \one_V)$,       
where 
 $k_1=l_1+\ldots +l_{s-1}+1$, 
 $\ldots$, 
$k_i=l_1+\ldots +l_{s-1}+l_s$,    
for $s=1, \ldots, n$.  

For a linear map $\Phi: V^{\otimes n} \to \W_{z_1, \ldots, z_n}$, 
the adapted transversal to $m$ vertex operators 
for $v_{1+m}, \ldots, v_{n+m} \in V$, 
is given by the adaptation procedure $R$ that takes 
an analytic extension of the matrix elements 
\begin{eqnarray}
\label{Inm}
\mathcal R^{1, n}_m(\Phi)=
R \sum_{r_1, \ldots, r_n\in \Z}\langle w',  
\Phi\left(g; P_{r_1} \Xi_1; \varsigma_1; 
 \ldots; 
P_{r_n} \Xi_n, \varsigma_n\right)  
\rangle, 
\end{eqnarray} 
\begin{eqnarray*} 
\mathcal R^{2, n}_m(\Phi)=  
R \sum_{q\in \C}\langle w', E^{(m)}_W \left(v_1, z_1; \ldots; v_m, z_m;   
P_q( \Phi(g; v_{1+m}, z_{1+m}; \ldots v_{n+m}, z_{n+m} \right)\rangle 
\end{eqnarray*}
to the rational functions 
in $z_1, \ldots, z_{m+n} \in \C$, 
 independent of $\varsigma_1, \ldots, \varsigma_n \in \C$,  
 absolutely convergent on the domains 
\begin{eqnarray}
\label{granizy1}
|z_{l_1+\ldots +l_{i-1}+p}-\varsigma_i| 
+ |z_{l_1+\ldots +l_{j-1}+q}-\varsigma_i|< |\varsigma_i 
-\varsigma_j|, 
\nn
 1 \le i\ne j \le k, \quad  p=1, 
\ldots,  l_i, \quad 
  q=1, \ldots, l_j,  
\end{eqnarray} 
\begin{eqnarray}
\label{granizy2}
z_{i'}\ne z_{j'}, \quad {i'}\ne {j'}, \quad  
|z_{i'}|>|z_k|>0, \; i'=1, \ldots, m; \;  k=m+1, \ldots, m+n, 
\end{eqnarray}
correspondingly,  
with the pole singular points restricted to 
$z_i=z_j$, of order less than or equal existing 
$N^n_m(v_i, v_j)\in \Z_+$,   
depending only on $v_i$ and $v_j$.  
\section{Sequences of multiple products}    
\label{product}
Let $\left\{W^{(i)}, 1 \le i \le l \right\}$ 
be a set of grading-restricted generalized $V$-modules.  
 In this Section we introduce  
 the sequences of products of elements for  
 a few $\W^{(i)}_{x_{1, i}, \ldots, x_{k_i, i}}$-spaces,     
and study their properties.  
A sequence of multiple products 
defines an element of the tensor product of several 
$\W$-spaces characterized by a converging adapted rational function
resulting from the product of matrix elements of the corresponding $V$-modules. 
\subsection{The geometric motivation for multiple  
products of $\W$-spaces}  
\label{geoma}
By using geometric ideas,  
we will introduce sequences of multiple products for elements of  
$\W^{(i)}_{x_{1, i}, \ldots, x_{k_i, i}}$-spaces 
though their algebraic structure is quite complicated.
 Let us associate a certain model space to each of 
$\W^{(i)}_{x_{1, i}, \ldots, x_{k_i, i}}$-spaces. 
Then a geometric model for a sequence of products should be defined,
and a sequence of algebraic products of 
$\W^{(i)}_{x_{1, i}, \ldots, x_{k_i, i}}$-spaces should be introduced. 
%
For a (not necessary finite) set of 
 $\W^{(i)}_{x_{1, i}, \ldots, x_{k_i, i}}$-spaces, 
$1 \le i \le l$, 
 $k_l \ge 0$,   
 we first associate formal complex parameters  
in sets 
$(x_{1, i}, \ldots, x_{k_i, i})$ 
to parameters of $i$ auxiliary   
spaces.  
The formal parameters of the algebraic product of $l$ spaces  
$\W^{(l)}_{z_1, \ldots, z_{k_1+ \ldots + k_l} }$, 
should be then identified with  
parameters of resulting model space.  
 We take the Riemann sphere $\Sigma^{(0)}$ as our  
 initial auxiliary geometric model space to form  
a sequence of multiple products of spaces of differential forms $\W^{(l)}_{}$  
constructed from matrix elements (see Subsection \ref{nahuy}). 
 The resulting auxiliary/model 
space is formed by 
 a Riemann surface $\Sigma^{(l)}$ of genus $l$ 
obtained by the multiple $\rho_i$-sewing procedures of attaching $l$ handles  
to the initial Riemann sphere $\Sigma^{(0)}$   
where $\rho_i$ are complex parameters, $1 \le i \le l$.  
The local coordinates of $k_1 + \ldots + k_l$ points 
on the Riemann surface $\Sigma^{(l)}$ 
are identified with 
the formal parameters $(x_{1, 1}, \ldots, x_{k_l, l})$, $l \ge 1$.  

We now recall the $\rho$-sewing  
 construction \cite{Y} of a Riemann surface $\Sigma^{(g+1)}$ 
 formed by self-sewing a handle 
to a Riemann surface $\Sigma^{(g)}$ of genus $g$.  
Consider a Riemann surface $\Sigma^{(g)}$  
of genus $g$,
 and let $\zeta_1$, $\zeta_2$ 
be local coordinates in the neighborhood 
of two separated points $p_1$ and $p_2$
on $\Sigma^{(g)}$. 
 For $r_a>0$, $a=1$, $2$, 
 consider two disks  
$\left\vert \zeta_a\right\vert \leq r_a$. 
To ensure that the disks do not intersect 
the radia $r_1$, $r_2$ must be sufficiently small.  
Introduce a complex parameter $\rho$ 
where $|\rho |\leq r_1r_2$, 
 and excise the disks
\begin{equation}
\label{disk}
\{\zeta_a:\, \left\vert \zeta_a\right\vert <|\rho |r_{\bar{a}}^{-1}\}
\subset \Sigma^{(g)},  
\end{equation} 
to form a twice-punctured surface  
$\widehat{\Sigma}^{(g)}=\Sigma^{(g)} \backslash 
\bigcup_{a=1,2}\{z_a :\, \left\vert \zeta_a \right\vert <|\rho |r_{\bar{a}}^{-1}\}$.   
We use the notation  
$\bar 1=2$, $\bar 2=1$.
The annular regions 
$\mathcal{A}_a\subset \widehat{\Sigma}^{(g)}$ 
are defined though the relation 
\begin{equation}
\label{zhopki}
\mathcal{A}_a=\{\zeta_a:\, |{\rho }|r_{\bar{a}}^{-1}
\leq \left\vert \zeta_a \right\vert \leq r_a\}, 
\end{equation}  
and identify them as a single region 
$\mathcal{A}=\mathcal{A}_1 \simeq \mathcal{A}_2$ 
 via the sewing relation 
\begin{equation}
\zeta_1 \zeta_2=\rho, 
 \label{rhosew}
\end{equation}
to form a compact Riemann surface  
$\Sigma^{(g+1)}=\widehat{\Sigma}^{(g)}\backslash \{\mathcal{A}_1\cup \mathcal{A}_2\}
\cup \mathcal{A}$,   
of genus $g+1$.
The multiple sewing procedure repeats the above construction several times
with complex sewing parameters $\rho_i$, $1 \le i \le l$.  
Thus, starting from the Riemann sphere it forms a genus $l$ Riemann surface. 
As a parameterization of a cylinder 
connecting the punctured Riemann surface to itself 
we can consider 
 the sewing relation \eqref{rhosew}.  
%
When we identify the annuluses \eqref{zhopki}
in the $\rho$-sewing procedure, 
certain $r$ points among points 
$(p_1, \ldots, p_{k_1+\ldots+k_l})$  
may coincide.    
This corresponds to the singular case of coincidence of $r$ formal parameters. 
\subsection{The elimination of coinciding parameters in multiple products}  
\label{kapusta}
Let us now give a formal algebraic definition of the sequence of products of 
$\W^{(i)}_{x_{1, i}, \ldots, x_{k_i,i}}$-spaces.  
Let $f_i$ and $g_i$ be elements of the automorphism groups 
of $V'$ (the dual space to $V$ with respect 
the bilinear pairing $\langle.,.\rangle_\lambda$, 
(cf. the Appendix)    
and generalized grading-restricted $V$-modules  
$W^{(i)}$, $1 \le i \le l$ correspondingly.
It is assumed that on each of $W^{(i)}$ 
there exist a non-degenerate bilinear pairing $\langle.,.\rangle$.  
Note that we do not consider twisted modules \cite{DL}.   
It will be dealt in a separate paper.  

Note that 
according to our assumption, 
$(x_{1, i}, \ldots, x_{k_i, i}) \in F_{k_l l}\C$, $1 \le i \le l$, i.e.,    
belong to the 
corresponding configuration space.  
 As it follows from the definition of $F_n\C$, 
 any coincidence of 
formal parameters should be excluded from the set of parameters for a product of 
$\W^{(i)}_{x_{1, i}, \ldots, x_{k_i,i}}$-spaces. 
In general, it may happend that some formal parameters 
of $(x_{1, 1}, \ldots, x_{k_1, 1}, \ldots, x_{1, l}, \ldots,  x_{k_l, l})$, $l \ge 1$,   
coincide. 
 In  the definition 
of the products below     
we keep only one of several coinciding formal parameters.  
%
Suppose in \eqref{parasha}
we have $k$ groups of coinciding formal parameters,  
$x_{j_{1, q}, i_1}= x_{j_{2, q}, i_2}= \ldots=x_{j_{s_q, q}, i_{s_q}}$, $1 \le q \le k$
$1 \le i_1 < i_2 < \ldots < i_{s_q} \le l$.    
Here $s_q$ denotes the number of coinciding parameters in $q$-th group.  
Introduce the operation $\; \widehat{}\; $ of exclusion 
of all 
$(x_{j_{2,q}, i_2}, \ldots, x_{j_{s_q, q}, i_{s_q}})$, $1 \le q \le k$,   
except of the first ones $x_{j_{1,q}, i_1}$ in each of $k$  
groups of coinciding formal parameters 
of the right hand side of \eqref{parasha}.   
Let us denote $\theta_i=k_1+\ldots+k_i$, and 
$r_i$, $1 \le i \le l$, 
the number of excluded formal parameters in \eqref{parasha}, 
and by $r=\sum_{i=1}^l r_i$ the total number of omitted parameters.  
In the whole body of the paper, we will denote by 
$(v_1, z_1; \ldots; v_{\theta_l-r}, z_{\theta_l-r})$    
the set of vertex algebra elements 
and formal parameters which excludes coinciding ones, i.e., 
\begin{eqnarray*}
 (v_1, z_1; \ldots; v_{\theta_i-r_i}, z_{\theta_i-r_i})   
\qquad \qquad \qquad \qquad \qquad \qquad \qquad \qquad \qquad \qquad \qquad \qquad &&
\nn
  \qquad =(v_{1, 1}, x_{1, i}; \ldots; 
v_{j_{1, 1}, i_1}, x_{j_{1, 1}, i_1}; 
\ldots; \widehat{v}_{j_{2, 1}, i_2},  \widehat{x}_{j_{2, 1}, i_2},  
\ldots; 
 v_{j_{s_1, 1}, i_{s_1}}, x_{j_{s_1, 1}, i_{s_1}}; \qquad \qquad && 
\end{eqnarray*}
\begin{eqnarray} 
\label{parasha}
 \qquad v_{j_{1, k}, i_1}, x_{j_{1, k}, i_1}; 
\ldots; \widehat{v}_{j_{2, k}, i_2},  \widehat{x}_{j_{2, k}, i_2},  
\ldots;  
 v_{j_{s_k, k}, i_{s_k}}, x_{j_{s_k, k}, i_{s_k}};
 v_{k_l, l}, x_{k_l, l}).   &&
\end{eqnarray}
We will require that the set of all formal parameters 
 $(z_1, \ldots, z_{\theta_l-r})$   
would belong to $F_{\theta_l-r}\C$. 
Let us introduce the new enumeration 
of elements of $v_j$ and $z_j$, $1 \le j \le \theta_l-r$. 
 Put $k_0=1$, $r_0=0$,  then set
 $n_i=\sum\limits_{s=0}^{i-1}(k_s-r_{s-1})$, $1 \le i \le l$. 
Recall the notion of an intertwining operator \eqref{wprop} $Y^W_{WV'}(w, z)$, 
 for $w\in W$, $z\in \C$ given in the Appendix.  
We use elimination of coinciding formal parameters in order to satisfy 
    the conditions for resulting configuration spaces when we multiply elements 
    that belong to subspaces of the complex we construct. We drop corresponding 
    vertex operator algebra elements simultaneously. 
    Since the whole picture of cohomology introduced through maps $\Phi$ depending 
    on elements of our vertex algebra $V$ (composibility conditions apply further 
    restrictions both on vertex algebra elements and formal parameters), the resulting 
    cohomology has already restrictions on choices of elements of $V$. As a result,             eliminations of vertex algebra elements corresponding to coinsiding formal parameters 
    does not drop information much. But we have to take that into account to avoid 
    overcounting and satisfy conditions of configuration spaces. 
\subsection{The adaptation of multiple product sequences} 
\label{nahuy}
In order to define appropriately a sequence of multiple products,
we have to introduce the operation of adaptation which we 
denote by $\mathcal R$.  
 A product (see the formula \eqref{tupoo} for the product) of individual matrix elements for  
$\Phi^{(i)}(g_i;  v_{1, i}, x_{1, i} $; $ \ldots; 
v_{k_i, i}, x_{k_i, i})  \in  \W^{(i)}_{x_{1, i}, \ldots, x_{k_i, i} }$, $1 \le i \le l$,    
results in an $\Phi$ of elements given by a matrix element 
\begin{equation}
\label{def}
\langle w', \Phi\rangle = R\langle w', \Phi\rangle,  
\end{equation} 
 with $w_i'\in W^{(i)}{}'$.  
According to \eqref{tupoo} (see below), 
 $\Phi$ is represented by a series in powers of a complex parameter, 
   we assume that (3.5) converges absolutely (on a certain domain) to a
   singular-valued rational function which we denote by $R \langle w', \Phi\rangle$. 
For elements 
$\Phi^{(i)} = \Phi^{(i)}(g_i;  v_{1, i}, x_{1, i} $; $ \ldots; 
v_{k_i, i}, x_{k_i, i})  \in  \W^{(i)}_{x_{1, i}, \ldots, x_{k_i, i} }$, 
$1 \le i \le l$,
 we assume that \eqref{def}
converges absolutely (on a certain domain) 
to a singular-valued 
 rational function which we denote by $R\langle w', \Phi\rangle$. 
In what follows, with $1 \le i \le l$, the notation 
$\left( \Phi^{(i)} \right.$ $(g_i$;  $v_{1, i}$, $x_{1, i}$; $\ldots$;    
$v_{k_i, i}$, $x_{k_i, i})$  $\in$  $\left. \W^{(i)}_{x_{1, i}, \ldots, 
 x_{k_i, i} }\right)$ 
will mean the set $(\Phi^{(1)}, \ldots, \Phi^{(l)})$.  
As we will see below, we will use this also to denote the multiples product. 

For an arbitrary element 
$\Phi \in \widehat{W}_{z_1, \ldots, z_n}$  
with the matrix element $\langle w', \Phi w\rangle$, 
let $\mathcal S$ be the operation which 
chooses a single-valued meromorphic branch of $\langle w', \Phi w\rangle$.  
Consider $L$ grading-restricted vertex operator algebra modules $W^{(i)}$, 
$1 \le i \le L$. 

For $1 \le l \le L$, $w_i'\in W^{(i)}{}'$, $u \in V_{(k)}$,  and  
$\Phi^{(i)}(g_i;  v_{1, i}, x_{1, i} $; $ \ldots; 
v_{k_i, i}, x_{k_i, i})  \in  
\W^{(i)}_{x_{1, i}, \ldots, x_{k_i, i} }$, $1 \le i \le l \le L$,   
a sequence of ordered 
$(\rho_1$, $\ldots, \rho_l)$-products, $k \in \Z$,
  is defined by 
 the meromorphic functions   
\begin{eqnarray}
\label{gendef}
 \cdot_{\rho_1, \ldots, \rho_l}  
\; \left(\Phi\left(g_i; v_{1, i}, x_{1, i}; 
\ldots; v_{k_i, i}, x_{k_i, i}\right) \right)_k  
\qquad \qquad \qquad \qquad \qquad \qquad \qquad \qquad  &&
\nn
=
 \widehat{\mathcal S}  
\prod_{i=1}^l  \rho^k_i  
 \langle w_i', Y^{W^{(i)}}_{W^{(i)}V'} \left(   
\Phi^{(i)}(g_i; v_{1, i}, x_{1, i}; \ldots;  
 v_{k_i, i}, x_{k_i, i}; u, \zeta_{1, i} ), 
\zeta_{2, i} \right)\; f_i.\overline{u} \rangle,  &&
\end{eqnarray}
extendable to a rational function 
$\Theta$ $\left(f_1 \right.$, $\ldots$, $f_l$;  
$g_1$, $\ldots$, $g_l$; $v_1$, $z_1$; $\ldots$; 
$v_{\theta_l-r}$, $z_{\theta_l-r}$;  
$\rho_1$, $\ldots$, $\rho_l$;     
$\zeta_{1, 1}$, $\zeta_{2, 1}$; $\ldots$; $\zeta_{1, l}$,  
$\left. \zeta_{2, l} \right)_k$ 
on the domain $F\C_{\sum_{i=1}^l k_i}$. 
In \eqref{gendef}  $Y^{W^{(i)}}_{W^{(i)}V'}$
 is an intertwining operator {interop} defined in the Appendix.    
Note that the order of matrix elements in the sequence of products \eqref{gendef}
is ordered with respect of the sequence of $V$-modules $W^{(i)}$.  
In \eqref{gendef}, $f_i$, $1 \le i \le l$, represents another collection of 
$V$-automorphism group elements. 
Together with automorphisms $g_i$, they constitute the whole set of 
transformations deforming matrix elements for $V$ \cite{MTZ, TZ2}. 
As we mentioned in the remark earlier,  
deformations of matrix elements are useful for cohomology descriptions. 
 The expression \eqref{gendef}
 is parametrized by $\zeta_{1, i}$, $\zeta_{2, i}  \in \C$, 
 related by the sewing relation \eqref{rhosew}. 
Here $k \in \Z$, 
$u \in V_{(k)}$ is an element of any $V_{(k)}$-basis, 
$\overline{u}$ is the dual of $u$
with respect to a non-degenerate bilinear pairing 
 $\langle .\ , . \rangle_\lambda$ over $V$ 
(see 
the Appendix).     
The construction (3.6) is quite similar to the contruction of higher genus correlation 
functions for vertex operator algebras in the Schottky geometric procedure (see, e.g.,  
\cite{T}). In that construction they associate individual matrix element to handles 
attached to Riemmann sphere in order to construct a higher genus Riemann surface. 

Here the operation $\widehat{\mathcal S}$ 
combines the adaptation operation $\mathcal S$
with the elimination of 
coinciding parameters described in Subsection \ref{kapusta}. 
The elements $u$ of a vertex algebra grading subspace $V_{(k)}$,  
their duals $\overline{u}$, as well as 
formal parameters $\zeta_{a, i}$, $a=1$, $2$, $1 \le i \le l$, 
bear implicit nature and can be incorporated into the definition 
of the bilinear pairing (see the Appendix).    
Thus we assume in what follows that the action of the transformation operators 
as well as vertex algebra operators is taken into account in the definition 
of a bilinear pairing. 
For simplicity, for a fixed set $(\rho_1, \ldots, \rho_l)$, 
let us denote the sequence of products 
depending on $l$ elements of $\Phi^{(i)}$, $1 \le i \le l$, 
$\cdot_{(\rho_1, \ldots, \rho_l)}
 (\Phi^{(1)}, \ldots, \Phi^{(l)})$  
as $(\Phi^{(1)}, \ldots, \Phi^{(l)})$.  
 Partial products with the number of parameters different to 
$L$ will be noted explicitly.  
Note that the products \eqref{gendef} are associative and additive 
by construction. 

For a fixed vertex operator algebra $V$ element $u \in V_{(k)}$, 
 the sequence \eqref{gendef} of multiple products
contains a product of matrix elements of intertwiners of $\Phi^{(i)}$
multiplied by the corresponding $k$-power of $\phi_i$.
%
In the simplest case $l=1$ of the product \eqref{gendef} 
defines another element 
$\Psi(v'_1, z_1; \ldots; v'_k, z_k)  
\in \W_{z_1, \ldots, z_k}$, 
$k \in \Z$, 
\begin{eqnarray}
\label{mangal} 
&&  \Theta\left(f_1, \ldots, f_l; 
g_1, \ldots, g_l; v_1, x_1; \ldots; v_k,  x_k; 
\rho;   
\zeta_1, \zeta_2 \right)_k    
\nn
& & \;  =   
 \mathcal S 
 \rho^k  
 \langle w', Y^W_{WV'} \left(     
\Phi \left(g; v_1, x_1;  \ldots;  
v_k, x_k; u, \zeta_1 \right), \zeta_2 \right)\; f_i.\overline{u} \rangle,   
\end{eqnarray}
Let us introduce now 
the adaptation operation $\mathcal R$ 
to recurrently define a sum 
of products for all $k \in \Z$.  
 Starting with the product \eqref{gendef}
 for some particular $k_0 \in \Z$,  
we define, for $k_0\pm 1$ 
\begin{eqnarray}
\label{tupoo}
  && 
 \left(\Phi\left(g_i; v_{1, i}, x_{1, i}; 
\ldots; v_{k_i, i}, x_{k_i, i}\right) \right)_{k_0\pm 1}   
= 
\; \left(\Phi\left(g_i; v_{1, i}, x_{1, i}; 
\ldots; v_{k_i, i}, x_{k_i, i}\right) \right)_{k_0} 
\\
\nonumber
 && \quad  + \widehat{\mathcal R}    
\prod_{i=1}^l  \rho^{k_0\pm 1}_i    
 \langle w_i', Y^{W^{(i)}}_{W^{(i)}V'} \left(   
\Phi^{(i)}(g_i; v_{1, i}, x_{1, i}; \ldots;  
 v_{k_i, i}, x_{k_i, i}; u, \zeta_{1, i} ), 
\zeta_{2, i} \right)\; f_i.\overline{u} \rangle,   
\end{eqnarray}
with $u \in V_{(k_0\pm 1)}$.    
We can then recurrently extend that to both directions for $k \in \Z$. 
Here the adaptation $\mathcal R$ is defined as the following operation. 
Since the product \eqref{gendef} contains intertwining operators 
for the corresponding grading-restricted $V$-modules $W_i$, $1 \le i \le l$,  
the dependence of the corresponding matrix elements 
 contains \cite{DL} 
rational powers of parameters  
of elements 
$\Phi^{(i)}$ $(g_i$; $v_{1, i}$, $x_{1, i}$; $\ldots$;   
 $v_{k_i, i}$, $x_{k_i, i}$; $u$, $\zeta_{1, i} )$. 
%
 Due to the rational power structure 
 it is clear that 
for a fixed $k \in \Z$,    
the action of the adaptation operation 
is it always possible to 
choose a branch of possible multiply-valued form 
$\prod_{i=1}^l$  $\rho^k_i$    
 $\langle w_i'$, $Y^{W^{(i)}}_{W^{(i)}V'}$  
$\left( \Phi^{(i)} \right.$ $(g_i$; $v_{1, i}$, $x_{1, i}$; $\ldots$;   
 $v_{k_i, i}$, $x_{k_i, i}$; $u, \zeta_{1, i} )$,  
$\left. \zeta_{2, i} \right)$\; $f_i.\overline{u} \rangle$, 
such that 
its singularities would be at a minimal distance $\epsilon(k)$, 
(such that $\lim_{k\to \pm \infty}\epsilon(k) \ne 0$), 
from singularities of the same product for $k-1$. 
%
In our particular case of the intertwining operators \cite{DL} in \eqref{gendef}
that means that we choose appropriate values of rational powers of 
the corresponding parameters. 
%
Concerning singularities of the products in \eqref{gendef}
a change of a vertex algebra $V$-element $u \in V_{(k-1)}$
to $u \in V_{(k)}$ results in a change of the rational power 
of the product dependence. 
%
By continuing the process for further $k \in \Z$, 
and applying the adaptation procedure on each step for each $k$, 
we obtain the sequence of multiple products for fixed $l$  
will always give a function with non-accumulating 
singularities with $k \to \pm \infty$. 

As a result or the recurrence procedure,  
we find the multiple product defining a rational function 
 $\left(\Phi^{(i)}\left(g_i; v_{1, i}, x_{1, i}; 
\ldots; v_{k_i, i}, x_{k_i, i}\right) \right)_{[k_1, \ldots, k_n]}$,      
for a set of multiple products \eqref{gendef} 
for several consequent values of $k \in \Z$ limited by the strip 
$[k_1, \ldots, k_n]$.  
We also define the total sequence of products \eqref{gendef} considered 
for all $k\in \Z$, 
\begin{eqnarray} 
\nonumber
 && \left(\Phi\left(g_i; v_{1, i}, x_{1, i};
 \ldots; v_{k_i, i}, x_{k_i, i}\right) \right)  
 \mapsto 
\nn
\nonumber
 &&\Theta \left(f_1, \ldots, f_l; 
g_1, \ldots, g_l; v_1, z_1; \ldots; v_{\theta_l-r}, z_{\theta_l-r}; 
\rho_1, \ldots, \rho_l;    
\zeta_{1, 1}, \zeta_{2, 1}; \ldots; \zeta_{1, l}, \zeta_{2, l} \right) 
\nn
\nonumber 
&& = \widehat{\Theta}\left(f_1, \ldots, f_l;  
g_1, \ldots, g_l; v_{1, 1}, x_{1, 1};  \ldots; 
v_{k_1, 1}, x_{k_1, 1};
 v_{1, l}, x_{1, l}; \ldots; v_{k_l, l}, x_{k_l, l};  
\right. 
\nn
\label{gendefgen} 
&& \qquad \qquad \qquad \qquad  \qquad \qquad \qquad \qquad  \left.
\rho_1, \ldots, \rho_l; \zeta_{1, 1}, \zeta_{2, 1}; 
\ldots; \zeta_{1, l}, \zeta_{2, l}\right).   
\end{eqnarray}
Recurrently continuing the construction of \eqref{tupoo} 
it is clear that \eqref{gendefgen} has meromorphic properties. 

Numerous constructions in conformal field theory \cite{FMS}, in particular, 
by constructions of  partition and correlation functions
\cite{  MT, MTZ, TZ, TZ1, TZ2,  Y} 
on higher genus Riemann surfaces, 
support the definitions \eqref{gendef}, \eqref{gendefgen}
 of the sequences of multiple products. 
The geometric nature of the genus $l$ Riemann surface sewing 
construction as a model for multiple product,
requires intertwining operators in \eqref{gendef}, \eqref{gendefgen}. 
Taking into account properties of 
the corresponding bilinear pairing defined for a vertex operator algebra $V$, 
it is natural \cite{TZ1} to associate a $V$-basis  
$\left\{u \in V_{(k)}\right\}$ and 
 complex parameters 
 $\zeta_{a, i}$, $a=1$, $2$, $1\le i \le l$, 
with the attachment of a handle to a Riemann surface. 
The attachment of   
a twisted handle to the Riemann sphere $\Sigma^{(0)}$
to form a torus $\Sigma^{(1)}$ \cite{TZ}, 
corresponds to the construction of simplest one $\rho$-parameter
product of $\W$-spaces described in Subsection \ref{geoma}, 
\eqref{mangal} in the geometric model.
The element \eqref{mangal} defines an automorphism of $\W_{z_1, \ldots, z_k}$. 
The geometric description and a reparametrization    
of the original Riemann sphere is obtained via 
the shrinking the parameter $\rho$. 

With some $\varphi$, $\kappa \in \C$ 
 related \cite{TZ} to twistings of 
attached handles in the $\rho$-sewing procedure, 
it is convenient to parametrize the automorphism group elements as  
$g_i=e^{2\pi i \varphi}$,  $f_i=e^{2\pi i \kappa}$. 
An example of the bilinear pairing $\langle .,. \rangle$ 
can be given by \eqref{def} (see also \cite{LiH}). 
The type of a vertex operator algebra $V$ 
determines 
the nature of the $V$ automorphisms group 
(see, e.g., \cite{MTZ}). 
By means of the redefinition of the bilinear pairing $\langle .,.\rangle$,     
in particular via the sewing relations \eqref{rhosew}, 
it is possible to relate ( e.g., \cite{TZ1, TZ2})   
 the sewing parameters 
$(\rho_1, \ldots, \rho_l)$ to parameters 
$\zeta_{1, i}$, $\zeta_{2, i}  \in \C$, $1 \le i \le l$.    
We will omit the $\zeta_{1, i}$, $\zeta_{2, i}$ from notations in what follows 
due to this reason. 

The construction of correlation functions  
for vertex algebras on Riemann surfaces of genus $g\ge 1$ \cite{MTZ, TZ}
inspires the forms of \eqref{gendef}, \eqref{gendefgen}. 
One would be interested in consideration of alternative forms of products such as 
multiple $\epsilon$-sewing \cite{Y} products leading to a different system of 
invariants for foliations. 
That material will be covered in a separate paper. 

Note that \eqref{gendefgen}
 does not depend on the choice of a basis of $u \in V_{(k)}$, $k \in \Z$.
by the standard reasoning \cite{FHL, Zhu}.   
In the case when the forms $\Phi^{(i)}$, $1 \le i \le l$, 
that we multiply    
do not contain $V$-elements,  
  \eqref{gendef} defines the following products
 $\cdot_{\rho_1, \ldots, \rho_l} \left(\Phi^{(i)} \right)$   
\begin{eqnarray}
\label{pipas} 
&& \Theta(f_1, \ldots, f_l;  g_1, \ldots, g_l; \rho_1, \ldots, \rho_l;  
\zeta_{1, 1}, \zeta_{2, 1}; \ldots; \zeta_{1, l}, \zeta_{2, l} )_k   
\nn
&&
\qquad =    
\prod_{i=1}^l  
 \rho_i^k  
\langle w_i', Y^{W^{(i)}}_{W^{(i)}V'}\left(   
\Phi^{(i)}(g_i; u, \zeta_{1, i}), \zeta_{2, i} \right) \; f_i.\overline{u} \rangle.    
\end{eqnarray}  
The right hand side of \eqref{gendefgen}  
is given by a 
formal series of bilinear pairings summed over a vertex algebra basis. 
To complete this definition we have to show 
that a differential form 
that belongs to the space $\W^{(1, \ldots, l)}_{z_1, \ldots, z_{\theta_l-r}}$ 
is defined by  
the right hand side of \eqref{gendefgen}. 
%
As parameters for 
elements of $\W^{(i)}$-spaces, 
we could take $\zeta_{1, i}$   
in \eqref{gendef}, \eqref{gendefgen}. 
%
Note that due to \eqref{wprop}  
it is assumed that  
$\Phi^{(i)}(g_i; v_{1, 1}, x_{1, 1}$;  $\ldots$;     
  $v_{k_i, i}, x_{k_i, i}; u, \zeta_{1, i})$   
are adapted transversal to 
 the grading-restricted generalized
$V$-module $W^{(j)}$,  $1 \le j \le l$,   
 vertex operators 
$Y_{W^{(j)}}(u, -\zeta_{1, j})$.   
 (cf. Subsection \ref{composable}).     
The products \eqref{gendefgen} are actually defined by the sum of 
products of matrix elements of generalized grading-restricted 
$V$-modules $W^{(i)}$, $1 \le i \le l$.  
The parameters $\zeta_{1, i}$ and $\zeta_{2, i}$ satisfy \eqref{rhosew}.
The vertex algebra elements  
$u \in V$ and $\overline{u} \in V'$ 
are related by the bilinear pairing. 
In terms of the theory of correlation functions for 
vertex operator algebras \cite{FMS, Zhu},  
the form of the sequences of multiple 
 products defined above is a natural one.    
\subsection{The product of $\W$-spaces} 
 The main statement of this Section is given by 
\begin{proposition}
\label{glavna}
For $l \ge 1$, elements of the spaces 
$\W^{(1)}_{x_{1, 1}, \ldots, x_{k_1, 1}}$, $\ldots$,  
  $\W^{(l)}_{x_{1, l}, \ldots, x_{k_l, l}}$   
such that the products defined by \eqref{gendefgen} are given by converging expressions, 
define the correspond to maps  
$\cdot_{\rho_1, \ldots, \rho_l}:  
\W^{(1)}_{x_{1, 1}, \ldots, x_{k_1, 1}} \times
 \ldots \times \W^{(l)}_{x_{1, l}, \ldots, x_{k_l, l}}   
\rightarrow \W^{(1, \ldots, \l)}_{z_1, \ldots, z_{\theta_l-r}}$,    
where 
$\W^{(1, \ldots, \l)}_{z_1, \ldots, z_{\theta_l-r}}
= \bigotimes_{i=1}^l \widehat{\W^{(i)}}_{x_{1, i}, \ldots, x_{k_i, i}}$.     
\end{proposition}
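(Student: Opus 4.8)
The plan is to verify that the rational function $\Theta$ produced by \eqref{gendefgen} meets the defining conditions of the space described in Definition \ref{wspace}: that it is a $\bigotimes_{i=1}^l \overline{W^{(i)}}$-valued rational function in the combined variables $(z_1, \ldots, z_{\theta_l-r})$ with poles only along the diagonals $z_i = z_j$, and that it satisfies the $L_V(-1)$-derivative property \eqref{lder1}, the $L_V(0)$-conjugation property \eqref{loconj}, and the shuffle-symmetry property \eqref{shushu}. Once these structural properties are shown to respect the block decomposition induced by the factors $\Phi^{(i)}$, the identification of the codomain with the tensor product $\bigotimes_{i=1}^l \widehat{\W^{(i)}}_{x_{1, i}, \ldots, x_{k_i, i}}$ follows.

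First I would establish that \eqref{gendefgen} genuinely defines a rational function valued in the tensor product of algebraic completions. For each fixed $k$, the summand in \eqref{gendef} is a product over $i$ of matrix elements of the intertwining operators $Y^{W^{(i)}}_{W^{(i)}V'}$ applied to the regularized transversal elements $\Phi^{(i)}$; by the absolute convergence established in Subsection \ref{pilito}, the summation over the $V_{(k)}$-basis and the recurrence \eqref{tupoo} over $k \in \Z$ converge on the domain $F\C_{\sum_{i=1}^l k_i}$ to a meromorphic limit, and $\widehat{\mathcal S}$ selects a single-valued branch. The pole structure is controlled by the regularized transversality conditions of Definition \ref{composabilitydef}: the orders of the poles at $z_i = z_j$ are bounded by the integers $N^n_m(v_i, v_j)$, so the limit is rational in the combined variables with the prescribed singular locus.

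Next I would verify the $L_V(-1)$-derivative and $L_V(0)$-conjugation properties. Since differentiation $\partial_z$ with respect to a parameter $z = x_{j, i}$ acts only on the $i$-th factor $\Phi^{(i)}$, the $L_V(-1)$-derivative property of $\Phi^{(i)}$ (valid because $\Phi^{(i)} \in \W^{(i)}$) converts the derivative into an insertion of $L_V(-1) v_{j, i}$ in that factor; the intertwining operators $Y^{W^{(i)}}_{W^{(i)}V'}$ commute with this operation via their standard $L(-1)$-derivative relation, and neither the basis summation nor $\widehat{\mathcal S}$ is affected. Summing over all parameters then reproduces the total $L(-1)$-action on the tensor product. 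The conjugation property follows analogously: under the simultaneous scaling $z_i \mapsto z z_i$, each factor scales by its $L_V(0)$-conjugation property \eqref{loconj}, the intertwining operators scale by their $L(0)$-conjugation relation, and the weights of the inserted basis vectors $u$, $\overline{u}$ together with the $\rho_i^k$ factors assemble into the overall $z^{L(0)}$ prefactor.

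The main obstacle will be the shuffle-symmetry property \eqref{shushu} for the combined variable set. The difficulty is that \eqref{gendef} is defined in an order fixed by the sequence of modules $W^{(i)}$, whereas \eqref{shushu} demands that the alternating sum over $(s, \theta_l - r - s)$-shuffles of all combined parameters vanish. I would reduce this to two ingredients: first, the symmetry property of each factor $\Phi^{(i)}$ within its own block $(x_{1, i}, \ldots, x_{k_i, i})$, which disposes of shuffles internal to a single block; and second, the associativity and additivity of the products noted after Definition \ref{wprodu}, together with the commutation relations of the intertwining operators $Y^{W^{(i)}}_{W^{(i)}V'}$, which govern the exchange of parameters across distinct blocks. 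The coinciding-parameter elimination encoded in $\widehat{\mathcal S}$ must be tracked carefully through these exchanges, since the hat operation removes repeated parameters and could otherwise disrupt the sign cancellations; verifying that $\widehat{\mathcal S}$ is compatible with the shuffle action is the delicate technical point. Once the cross-block exchanges are shown to produce precisely the alternating signs $(-1)^{|\sigma|}$ required by \eqref{shushu}, the tensor-product identification is immediate from the block structure.
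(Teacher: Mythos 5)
Your proposal follows essentially the same route as the paper: the paper proves Proposition \ref{glavna} by combining Proposition \ref{derga} (absolute convergence and the rational-function/pole structure, obtained from the sewing picture and Cauchy estimates), Lemma \ref{tarusa} (the shuffle symmetry \eqref{shushu}), Lemma \ref{baskal} (existence of the limiting form), and Propositions \ref{katas1} and \ref{katas2} (the $L_V(-1)$-derivative and $L_V(0)$-conjugation properties), which is exactly your checklist, executed in the same order and resting on the same convergence result of Subsection \ref{pilito}. The only divergence is in the symmetry step: where you anticipate a delicate cross-block exchange argument requiring intertwining-operator commutation relations and compatibility of $\widehat{\mathcal S}$ with the shuffle action, the paper instead factorizes the shuffle set as $J^{-1}_{\theta_l-r;s}=J^{-1}_{k_1-r_1;s}\times\cdots\times J^{-1}_{k_l-r_l;s}$ and invokes the symmetry of each factor $\Phi^{(i)}$ block by block, so the cross-block issue you flag as the main obstacle is dispatched (arguably sidestepped) by that factorization rather than analyzed as you propose.
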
   
The rest of this Section is devoted to the proof of Proposition \ref{glavna}. 
Under conditions stated in Proposition \ref{glavna},  
we show that the right hand side of \eqref{gendef}, \eqref{gendefgen}    
belongs to the space $\W^{(1, \ldots, l)}_{z_1, \ldots, z_{\theta_l-r}}$.  
In the view of Proposition \ref{glavna},  
let us denote by  
$\Phi^{(1, \ldots, l)}$ 
 an element of the tensor product $\W^{(1, \ldots, l)}$-valued function  
which would correspond to a rational function 
\begin{eqnarray*}
&&
  \Theta(f_1, \ldots, f_l; g_1, \ldots, g_l;
 v_1, z_1; \ldots; v_{\theta_l-r}, z_{\theta_l-r} )_k  
\nn
&&
\qquad 
=\langle w_i', \Phi^{(1, \ldots, l)} 
(f_1, \ldots, f_l; g_1, \ldots, g_l; v_1, z_1; \ldots; v_n, z_n) \rangle,
\end{eqnarray*}  
obtained as a result of the product \eqref{gendef}.
 
For more general situation discussing 
convergence and well-behavior problem for products of 
of the classical coboundary operators,  
the main approach is the construction of differential equations 
 that products and approximations by using Jacobi identity.
 For the ordinary cohomology theory of grading-restricted vertex algebras,
 such techniques do not work because cochains do 
not satisfy Jacobi identity. 
%
 We will apply the general constructions of \cite{H2, Gui} to 
study properties of products of coboundary operators in another paper.    
\subsection{Convergence of multiple products sequences}   
\label{pilito} 
In \cite{Gui} it was established that the correlation functions for a 
 $C_2$-cofinite vertex operator algebra of conformal field theory type 
are absolutely and locally uniformly convergent
 on the sewing domain since it is a multiple
sewing of correlation functions associated with genus zero conformal blocks.  
In this paper we give an alternative proof 
though one can use the results of \cite{Gui} 
 to prove Proposition \eqref{derga}.  
%
A $\W^{(i)}_{x_{1, i}, \ldots, x_{k_i, i}}$-space 
is defined via of matrix elements of the form \eqref{def}. 
This corresponds \cite{FHL} 
 to matrix element of a number of a vertex algebra $V$-vertex operators 
with formal parameters identified with local coordinates on the Riemann sphere. 
The product of $l$ $\W^{(i)}_{x_{1, i}, \ldots, x_{k_i, i}}$-spaces  
can be geometrically associated with 
 a genus $l \ge 0$ Riemann surface $\Sigma^{(l)}$  
 with a few marked points with 
local coordinates vanishing at these points  
\cite{H2}. 
The center of an annulus used in order 
to sew another handle to a Riemann surface  
is identified with an additional point.    
We have then a geometric interpretation for 
the products \eqref{gendef}, \eqref{gendefgen}.    
A genus $l$ Riemann surface $\Sigma^{(l)}$   
formed in the multiple-sewing procedure represents  
the resulting model space. 
%
  Matrix elements for a number of vertex operators 
are usually associated \cite{FHL, FMS} 
with a vertex algebra correlation functions on the sphere. 
Let us extrapolate this notion to the case of 
$\W^{(i)}_{x_{1, i}, \ldots, x_{k_i, i}}$-spaces, $1 \le i \le l$.   
 We  use the $\rho$-sewing procedure  
for the Riemann surface  
 with attached handles 
in order to supply an appropriate geometric construction of the products 
to obtain a matrix element   
associated with the definition of the multiple products \eqref{gendef}, \eqref{gendefgen}.   

Similar to \cite{H2, Huang0, H20, Y, Zhu,  FMS, BZF}
let us identify local coordinates of the corresponding sets of points 
on the resulting model genus $l$ Riemann surface 
with the sets $(x_{1, i}, \ldots, x_{k_i, i})$, $1 \le i \le l$ of   
complex formal parameters.  
The roles of coordinates \eqref{disk} of 
the annuluses \eqref{zhopki} 
can by played by the complex parameters $\zeta_{1, i}$ and $\zeta_{2, i}$ 
of \eqref{gendef}, \eqref{gendefgen}. 
 Several groups of coinciding coordinates may occur
on identification of annuluses $\mathcal A_{a, i}$ and $\mathcal A_{\overline{a}, i}$.  
As a result of the $(\rho_1, \ldots, \rho_l)$-parameter sewing \cite{Y}, 
the sequence of products \eqref{gendef}, \eqref{gendefgen}  
describes a differential form that belongs to the space $\W^{(1, \ldots, l)}$ defined  
on a genus $l$ Riemann surface $\Sigma^{(l)}$.      
Since $l$ initial spaces $\W^{(i)}_{x_{1, i}, \ldots, x_{k_i, i}}$ contain  
 $\widetilde{W^{(i)}}_{x_{1,i}, \ldots, x_{k_i, i}}$-valued differential forms   
expressed by 
 matrix elements of the form \eqref{def},  
it is then proved (see Proposition \ref{derga} below), 
that the resulting products define 
elements of the space 
$\W^{(1, \ldots, l)}_{z_1, \ldots, z_{\theta_l-r}}$   
 by means of absolute convergent matrix elements
 on the resulting genus $l$ Riemann surface.    
 The sequences of multiple products of  
$\W^{(i)}_{x_{1, i}, \ldots, x_{k_i, i}}$-spaces 
as well as the moduli space of the resulting genus $l$ 
 Riemann surface $\Sigma^{(l)}$ are described by 
the complex sewing parameters $(\rho_1, \ldots, \rho_l)$.  
\begin{proposition}
\label{derga}
The total sequence of products \eqref{gendef}, \eqref{gendefgen}  
of elements of the spaces
 $\W^{(i)}_{x_{1, i}, \ldots, x_{k_i, i}}$, $1 \le i \le l$,  
 corresponds to rational functions  
absolutely converging in all complex parameters $(\rho_1, \ldots, \rho_l)$     
with only possible poles at $x_{j, m'}=x_{j', m''}$,  
  $1 \le j\le k_{m'}$, $1 \le j' \le k_{m''}$, $1 \le m'$,  $m'' \le l$, $l \ge 1$.  
\end{proposition}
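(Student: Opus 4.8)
The plan is to exploit the factorized, geometric structure of the product \eqref{gendefgen}: each factor is already a genus-zero object, given after regularization by the matrix element \eqref{def}, and the full expression is the correlation function on the genus $l$ surface $\Sigma^{(l)}$ assembled by the $(\rho_1,\ldots,\rho_l)$-sewing described in Subsections \ref{geoma}, \ref{nahuy}. First I would reduce to the single-handle case. Since the products \eqref{gendef}, \eqref{gendefgen} are associative and additive by construction and the $l$ annuli \eqref{zhopki} used in the sewing are pairwise disjoint (guaranteed by the smallness of the radii $r_1$, $r_2$ imposed in \eqref{disk}), convergence in the full tuple $(\rho_1,\ldots,\rho_l)$ on the product of sewing domains follows by induction on $l$ from convergence of a single $\rho$-product \eqref{mangal}. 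Thus it suffices to treat one handle and then assemble.

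For the single handle, the object to control is the series $\sum_{k\in\Z}\rho^k\langle w', Y^{W}_{WV'}(\Phi(g;\ldots;u,\zeta_1),\zeta_2)\,f.\overline{u}\rangle$, where for each fixed $k$ the sum over a basis $\{u\in V_{(k)}\}$ is finite by the grading restriction on $V$. The heart of the proof is to show that this series converges absolutely and locally uniformly on the annular sewing domain $|\rho|\le r_1 r_2$ fixed by \eqref{disk}, \eqref{zhopki}. I would build this from the recurrence \eqref{tupoo}: at each step the regularization $\mathcal R$ selects a branch whose singularities sit at a distance $\epsilon(k)$ bounded away from $0$ as $k\to\pm\infty$, so the increments do not accumulate singularities and the partial sums form a normal family on compact subsets of the sewing domain. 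This is exactly the mechanism established geometrically in \cite{H2, Y} and proven, for $C_2$-cofinite vertex operator algebras of conformal field theory type, in \cite{Gui}; here it supplies the required growth control on the matrix elements weighted by $\rho^k$.

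Having convergence, rationality and the localization of poles follow. By the defining property \eqref{def} of the spaces $\W^{(i)}_{x_{1,i},\ldots,x_{k_i,i}}$, each factor is, after the regularization $R$, a rational function in its own parameters with the only possible poles at the internal coincidences $x_{j,i}=x_{j',i}$. The finite basis-sum at each grading level preserves rationality, and the operation $\widehat{\mathcal S}$ selects a single-valued meromorphic branch; the limit is therefore holomorphic throughout the sewing region except at genuine collisions of the marked points on $\Sigma^{(l)}$, hence rational. Because the sewing identifies the annular coordinates $\zeta_{1,i},\zeta_{2,i}$ through \eqref{rhosew} and does not couple distinct $x$-variables, no new poles are created away from the loci $x_{j,m'}=x_{j',m''}$, $1\le j\le k_{m'}$, $1\le j'\le k_{m''}$, $1\le m',m''\le l$. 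Together with the single-handle convergence and the factorization over disjoint handles, this yields the claimed absolute convergence in all $(\rho_1,\ldots,\rho_l)$ to a rational function with the stated pole set, which is precisely the input needed for Proposition \ref{glavna}.

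The main obstacle is the absolute convergence of the intermediate-state sum over $u\in V_{(k)}$ as $k\to\pm\infty$. Controlling the growth of these matrix elements and, simultaneously, verifying that the branch choices dictated by $\mathcal R$ at successive values of $k$ patch into a single-valued rational (and not merely formal or multivalued) function is the genuine analytic content; the geometric sewing interpretation reduces it to the convergence of a single genus-increment, but making the radius of convergence in $\rho$ explicit in terms of the radii $r_1$, $r_2$ and the minimal separation $\min_{i\ne j}|x_i-x_j|$ of the marked points is where the real work lies.
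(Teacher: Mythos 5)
Your proposal follows the paper's geometric sewing framing, but it stops short of the one step that actually carries the proof, and the substitute you offer does not work. The paper's argument (Proposition \ref{derga}) expands the product as a formal series in the $\rho_i$ whose coefficients are the matrix elements $\widetilde{M}^{(i)}_{q_i}$ of \eqref{format1} (obtained via Proposition \ref{n-comm} after commuting out $e^{\zeta_{2,i}L_{W^{(i)}}(-1)}$), and then bounds those coefficients by \emph{Cauchy's inequality}, $|\widetilde{M}^{(i)}_{q_i}|\le M_i R_i^{-q_i}$ as in \eqref{Cauchy1}, which gives geometric decay and hence absolute convergence of the series in $(\rho_1,\ldots,\rho_l)$ on $|\rho_i|\le r_i<r_{1,i}r_{2,i}$, with the pole locus read off from the rationality of each coefficient. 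Your replacement for this step --- that the branch choices of $\mathcal R$ in \eqref{tupoo} keep singularities a distance $\epsilon(k)$ apart, so "the partial sums form a normal family" --- is a non sequitur: non-accumulation of singularities constrains where the terms can blow up, but says nothing about the size of the terms, and normality of a family of partial sums does not give convergence (let alone absolute convergence) of the series. You yourself flag this in your last paragraph ("making the radius of convergence in $\rho$ explicit \ldots is where the real work lies"), i.e., you identify the crux but do not supply it; that crux is exactly the Cauchy estimate.

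A second, smaller problem: you lean on \cite{Gui} to "supply the required growth control," but that result is proved for $C_2$-cofinite vertex operator algebras of CFT type, a hypothesis nowhere assumed in this proposition, which concerns general grading-restricted vertex algebras and their grading-restricted generalized modules. The paper explicitly notes that \cite{Gui} \emph{could} be used in that restricted setting but then gives the self-contained alternative precisely to avoid that hypothesis; importing \cite{Gui} as a black box therefore does not prove the statement as claimed. Your induction-on-$l$ reduction to a single handle is harmless (the paper effectively factorizes over handles too, bounding each factor and taking $M=\min\{M_i\}$, $R=\max\{R_i\}$), but it cannot compensate for the missing coefficient bound: to complete the argument you would need, for each handle, the supremum bound $M_i$ over $|\zeta_{a,i}|\le R_{a,i}$, $|\rho_i|\le r_i$ and the resulting estimate $|\Theta(\ldots)_{k,i}|\le MR^{-k+q_i+1}$, after which rationality and the restriction of poles to $x_{j,m'}=x_{j',m''}$ follow as you describe.
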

\begin{proof}
\end{proof}
\subsection{Symmetry properties}
\label{symprop}
Let us assume that $g_i$, $f_i$ commute with $\sigma(i) \in S_l$, $l \ge 1$.

 The action of an element $\sigma \in S_{\theta_l-r}$ 
on the sequence of products of 
$\Phi^{(i)}$ $(g_i$; $v_{1,1}, x_{1,1}$;  $\ldots; 
v_{k_i, i}, x_{k_i, i}) \in \W^{(i)}_{x_{1,1}, \ldots, x_{k_i, i}}$, 
$l \ge 1$, is defined as  
\begin{eqnarray}
\label{bardos} 
&& \sigma(\Theta) 
\left(f_1, \ldots, f_l; g_1, \ldots, g_l;
v_1, z_1; \ldots; v_{\theta_l-r}, z_{\theta_l-r}; \rho_1, \ldots, \rho_l \right)_k  
\\
\nonumber 
  && =  \Theta \left(f_1, \ldots, f_l; g_1, \ldots, g_l; v_{\sigma(1)}, z_{\sigma(1)}; 
 \ldots; v_{\sigma(\theta_l-r)},  
z_{\sigma (\theta_l-r)}; \rho_1, \ldots, \rho_l\right)_k, 
\end{eqnarray}
and the total multiple product \eqref{gendefgen} correspondingly. 

Note that \eqref{bardos} assumes that $\sigma \in S_{\theta_l-r}$ does not 
act on $\zeta_{a, i}$, $a=1$, $2$, $1\le i \le l$ in the products \eqref{gendef}, 
\eqref{gendefgen}.   
The results of this Section 
below extend to corresponding total multiple products. 
%
Next, we prove 
\begin{lemma}
\label{tarusa}
The products \eqref{gendef}, \eqref{gendefgen} satisfy  
 \eqref{shushu} for $\sigma \in S_{\theta_l-r}$, i.e., 
\begin{eqnarray*} 
&& \sum_{\sigma\in J_{\theta_l-r; s}^{-1} } (-1)^{|\sigma|}
  \Theta\left(f_1, \ldots, f_l; g_1, \ldots, g_l; \right. 
\nn
&& \qquad \qquad \qquad  \left. 
v_{\sigma(1)}, z_{\sigma(1)}; \ldots; v_{\sigma(\theta_l-r)}, z_{\sigma(\theta_l-r)};   
 \rho_1, \ldots, \rho_l \right)_k 
=0. 
\end{eqnarray*}
\end{lemma}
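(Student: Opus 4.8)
The plan is to prove the identity separately for each fixed split $1 \le s \le \theta_l-r-1$, since this is one of the three defining properties of the tensor-product space needed for Proposition \ref{glavna}. By the action \eqref{bardos}, the permutation $\sigma$ reorders only the external arguments $(v_1, z_1; \ldots; v_{\theta_l-r}, z_{\theta_l-r})$ and leaves the sewing parameters $\rho_i$, $\zeta_{1, i}$, $\zeta_{2, i}$ and the handle contraction over $\{u \in V_{(k)}\}$ fixed. Under the enumeration of Subsection \ref{kapusta} these arguments occupy consecutive blocks $B_1, \ldots, B_l$ whose sizes sum to $\theta_l-r$, with $B_i$ feeding the matrix element of $\Phi^{(i)}$ in \eqref{gendef}. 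I would first pass to the geometric realization of Proposition \ref{derga}, where $\widehat{\mathcal S}$ together with the basis sum turns $\Theta$ into a single rational matrix element on the sewn surface $\Sigma^{(l)}$; this is what permits one to treat permutation identities valid only after analytic continuation as genuine equalities.

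Next I would factor the shuffle antisymmetrizer through the deconcatenation $B_1 | \cdots | B_l$. Each $(s, \theta_l-r-s)$-shuffle is encoded uniquely by a distribution $(s_1, \ldots, s_l)$ with $\sum_i s_i = s$ of the selected slots among the blocks, by an internal $(s_i, |B_i|-s_i)$-shuffle inside each $B_i$, and by the two order-preserving interleavings of the selected and of the unselected sub-blocks across all $i$. Since $(-1)^{|\sigma|}$ is the product of the internal signs with the interleaving sign, and the latter is common to all terms sharing a given $(s_1, \ldots, s_l)$, the combined antisymmetrizer reorganizes as a sum over $(s_1, \ldots, s_l)$ of products of internal shuffle operators acting separately on the blocks.

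The main obstacle is twofold, and it is exactly where the vertex-algebra structure must enter. First, $\sigma$ never moves the frozen handle slots $(u, \zeta_{1, i})$, so the internal operator on $B_i$ is the shuffle-antisymmetrizer on the external variables of $\Phi^{(i)}$ \emph{alone}, rather than the full relation \eqref{shushu}, which also permutes the handle argument; recovering a vanishing requires using that the handle variable is summed and contracted through $Y^{W^{(i)}}_{W^{(i)}V'}$, so that \eqref{shushu} for $\Phi^{(i)}$ on all $k_i+1$ of its variables degenerates, after the basis sum, to the needed relation on the external variables. Second, the terms with every $s_i \in \{0, |B_i|\}$ leave the blocks internally intact and merely interleave whole blocks; these cannot vanish by combinatorics, as a naive product of block functions is generally not shuffle-symmetric across blocks, and must instead be controlled by the locality and rationality of the matrix element on $\Sigma^{(l)}$, which force an exchange of entire insertion blocks to reproduce precisely the sign $(-1)^{|\sigma|}$, so that the whole-block contributions recombine into the order-preserving symmetry enforced by $\widehat{\mathcal S}$ and cancel. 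Carrying out both reductions makes every term in the reorganized sum vanish, giving the identity for each $s$; the identical argument applied to \eqref{gendefgen} for all $k \in \Z$ then yields the total statement.
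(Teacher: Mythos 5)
Your opening moves coincide with the paper's own proof: expand \eqref{gendef}, note that $\sigma$ touches only the external arguments and not $(u,\zeta_{1,i})$, $\rho_i$, $\zeta_{2,i}$, and push the shuffle antisymmetrization into the individual factors so that the symmetry \eqref{shushu} of each $\Phi^{(i)}$ forces the vanishing. Where you diverge is in the combinatorics, and there your analysis is the more careful one: the paper simply asserts the factorization $J^{-1}_{\theta_l-r;s}=J^{-1}_{k_1-r_1;s}\times\cdots\times J^{-1}_{k_l-r_l;s}$ and stops, which is false as stated (the left side has $\binom{\theta_l-r}{s}$ elements, the right side does not even typecheck when $s>k_i-r_i$, and the cross-block shuffles are simply absent from it). Your decomposition --- a distribution $(s_1,\ldots,s_l)$ of selected slots over the blocks, internal $(s_i,|B_i|-s_i)$-shuffles, and a distribution-dependent interleaving sign --- is the correct refinement, and for every distribution with some $1\le s_i\le |B_i|-1$ your argument (granting the point about the handle slot) produces exactly the block-wise vanishing the paper claims.

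However, both places where you say the vertex-algebra structure "must enter" are genuine gaps in your proposal, not steps you have carried out. First, the handle slot: property \eqref{shushu} for $\Phi^{(i)}$ on all $k_i+1$ arguments concerns the full shuffle sets $J^{-1}_{k_i+1;s}$, whereas the sums you need run only over the sub-family of shuffles fixing the last slot; vanishing of the full sums does not imply vanishing of these sub-sums, and the claim that this "degenerates, after the basis sum, to the needed relation" is an assertion with no mechanism behind it. Second, and more seriously, the whole-block distributions: the term with, say, $s_1=0$ and $s_2=|B_2|$ evaluates $\Phi^{(1)}$ on block~$2$'s arguments inside $\langle w_1', Y^{W^{(1)}}_{W^{(1)}V'}(\cdots)\,f_1.\overline{u}\rangle$ and $\Phi^{(2)}$ on block~$1$'s arguments inside a separate pairing against $w_2'$. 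Locality \eqref{porosyataw} is a statement about exchanging insertions within a single correlation function; it yields no relation exchanging arguments between two distinct matrix elements taken in different modules $W^{(1)}$, $W^{(2)}$ against different $w_i'$, so there is no visible cancellation mechanism for these block-swap terms, and the appeal to "locality and rationality on $\Sigma^{(l)}$" cannot be carried out as described. To be fair, the paper's own proof never confronts either issue --- both are hidden inside the false factorization of the shuffle set --- so your proposal at least locates the real difficulties; but as written it does not close them.
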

\begin{proof}
 For arbitrary $w'_i \in W^{(i)}{}'$, $1 \le i \le l$, 
\begin{eqnarray*}
 \sum_{\sigma\in J_{\theta_l-r; s}^{-1}}(-1)^{|\sigma|} 
  \Theta \left(f_1, \ldots, f_l; g_1, \ldots, g_l;
v_{\sigma(1)}, z_{\sigma(1)}; \ldots; v_{\sigma(\theta_l-r)}, z_{\sigma(\theta_l-r)};  
 \rho_1, \ldots, \rho_l \right)_k \qquad \qquad \qquad \qquad  &&  
\nn
=
\sum_{\sigma\in J_{\theta_l-r; s}^{-1}}(-1)^{|\sigma|}  \;    
 \mathcal R \prod_{i=1}^l
 \rho^k_i 
 \langle w'_i, Y^{W^{(i)}}_{W^{(i)}V'} 
\left(  
\Phi^{(i)}(g_i; 
v_{\sigma(n_i+1)}, z_{\sigma(n_i+1)}; \ldots; \right.  \qquad \qquad \qquad \qquad &&
\nn
\left. v_{\sigma(n_i+k_i-r_i)}, z_{\sigma(n_i+k_i-r_i)};     
u, \zeta_{1, i}), 
  \zeta_{2, i} \right)\; f_i.\overline{u} \rangle \qquad \qquad  \qquad \qquad &&
\nn
= \sum_{\sigma\in J_{\theta_l-r; s}^{-1}}(-1)^{|\sigma|}    
 \mathcal R \prod_{i=1}^l 
  \langle w'_i, e^{\zeta_{2, i} L_{W^{(i)}}(-1)} \; Y_{W^{(i)}}(f_i.\overline{u}, -\zeta_{2, i}) \; 
 \qquad \qquad  \qquad \qquad  \qquad \qquad && 
\nn
 \Phi^{(i)}(g_i; 
v_{\sigma(n_i+1)}, z_{\sigma(n_i+1)}; 
\ldots; v_{\sigma(n_i+k_i-r_i)}, z_{\sigma(n_i+k_i-r_i)}; 
u, \zeta_{1, i})   
\rangle.   \qquad \qquad \qquad \qquad &&
\end{eqnarray*}
We obtain 
for an element $\sigma \in S_{\theta_l-r}$ 
 inserted inside the intertwining operator 
\begin{eqnarray*}  
 \mathcal R\prod_{i=1}^l
 \rho^k_i 
  \langle w'_i, e^{\zeta_{2, i} L_{W^{(i)}}(-1)} 
\; Y_{W^{(i)}}(f_i. \overline{u}, -\zeta_{2, i})  
 \qquad \qquad  \qquad \qquad \qquad \qquad 
\qquad \qquad \qquad \qquad \qquad \qquad && 
\nn
\sum_{\sigma\in J_{k_i-r_i; s}^{-1}}(-1)^{|\sigma|} 
\Phi^{(i)}(g_i;
v_{\sigma(n_i+1)}, z_{\sigma(n_i+1)}; \ldots; 
v_{\sigma(n_i+k_i-r_i)}, z_{\sigma(n_i+k_i-r_i)}; 
u, \zeta_{1, i})    
\rangle=0, \qquad \qquad \qquad \qquad \qquad  && 
\end{eqnarray*}
since,
$J^{-1}_{\theta_l-r; s}= J^{-1}_{k_1-r_1;s} \times \ldots \times  J^{-1}_{k_l-r_l;s}$,    
and due to the fact that 
$\Phi^{(i)}(g_i$; $v_{1,1}, x_{1, 1}$ ; $\ldots$;     
  $v_{k_1, 1}, x_{k_1, 1}$;  
$v_{1, i},  x_{1, i}$; $\ldots$; $v_{k_i, i}, x_{k_i, i}$;     
 $u, \zeta_{1, i})$  
satisfy \eqref{sigmaction}.   
\end{proof}
\subsection{The existence, $L_V(-1)$-derivative, and $L_V(0)$-conjugation properties}
\label{exderconj}
In this subsection 
 we prove the existence of an appropriate differential form that belongs to 
$\W^{(1, \ldots, l)}_{z_1, \ldots, z_{\theta_l-r}}$     
corresponding to an absolute convergent  
 $\Theta(f_1, \ldots, f_l$; $g_1, \ldots, g_l$;  
 $v_1, z_1$; $\ldots$; $v_{\theta_l-r}$,  $z_{\theta_l-r})$   
defining the $(\rho_1, \ldots, \rho_l)$-product of elements of 
the spaces 
$\W^{(i)}_{ x_{1, 1}, \ldots, x_{k_i, i}}$. 
%
\begin{lemma}
\label{baskal}
For all choices of sets of elements of the spaces 
$\W^{(i)}_{x_{1,i}, \ldots, x_{k_i, i}}$, 
$1 \le i \le l$,  
there exists a differential form characterized by the element 
 $\Theta(f_1, \ldots, f_l;  g_1, \ldots, g_l$;   
 $v_1, z_1; \ldots; v_{\theta_l-r}, z_{\theta_l-r}$; $\rho_1$, $\ldots, \rho_l)_k    
\in \W^{(1, \ldots, l)}_{z_1, \ldots, z_{\theta_l-r}}$ 
such that the product \eqref{gendefgen}  
converges to a rational function  
\begin{eqnarray*}
&&
R(v_1, z_1; \ldots; v_{\theta_l-r}, z_{\theta_l-r}; 
 \rho_1, \ldots, \rho_l)  
\nn
&&
\qquad =   \Theta(f_1, \ldots, f_l;  g_1, \ldots, g_l; 
v_1, z_1; \ldots; v_{\theta_l-r}, z_{\theta_l-r}; \rho_1, \ldots, \rho_l)_k.    
\end{eqnarray*} 
\end{lemma}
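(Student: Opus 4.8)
The plan is to reconstruct the abstract differential form from the rational function already produced by Proposition \ref{derga}, using the duality $\overline{W}=(W')^*$ that characterizes the algebraic completion. First I would fix arbitrary dual vectors $w'_i\in W^{(i)}{}'$, $1\le i\le l$, and recall from the proof of Proposition \ref{derga} that the series \eqref{gendefgen} converges absolutely, on the domain $|\zeta_{a,i}|\le r_{a,i}$, $|\rho_i|\le r_i<r_{1,i}r_{2,i}$, to a rational function $R$ in $(z_1,\ldots,z_{\theta_l-r})\in F_{\theta_l-r}\C$ whose only possible poles are at coinciding formal parameters $z_i=z_j$. This supplies the candidate matrix element, namely $\langle w', \Theta(f_1,\ldots,f_l;g_1,\ldots,g_l;v_1,z_1;\ldots;v_{\theta_l-r},z_{\theta_l-r};\rho_1,\ldots,\rho_l)_k\rangle = R$, with $w'$ built from the $w'_i$ through the tensor structure $(W^{(1,\ldots,l)})'=W^{(1)}{}'\otimes\cdots\otimes W^{(l)}{}'$ anticipated by Proposition \ref{glavna}.

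Next I would observe that $R$ depends linearly on each $w'_i$, since every factor $\langle w'_i, Y^{W^{(i)}}_{W^{(i)}V'}(\cdots)\rangle$ is linear in $w'_i$, the product over $i$ is multilinear, and the branch selection $\widehat{\mathcal S}$ (resp. $\widehat{\mathcal R}$) performed uniformly on the domain of convergence preserves this linear dependence. Hence, for each fixed $(z_1,\ldots,z_{\theta_l-r})$, the assignment $w'\mapsto R$ is a linear functional on $(W^{(1,\ldots,l)})'$, and by the definition $\overline{W}=(W')^*$ this functional is an element of $\overline{W}^{(1,\ldots,l)}$. One thereby obtains a well-defined map $\Phi^{(1,\ldots,l)}:V^{\otimes(\theta_l-r)}\to \overline{W}^{(1,\ldots,l)}_{z_1,\ldots,z_{\theta_l-r}}$ determined by $\langle w',\Phi^{(1,\ldots,l)}\rangle=R$. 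Invoking Remark \ref{bezdna}, this datum---a complex-valued bilinear pairing that is rational with the stated poles---characterizes a genuine $\overline{W}^{(1,\ldots,l)}$-valued rational function; tensoring with the weight powers of the differentials $dz_j^{\wt(v_j)}$ as in Definition \ref{wspace} produces the differential form. Its membership in $\W^{(1,\ldots,l)}_{z_1,\ldots,z_{\theta_l-r}}$ then rests on the three defining properties: the symmetry property \eqref{shushu} for $\sigma\in S_{\theta_l-r}$ is exactly Lemma \ref{tarusa}, while the $L_V(-1)$-derivative and $L_V(0)$-conjugation properties are inherited from those of the factors $\Phi^{(i)}$ transported through the intertwining operators $Y^{W^{(i)}}_{W^{(i)}V'}$, as established in the remaining lemmas of this subsection.

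The hard part will be confirming that the pointwise-defined linear functional genuinely lands in the algebraic completion and that the convergent series defines a single rational function rather than merely a branch-dependent meromorphic germ. This is precisely where the regularization machinery of Subsection \ref{nahuy} is essential: the branch-selection operation $\mathcal S$ and the recurrence \eqref{tupoo} with its minimal-distance criterion $\epsilon(k)$ guarantee that the limit is single-valued and that singularities do not accumulate as $k\to\pm\infty$, so that the reconstruction via $\overline{W}=(W')^*$ is consistent across the whole configuration space $F_{\theta_l-r}\C$. Once single-valuedness and the pole structure from Proposition \ref{derga} are in hand, the passage from the matrix element $R$ to the abstract $\overline{W}^{(1,\ldots,l)}$-valued form is a formal consequence of the duality, and the lemma follows.
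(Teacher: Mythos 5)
Your proposal is correct and follows essentially the same route as the paper: the paper deduces Lemma \ref{baskal} immediately from the absolute convergence to a rational function established in the proof of Proposition \ref{derga}, which is exactly your starting point. Your additional steps---reconstructing the $\overline{W}^{(1,\ldots,l)}$-valued form via the duality $\overline{W}=(W')^*$ together with Remark \ref{bezdna}, and appealing to Lemma \ref{tarusa} and the $L_V(-1)$-derivative and $L_V(0)$-conjugation propositions for membership in $\W^{(1,\ldots,l)}_{z_1,\ldots,z_{\theta_l-r}}$---merely make explicit what the paper leaves implicit.
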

The action of  
 $\partial_s=\partial_{z_s}={\partial}/{\partial_{z_s}}$, $1\le s \le \theta_l-r$, 
on $\widehat{\Theta}$ is defined as 
\begin{eqnarray*}
 \partial_s  \Theta 
 (f_1, \ldots, f_l; g_1, \ldots, g_l; 
v_1, z_1;   \ldots;  v_{\theta_l},  z_{\theta_l} 
   \rho_1, \ldots, \rho_l)_k    
\qquad \qquad \qquad \qquad \qquad \qquad && 
\nn
=     
 \mathcal R \prod_{i=1}^l
\rho^k_i
 \langle w'_i, \partial_s 
Y^{W^{(i)}}_{W^{(i)}V'} 
\left(  
\Phi^{(i)}(g_i; v_{n_i+1},  z_{n_i+1};  \ldots; \right. 
\qquad \qquad \qquad \qquad &&
\nn
\left.  v_{n_i+k_i-r_i}, z_{n_i+k_i-r_i};   
 u, \zeta_{1, i}), 
  \zeta_{2, i} \right) f_i.\overline{u} \rangle. \qquad  && 
\end{eqnarray*}
\begin{proposition}
\label{katas1}
The products \eqref{gendef}, \eqref{gendefgen} satisfy  
the properties \eqref{lder1} and \eqref{loconj}.  
\end{proposition}
\begin{proof}
 By using \eqref{lder1} for 
$\Phi^{(i)}(g_i; v_{1,i},  x_{1,i};  \ldots; v_{k_i,i}, x_{k_i,i})$ 
 we consider 
\begin{eqnarray}
\label{ohaio} 
&& \partial_s 
   \Theta(f_1, \ldots, f_l; g_1, \ldots, g_l; 
v_1, z_1; \ldots; v_{\theta_l-r}, z_{\theta_l-r};
  \rho_1, \ldots, \rho_l)_k \qquad \qquad \qquad 
\\
&&\qquad  = 
\mathcal R    
 \prod_{i=1}^l
\rho^k_i 
\langle w'_i, \partial_s 
   \left( e^{\zeta_{2, i}  L_{W^{(i)}}(-1)} Y_{W^{(i)}} 
\left( f_i.\overline{u}, - \zeta_{2, i} \right)  \right. 
\nn
\nonumber
&&  \qquad \qquad \qquad
\left.  
 \Phi^{(i)}(g_i;
v_{n_i+1},  z_{n_i+1};  \ldots;  v_{n_i+k_i-r_i}, z_{n_i+k_i-r_i};  u, \zeta_{1, i})\right) 
\rangle   
\end{eqnarray}
\begin{eqnarray*}
 =  
 \mathcal R \prod_{i=1}^l
 \rho^k_i
 \langle w', 
Y^{W^{(i)}}_{W^{(i)}V'}  
\left( \sum\limits_{j=1}^{k_i-r_i} \partial_s^{\delta_{s,j}}
\Phi^{(i)}\left(g_i; 
 v_{n_i+1},  z_{n_i+1};  \ldots; \right. \right. &&
\nn
 \left. \left. \qquad \qquad v_{n_i+k_i-r_i}, z_{n_i+k_i-r_i};  
 u, \zeta_{1, i}\right), 
  \zeta_{2, i} \right) f_i.\overline{u} \rangle && 
\end{eqnarray*}
\begin{eqnarray*}
&& =   
 \mathcal R \prod_{i=1}^l
 \rho^k_i
 \langle w', 
Y^{W^{(i)}}_{W^{(i)}V'} 
\left(   \sum\limits_{j=1}^{k_i-r_i} 
\Phi^{(i)}(g_i; v_{n_i+1},  z_{n_i+1};  \ldots;  \right. 
\nn
&&
\qquad  \left. \left(L_V{(-1)}\right)^{\delta_{s,j}}.v_s, x_s;  
\ldots; v_{n_i+k_i-r_i}, z_{n_i+k_i-r_i};
 u, \zeta_{1, i}, 
  \zeta_{2, i} \right) f_i.\overline{u} \rangle  
\end{eqnarray*}
\begin{eqnarray*}
=    
  \Theta (f_1, \ldots, f_l;  
g_1, \ldots, g_l;  
v_1, z_1;  \ldots;  \left(L_V{(-1)}\right)_s;  \ldots;  v_{\theta_l-r},  z_{\theta_l-r};  
\rho_1, \ldots, \rho_l)_k. &&    
\end{eqnarray*}
By summing over $s$ we obtain
\begin{eqnarray*}
\sum\limits_{s=1}^{\theta_l-r} \partial_s   
 \Theta (f_1, \ldots, f_l; g_1, \ldots, g_l;  
v_1, z_1;  \ldots;  v_{\theta_l-r},  z_{\theta_l-r}; \rho_1, \ldots, \rho_l)_k
\qquad \qquad \qquad 
&&
\nn
 = 
\sum\limits_{s=1}^{\theta_l-r} 
   \Theta (f_1, \ldots, f_l; g_1, \ldots, g_l; 
v_1, z_1;  \ldots;  \left(L_V{(-1)}\right)_s;  \ldots;  v_{\theta_l-r},  z_{\theta_l-r}
; \rho_1, \ldots, \rho_l)_k &&
\nn
= L_{W^{(i)}}{(-1)}.   \Theta(f_1, \ldots, f_l; g_1, \ldots, g_l;  
v_1, z_1;  \ldots;  v_{\theta_l-r},  z_{\theta_l-r}; \rho_1, \ldots, \rho_l)_k. &&
\end{eqnarray*}  
\end{proof}
We define also 
\begin{eqnarray}
\label{musaka}
  \widehat{\Theta} 
 \left(y_1^{  L_{W^{(1)}}  (0)}, \ldots, y_l^{  L_{W^{(l)}}  (0)}; 
 f_1, \ldots, f_l; g_1, \ldots, g_l; \right. \qquad \qquad \qquad &&
\nn
\left. 
 v_1, z_1;  \ldots;  v_{\theta_l-r}, x_{\theta_l-r}; \rho_1, \ldots, \rho_l \right)_k 
\quad &&
\nn
 =
 \mathcal R \prod_{i=1}^l
\rho^k_i   \langle w_i, 
 \Phi^{(i)} \left(g_i;  y_i^{ L_{W^{(i)}}  (0)} v_{n_i+1},  y_i\; z_{n_i+1};     
\ldots;  \right. \qquad \qquad  &&
\nn
 \left. 
 y_i^{ L_{W^{(i)} }  (0)} v_{n_i+k_i-r_i}, y_i \; z_{n_i+k_i-r_i};   
  u,  \zeta_{1, i}, 
  \zeta_{2, i} \right) f_i.\overline{u} \rangle. \; &&
\end{eqnarray} 
\begin{proposition}
\label{katas2}
The products \eqref{gendef}, \eqref{gendefgen}
  satisfy 
the properties \eqref{loconj}.  
\end{proposition}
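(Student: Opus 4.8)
The plan is to mirror the proof of Proposition \ref{katas1}, replacing the $L_V(-1)$-derivative property \eqref{lder1} by the $L_V(0)$-conjugation property \eqref{loconj} of each factor $\Phi^{(i)}$, and to read off the conjugation property of the product directly from the definition \eqref{musaka} after specializing $y_1=\ldots=y_l=y$.

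First I would start from \eqref{musaka} and observe that its right-hand side carries, inside each $\Phi^{(i)}$, the external scalings $(v_{n_i+j}, z_{n_i+j}) \mapsto (y^{L_V(0)} v_{n_i+j}, y\, z_{n_i+j})$ while leaving the internal slot $(u, \zeta_{1, i})$ unscaled. To invoke \eqref{loconj} for $\Phi^{(i)}$ in full, all of its arguments must be scaled uniformly; since $u \in V_{(k)}$ is homogeneous this amounts to the additional replacements $u \mapsto y^k u$ and $\zeta_{1, i} \mapsto y\,\zeta_{1, i}$. The key step is to supply these internal scalings for free. By the reasoning following \eqref{gendefgen}, the regularized product is independent of the basis $\{u \in V_{(k)}\}$ and of $\zeta_{1, i}, \zeta_{2, i}$ individually, depending only on $\rho_i=\zeta_{1, i}\zeta_{2, i}$. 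Hence I may rescale the basis by $u \mapsto y^k u$, compensated by $\overline{u} \mapsto y^{-k}\overline{u}$ under $\langle .,.\rangle_\lambda$, and reparametrize $\zeta_{1, i} \mapsto y\,\zeta_{1, i}$, $\zeta_{2, i} \mapsto y^{-1}\zeta_{2, i}$ keeping $\rho_i$ fixed, none of which changes the value of \eqref{musaka}.

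After this adjustment every slot of each $\Phi^{(i)}$ is uniformly scaled, so \eqref{loconj} applied to $\Phi^{(i)}$ pulls the operator $y^{L_{W^{(i)}}(0)}$ to the left of $\Phi^{(i)}$. Commuting it through the outer intertwining operator $Y^{W^{(i)}}_{W^{(i)}V'}$ by the $L(0)$-conjugation behaviour of intertwiners, together with the rewriting into $e^{\zeta_{2, i} L_{W^{(i)}}(-1)} Y_{W^{(i)}}(f_i.\overline{u}, -\zeta_{2, i})$ used in the proof of Proposition \ref{derga}, the factors $y^{L_{W^{(i)}}(0)}$ collect to the left of the entire product. Using $L_{W^{(1, \ldots, l)}}(0)=\sum_{i=1}^l L_{W^{(i)}}(0)$ on the tensor product $\W^{(1, \ldots, l)}_{z_1, \ldots, z_{\theta_l-r}}$, these combine into $y^{L_{W^{(1, \ldots, l)}}(0)}$, and comparison with Lemma \ref{baskal} gives exactly \eqref{loconj} for $\Theta$.

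The main obstacle is precisely the bookkeeping of this internal reparametrization: one must verify that the compensating scalings $\overline{u} \mapsto y^{-k}\overline{u}$ and $\zeta_{2, i} \mapsto y^{-1}\zeta_{2, i}$, together with the homogeneity of $u$ and the sewing relation \eqref{rhosew}, conspire so that the prefactor $\rho_i^k$ is unaffected and no spurious power of $y$ survives, leaving only the desired external transformation of the $z_j$ and $v_j$. Since this is the same basis- and $\zeta$-invariance already used to suppress $\zeta_{1, i}, \zeta_{2, i}$ from the notation after \eqref{gendefgen}, once it is invoked the remaining manipulations are the routine conjugation calculation parallel to the proof of Proposition \ref{katas1}.
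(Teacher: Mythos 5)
Your overall strategy (unfold \eqref{gendef}, apply \eqref{loconj} factorwise, pull the operators $y^{L_{W^{(i)}}(0)}$ out through the intertwiners) is the same in spirit as the paper's, but the step you yourself flag as ``the main obstacle'' --- that the internal rescalings come for free, with no spurious power of $y$ and with $\rho_i$ unaffected --- fails under direct computation, and this is a genuine gap. Write one factor of \eqref{gendef} via \eqref{wprop} as
$\rho_i^k\sum_u\langle w_i',\, e^{\zeta_{2,i}L_{W^{(i)}}(-1)}\,Y_{W^{(i)}}(f_i.\overline{u},-\zeta_{2,i})\,\Phi^{(i)}(\ldots;u,\zeta_{1,i})\rangle$.
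After your basis change $u\mapsto y^{L_V(0)}u$, $\overline{u}\mapsto y^{-L_V(0)}\overline{u}$ and the reparametrization $(\zeta_{1,i},\zeta_{2,i})\mapsto(y\zeta_{1,i},y^{-1}\zeta_{2,i})$, the property \eqref{loconj} for $\Phi^{(i)}$ does produce $y^{L_{W^{(i)}}(0)}\Phi^{(i)}(\ldots;u,\zeta_{1,i})$. But moving $y^{L_{W^{(i)}}(0)}$ to the left through $Y_{W^{(i)}}(y^{-L_V(0)}f_i.\overline{u},-y^{-1}\zeta_{2,i})$ by \eqref{aprop}, and through $e^{y^{-1}\zeta_{2,i}L_{W^{(i)}}(-1)}$ using $y^{-L(0)}L(-1)\,y^{L(0)}=y^{-1}L(-1)$, dilates the insertion point a second time and hits the dual vector with a second factor $y^{-L_V(0)}$. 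The outcome is $y^{L_{W^{(i)}}(0)}$ times the original factor evaluated at $(\zeta_{1,i},\,y^{-2}\zeta_{2,i})$ and multiplied by $y^{-2k}$; since $\rho_i^k y^{-2k}=(y^{-2}\rho_i)^k$, this is the product taken at sewing parameter $y^{-2}\rho_i$, not at $\rho_i$.

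In other words, what your mechanism actually yields is the identity ``$\Theta$ at scaled $(v_j,z_j)$ and modulus $\rho_i$ equals $y^{L_W(0)}$ applied to $\Theta$ at unscaled arguments and modulus $y^{-2}\rho_i$'': your two compensations cancel only one of the two $y$-factors, and the leftover is a genuine rescaling of the modulus $\rho_i$, not a change of the splitting $\zeta_{1,i}\zeta_{2,i}=\rho_i$ at fixed $\rho_i$. The splitting invariance you invoke therefore cannot absorb it, because the product is a nontrivial series in $\rho_i$. To close the argument you would need either to prove the proposition in the form where the sewing data scales as well ($\zeta_{a,i}\mapsto y\zeta_{a,i}$, $\rho_i\mapsto y^2\rho_i$), or to exhibit an additional identity killing the factor $y^{2k}$; neither is supplied. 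For comparison, the paper's own proof takes a much shorter and more formal route: it applies \eqref{loconj} only to the external arguments of each $\Phi^{(i)}$, leaves $(u,\zeta_{1,i})$, $\zeta_{2,i}$ and $\rho_i^k$ untouched, and identifies the result with the definition \eqref{musaka} of the conjugated product, delegating the remaining manipulations to the citations of \eqref{locomm}, \eqref{dubay} and \eqref{aprop}; it never attempts the internal rescaling on which your argument hinges, so your gap is not one the paper's proof can be used to patch.
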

\begin{proof}
For $y_i \ne 0$, $1\le i \le l$,  
 due to \eqref{loconj} 
  and \eqref{aprop},  
\begin{eqnarray*}
&& 
 \widehat{\Theta} (f_1, \ldots, f_l; g_1, \ldots, g_l;  
y_1^{L_V(0)} v_1, y_1 \; z_1;  \ldots;  y_l^{L_V(0)} v_{\theta_l-r},  y_l \;  x_{\theta_l-r, l};    
 \rho_1, \ldots, \rho_l)_k 
\nn
&& =  
 \widehat{\mathcal R} 
 \prod_{i=1}^l
\rho^k_i 
 \langle w'_i, Y^{W^{(i)}}_{W^{(i)}V'}  
\left(   
\Phi^{(i)}(g_i; y_i^{L_V{(0)} }  v_{n_i+1},  y_i\; z_{n_i+1}; \ldots;
\right. 
\nn
&&
\left. \qquad \qquad \qquad  \qquad \qquad 
 y_i^{L_V{(0)} }  v_{n_i+k_i-r_i}, y_i\; z_{n_i+k_i-r_i};   
  u,  \zeta_{1, i}), 
  \zeta_{2, i} \right) f_i.\overline{u} \rangle
\nn
&&
=\widehat{\Theta} \left(y_1^{  L_{W^{(1)}}  (0)}, \ldots, y_l^{  L_{W^{(l)}}  (0)}; 
 f_1, \ldots, f_l; g_1, \ldots, g_l; \right. 
\nn
&&
\qquad \qquad \qquad \qquad \qquad \qquad \qquad \left. 
 v_1, z_1;  \ldots;  v_{\theta_l-r}, x_{\theta_l-r}; \rho_1, \ldots, \rho_l\right)_k. 
\end{eqnarray*}
\end{proof}
As an upshot, we obtain the proof of Proposition \ref{glavna} 
by taking into account 
the results of Proposition \eqref{derga},  
Lemma \eqref{tarusa}, Lemma \eqref{baskal}, and Proposition \eqref{katas2}.  
\subsection{Canonical properties of the $\W$-products} 
\label{properties1}
In this Subsection we study  
  properties of  
the products   
$\Theta$ $(f_1$, $\ldots$, $f_l$; $g_1$, $\ldots$, $g_l$;    
$v_1$, $z_1$; $\ldots$; $v_{\theta_l-r}$, $z_{\theta_l-r}$;   
 $\rho_1$, $\ldots$, $\rho_l)_k$  of 
\eqref{gendef}, \eqref{gendefgen}
with respect of changing of formal parameters.  
\begin{proposition}
\label{pupa}
 Under the action  
$(\varrho(z_1), \ldots, \varrho(z_{\theta_l-r}))$
of the group  
$\left( {\rm Aut} \; \Oo\right)^{\times (\theta_l-r)}_{z_1, \ldots, z_{\theta_l-r}}$    
of independent  
$\theta_l-r$-dimensional   
changes of formal parameters 
\begin{eqnarray}
\label{zwrhokl}
&&(z_1, \ldots, z_{\theta_l-r})   
\mapsto 
(\widetilde{z}_1, \ldots, \widetilde{z}_{\theta_l-r})   
= 
(\varrho(z_1), \ldots, \varrho(z_{\theta_l-r})). 
\end{eqnarray}   
the products \eqref{gendef}, \eqref{gendefgen}  
are canonical 
for generic elements $v_j \in V$, $1 \le j \le \theta_l-r$, $l \ge 1$,  
  of a quasi-conformal grading-restricted vertex algebra $V$. 
\end{proposition}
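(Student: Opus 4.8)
The plan is to reduce the canonical property of the product to the already-established canonical property of each individual factor. Since $V$ is quasi-conformal, the action of $\left(\Aut\ \Oo\right)^{\times(\theta_l-r)}$ on a $\W$-valued form is generated, in the sense of \cite{BZF} (Chapter 6), by the exponentiated action of the nonnegative Virasoro modes $L_V(n)$, $n \ge 0$, with the $L_V(-1)$-derivative underlying the differential structure in which the infinitesimal transformations are expanded. Propositions \ref{katas1} and \ref{katas2} already supply the $L_V(-1)$-derivative \eqref{lder1} and $L_V(0)$-conjugation \eqref{loconj} properties for the products \eqref{gendef}, \eqref{gendefgen}, so the remaining task is to propagate the full action of a generic $\varrho$ through the intertwining operators and the regularization.

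First I would observe that the change of parameters \eqref{zwrhokl} acts only on the external parameters $z_1,\ldots,z_{\theta_l-r}$, which enter \eqref{gendefgen} solely as the arguments $(v_{n_i+1},z_{n_i+1};\ldots;v_{n_i+k_i-r_i},z_{n_i+k_i-r_i})$ of the factors $\Phi^{(i)}$ inside the intertwining operator $Y^{W^{(i)}}_{W^{(i)}V'}$; the sewing parameters $\zeta_{1,i},\zeta_{2,i}$ and the pairing with $f_i.\overline{u}$ are untouched. By Proposition \ref{pupa0}, each $\Phi^{(i)}$ is canonical with respect to the independent changes $\varrho_j$ on its own arguments, so $\Phi^{(i)}(g_i;\ldots;\varrho(z_j),\ldots)$ equals the lifted $\left(\Aut\ \Oo\right)$-action applied to $\Phi^{(i)}(g_i;\ldots;z_j,\ldots)$: concretely, the weight-power differential is multiplied by $(\varrho'(z_j))^{\wt(v_j)}$ together with the higher corrections produced by $L_{W^{(i)}}(n)$, $n \ge 1$, exactly as in Proposition \ref{pupa0}.

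Second I would push these corrections through the construction. Because $V$ is quasi-conformal, the intertwining operator $Y^{W^{(i)}}_{W^{(i)}V'}$ intertwines the Virasoro action, so the operators $L_{W^{(i)}}(n)$ generated by $\varrho$ on the inner form commute past $Y^{W^{(i)}}_{W^{(i)}V'}$ in the manner recorded in the Appendix \ref{grading}, producing precisely the canonical transformation law on the outer form. The regularization $\widehat{\mathcal S}$ (respectively $\mathcal R$), being a choice of single-valued branch followed by the analytic continuation established in Proposition \ref{derga}, commutes with these operators, since the analytic continuation of the absolutely convergent series of \eqref{gendefgen} is unique and the operators act coefficientwise. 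Assembling the product over $1 \le i \le l$, the factor $\prod_i\rho_i^k$ is inert under \eqref{zwrhokl}, and one finds that $\Theta$ transforms as a section of the appropriate tensor power of the bundle of weight-power differentials, i.e.\ the product is canonical.

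The main obstacle I expect is the compatibility of the regularization and of the elimination of coinciding parameters with the coordinate change. One must verify that the branch-choice in $\widehat{\mathcal S}$ is not disturbed when the $z_j$ are deformed by generic $\varrho_j$, and that the higher-order terms in $\varrho$, which mix the differential powers $dz_j^{\wt(v_j)}$ through the modes $L_{W^{(i)}}(n)$, $n\ge 1$, do not interfere with the poles at coinciding arguments retained by the $\widehat{}$ operation of Subsection \ref{kapusta}. Here the hypothesis that the $v_j$ be generic is essential: it guarantees that the weights $\wt(v_j)$ avoid the degenerate configurations in which a higher Virasoro correction could collide with a pole of the rational function $\Theta$, so that the canonical transformation law survives termwise.
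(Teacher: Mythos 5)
Your proposal is correct and takes essentially the same route as the paper: reduce the claim to Proposition \ref{pupa0} applied factor-wise, observing that the change \eqref{zwrhokl} touches only the arguments of the $\Phi^{(i)}$ inside the intertwining operators while $\zeta_{1,i}$, $\zeta_{2,i}$, $\rho_i$ and the pairing with $f_i.\overline{u}$ are inert. The paper's proof is shorter because it uses Proposition \ref{pupa0} as literal invariance of the forms, $\Phi^{(i)}(g_i;\ldots;v_j,\widetilde{z}_j;\ldots)=\Phi^{(i)}(g_i;\ldots;v_j,z_j;\ldots)$, so direct substitution into \eqref{gendef}, \eqref{gendefgen} finishes the argument; the machinery you develop for propagating the $L_{W^{(i)}}(n)$ corrections through $Y^{W^{(i)}}_{W^{(i)}V'}$ and the regularization is then unnecessary, since there is nothing left to propagate.
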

\begin{proof}
Due to Proposition \ref{pupa0},  
\begin{eqnarray*}
&&\Phi^{(i)}(g_i; v_{n_i+1}, \widetilde{z}_{n_i+1}; 
 \ldots; v_{n_i+k_i-r_i}, \widetilde{z}_{n_i+k_i-r_i})  
\nn
&&\qquad =\Phi^{(i)}(g_i; v_{n_i+1}, z_{n_i+1}; 
 \ldots; v_{n_i+k_i-r_i, i}, z_{n_i+k_i-r_i}).  
\end{eqnarray*}
\begin{eqnarray*}
&& \Theta(f_1, \ldots, f_l; g_1, \ldots, g_l; 
 v_1,  \widetilde{z}_1; \ldots; v_{\theta_l-r}, 
\widetilde{z}_{\theta_l-r}; \rho_1, \ldots, \rho_l)_k     
\nn
 && =    
 \mathcal R \prod_{i=1}^l \rho^k_i 
 \langle w'_i,  Y^{W^{(i)}}_{W^{(i)}V'}  
\left(  
\Phi^{(i)}(g_i; v_{n_i+1},   \widetilde{z}_{n_i+1};  
\ldots; \right. 
\nn
&&
\left. \qquad \qquad \qquad \qquad \qquad 
v_{n_i+k_i-r_i},  \widetilde{z}_{n_i+k_i-r_i};   
  u,  \zeta_{1, i}),  
   \zeta_{2, i} \right) f_i.\overline{u} \rangle 
\end{eqnarray*}
\begin{eqnarray*}
 && =  
\mathcal R \prod_{i=1}^l \rho^k_i 
 \langle w'_i,  Y^{W^{(i)}}_{W^{(i)}V'}  
\left(  
\Phi(g_i; v_{n_i+1,i},   z_{n_i+1};  
\ldots;
\right. 
\nn
&&
\left. \qquad \qquad \qquad \qquad \qquad
v_{n_i+k_i-r_i},  z_{n_i+k_i-r_i};   
  u,  \zeta_{1, i}),  
   \zeta_{2, i} \right) f_i.\overline{u} \rangle 
\nn
&& 
=  \Theta(f_1, \ldots, f_l; g_1, \ldots, g_l;   
 v_1,  z_1; \ldots; v_{\theta_l-r}, z_{\theta_l}; \rho_1, \ldots, \rho_l).   
\end{eqnarray*}
 The products \eqref{gendef}, \eqref{gendefgen} 
are therefore invariant under \eqref{zwrho}. 
\end{proof}
\section{Spaces for families of complexes} 
\label{spaces}
In this Section we introduce the definition of spaces  
for the families of complexes 
associated to a grading-restricted vertex algebra $V$
 $V$-modules suitable for the construction of 
a codimension one foliation  
cohomology defined on a complex curve.  
 Several grading-restricted generalized modules $W^{(i)}$ 
as well as the corresponding spaces $\W^{(i)}_{x_{1, i}, \ldots, x_{k_i, i}}$ 
are involved in the constructions of this paper. 

 Consider a configuration of $2l$ sets of vertex algebra $V$ elements, 
$(v_{1, i}, \ldots, v_{k_i, i})$,  
  $(v'_{1, i}, \ldots, v'_{m_i, i})$, 
$1 \le i \le l$, 
  and points 
 $(p_{1, i}, \ldots, p_{k_i, i})$, 
$(p'_{1, i}, \ldots, p'_{m_i, i})$,   
 with the local coordinates 
$(c_{1, i}(p_{1, i}), \ldots, c_{k_i, i}(p_{k_i, i}))$ 
$(c_{1, i}(p'_{1, i}), \ldots, c_{k_i, i}(p'_{m_i, i}))$ 
 taken on the intersection of the $i$-th leaf of the leaves space $M/\F$ 
with the $j$-th transversal section $U_j \in \U$, $j\ge 1$, 
of a foliation $\F$ transversal basis $\U$ on a complex curve. 
%
Denote by $C^{k_i}_{(m_i)}\left(V, \W^{(i)}, \F\right)$ $(U_{p, i})$, 
$0 \le p \le m_i$, $k_i \ge 1$, $m_i \ge 0$,         
the space of all linear maps \eqref{maps}. 
 $\Phi: V^{\otimes k_i } \rightarrow 
\W^{(i)}_{ c_{1, i}(p_{1, i}), \ldots, c_{k_i, i}(p_{k_i, i}); \atop
c_{1, i}(p'_{1, i}), \ldots, c_{k_i, i}(p'_{m_i, i})}$,     
adapted transversal to 
 $m_i$ of vertex operators \eqref{poper} 
equipped with the formal parameters identified 
with the local coordinates $c'_{j, i}(p'_{j, i})$ around the points $p'_{j, i}$  
on each of the transversal sections $U_j$, $1 \le j \le m_i$.   

We assume that each section of a transversal basis  
$\U$ has a coordinate chart 
 induced by a coordinate chart of $M$ \cite{CM}.   
%
A holonomy embedding 
maps  
a coordinate chart on the first section 
into a coordinate chart on the second transversal section, 
and a section into another section of a transversal basis.  
Let us now introduce the following spaces for the families of complexes
associated with grading-restricted generalized $V$-modules.  
This definition is motivated by 
the definition of the spaces for ${\rm \check C}$ech-de Rham complex in \cite{CM}. 

For $k_i\ge 0$, $m_i \ge 0$,   
 introduce the spaces   
\begin{equation}
\label{ourbicomplex}
 C^{k_i}_{m_i}\left(V, \W^{(i)}, \U, \F\right) =  \bigcap_{ 
U_1 \stackrel{h_1, i} {\hookrightarrow }  
\ldots \stackrel {h_{p-1, i}}{\hookrightarrow } U_{p,i},    
\;  1 \le p \le m_i }  
 C^{k_i}_{(m_i)}\left(V, \W^{(i)}, \F\right) (U_{p, i}),    
\end{equation}
where the intersection ranges over all possible $(p-1, i)$-tuples 
of holonomy embeddings $h_{p, i}$, $1 \le p \le  m_i-1$,  
between transversal sections of a basis $\U$  for $\F$. 

We skip $\F$ from further  
notations of complexes 
since a foliation $\F$ is fixed in our considerations. 
\subsection{Properties of spaces for families of complexes} 
\label{noprop}
In \cite{zucmp} we have proven the following facts about 
spaces for families of vertex algebra complexes for foliations.  
The spaces \eqref{ourbicomplex} are non only zero spaces. 
The family \eqref{ourbicomplex} is the transversal basis $\U$ independent.   
According to that, 
 we will denote  $C^{k_i}_{m_i}\left(V, \W^{(i)}, \U, \F\right)$ as 
$C^{k_i}_{m_i}\left(V, \W^{(i)} \right)$ in what follows. 
In the Appendix the definition of 
 a quasi-conformal 
grading-restricted vertex algebra is given.   
The following Proposition was proven in \cite{zucmp}.  
The construction \eqref{ourbicomplex} is canonical, i.e., 
 does not depend on the foliation preserving choice of local coordinates on $M/\F$        
for a quasi-conformal grading-restricted vertex algebra $V$ 
and its grading-restricted generalized  
modules $W^{(i)}$, $1 \le i \le l$.    

In what follows,  
we will always assume the quasi-conformality \cite{BZF} 
of $V$ for the spaces \eqref{ourbicomplex}.
The condition is necessary in the proof of elements invariance of the spaces   
$\W^{(i)}_{z_{1, i}, \ldots, z_{k_i, i}}$, $1 \le i \le l$,  
with respect to a vertex algebraic representation 
(cf. the Appendix) of the group 
$\left({\rm Aut}\; \Oo\right)^{\times k_i}_{z_{1, i}, \ldots, z_{k_i, i} }$.  

Let $W^{(i)}$, $1 \le i \le l$ be a set of grading-restricted generalized $V$ modules.  
Due to the definition 
of the adapted transversal,
 with $k_i=0$ the maps $\Phi^{(i)}$ do not include variables. 
 Let us set 
$C_{m_i}^0\left(V, \W^{(i)}\right)= W^{(i)}$,     
for $m_i\ge 0$. 
 According to the definition, 
such mappings 
are assumed to be adapted transversal 
 to a number of vertex operators depending on local coordinates 
of $m_i$ points on $m_i$ transversal sections.  
%
In \cite{zucmp} we proved that  
\begin{equation}
\label{susus}
C_{m_i}^{k_i}\left(V, \W^{(i)}\right)\subset C_{m_i-1}^{k_i}\left(V, \W^{(i)}\right).       
\end{equation}
\subsection{Connections as coboundary operators}
\label{cobcob}
In this Subsection we introduce the coboundary operators acting on
 the families of spaces \eqref{ourbicomplex}.  
Consider the vector of $E$-operators: 
\begin{eqnarray}
\label{mathe}
\mathcal E^{(i)} &=&  \left( E^{(1)}_{W^{(i)}}.,\; 
\sum\limits_{j=1}^n (-1)^j \; E^{(2)}_{V; \one_V}(j)., 
 \; E^{W^{(i)}; (1)}_{W^{(i)} V'}. \right).     
\end{eqnarray} 
The definition of the $E$-operators given in the Appendix.  
 When acting on 
a map $\Phi^{(i)} \in C^{k_i}_{m_i}\left(V, \W^{(i)} \right)$, 
each entry of \eqref{mathe} increases the number of the vertex algebra elements 
$(v_{1, i}, \ldots, v_{k_i, i})$ 
with a vertex algebra element $v_{k_i+1, i}$.   
According to Proposition of \cite{Huang}  
the number of adapted transversal vertex operators 
with the vertex algebra elements $(v'_{1, i}, \ldots, v'_{m_i, i})$ 
decreases to $(m_i-1)$  
as the result of the action of each entry of \eqref{mathe} on $\Phi^{(i)}$.   

The coboundary operators  
$\delta^{k_i}_{m_i}$ acting on elements 
$\Phi^{(i)} \in C^{k_i}_{m_i}$ $\left(V\right.$, $\left.\W^{(i)} \right)$
 of the families of spaces \eqref{ourbicomplex},  
are defined by 
\begin{equation} 
\label{deltaproduct1}
\delta^{k_i}_{m_i} \Phi^{(i)}={\mathcal E^{(i)}}.\Phi^{(i)}.     
\end{equation} 

Here $.$ represents the action of each element of ${\mathcal E}^{(i)}$ 
of the vector on a single element $\Phi^{(i)}$. 
Note that 
${\mathcal E^{(i)}}.\Phi^{(i)} \in C^{k_i+1}_{m_i-1}\left(V, \W^{(i)} \right)$
due to \eqref{mathe} and \eqref{deltaproduct1}. 
A vertex operator added by $\delta^{k_i}_{m_i}$ has a formal parameter associated with 
an extra point $p_{k_i+1, i}$ on $M$ with a local coordinate $c_{k_i+1}(p_{k_i+1, i})$. 
 The right hand side of \eqref{deltaproduct1} 
is adapted transversal to $m_i-1$ vertex operators.  
Let us mention, that the foliation cohomology
is affected by the particular choice of 
$m_i$ vertex operators excluded.   
In \cite{zucmp} we proved 
\begin{lemma}
For arbitrary $w'_i \in W'_i$ dual to $W^{(i)}$,  
the definition \eqref{deltaproduct1} is equivalent to 
 a multi-point vertex algebra connection 
\begin{equation}
\label{deltaproduct}
\delta^{k_i}_{m_i} \Phi^{(i)}(g_i; v_{1, i}, x_{1, i}; \ldots; v_{1, i}, x_{1, i}) 
 = G(g; p_{1, i}, \ldots, p_{k_i+1, i}).     
\end{equation}
\hfill $\square$
\end{lemma}
The explicit form of $G(g; p_{1, i}, \ldots, p_{k_i+1, i})$ was derived in \cite{zucmp}. 
According to the construction of the families of complexes spaces \eqref{ourbicomplex}   
the action 
 of $\delta^{k_i}_{m_i}$ on an element of 
$C^{k_i}_{m_i}\left(V, \W^{(i)} \right)$ give rise a coupling  
as differential forms of $\W^{(i)}_{x_{1, i}, \ldots, x_{k_i, i}}$. 
These are the vertex operators with the local coordinates 
$c_{j, i}(z_{p_{j, i}})$, $0 \le j \le m_i$,   
at the vicinities of the same points $p_{j, i}$
 taken on transversal sections for $\F$,   
with elements of $C^{k_i}_{m_i-1}\left(V, \W^{(i)} \right)$
considered at the    
points 
$c_{j, i}(z_{p_j, i})$, $0 \le j \le n$ on $M$  
at $p_{j, i}$. 

 There exists an additional family of exceptional short complexes 
 which we call the family of transversal connection complexes 
in addition to the families of complexes  
$\left(C^{k_i}_{m_i} \left(V \right. \right.$, 
$\left. \W^{(i)} \right)$, $\left. \delta^{k_i}_{m_i}\right)$   
 given by \eqref{ourbicomplex}   
and \eqref{deltaproduct}. 
In \cite{zucmp} we proved 
\begin{lemma}
\label{lemmo}
 For $k_i=2$, and $m_i=0$, there exist subspaces   
$C_{m_i}^{2, i}$ $\left(V, \W^{(i)}  \right)$  
 $\subset$ $C^{0, i}_{ex}$ $\left(V, \W^{(i)} \right)$ 
$\subset C_{0, i}^2\left(V, \W^{(i)}\right)$,    
 for all $m_i \ge 1$, with the action of the coboundary operator  
$\delta^{2, i}_{m_i}$ defined by \eqref{deltaproduct}.   
\hfill $\square$
\end{lemma}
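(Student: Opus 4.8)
The plan is to construct $C^{0,i}_{ex}$ explicitly as the subspace of $C^2_{0,i}\left(V, \W^{(i)}\right)$ singled out by the transversal connection condition of Definition \ref{transcon}, and then to pin it between the two families by invoking the nesting of Lemma \ref{subset}. Concretely, I would declare $\Phi^{(i)} \in C^{0,i}_{ex}\left(V, \W^{(i)}\right)$ precisely when $\Phi^{(i)} \in C^2_{0,i}\left(V, \W^{(i)}\right)$ and the connection form $\delta^{2,i}_0 \Phi^{(i)}$ produced by \eqref{deltaproduct}, \eqref{hatdelta1} is a transversal connection, i.e. the antisymmetric combination $f(\psi(p'))\,\mathcal{G}(\phi(p)) + f(\phi(p))\,\mathcal{G}(\psi(p'))$ vanishes. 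With this definition the right inclusion $C^{0,i}_{ex}\left(V, \W^{(i)}\right) \subset C^2_{0,i}\left(V, \W^{(i)}\right)$ holds by construction.

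For the left inclusion I would first apply Lemma \ref{subset} iteratively to obtain $C^{2,i}_{m_i}\left(V, \W^{(i)}\right) \subset C^{2,i}_1\left(V, \W^{(i)}\right) \subset C^2_{0,i}\left(V, \W^{(i)}\right)$ for every $m_i \ge 1$, so that any such $\Phi^{(i)}$ is a bona fide $2$-cochain on which $\delta^{2,i}_0$ is defined; it then suffices to treat the largest case $m_i = 1$. The remaining point is to show that regularized transversality to a single vertex operator, encoded by the pole-order bounds $N^{2}_{1}(v_p,v_{p'})$ on the sums \eqref{Inm} of Definition \ref{composabilitydef}, forces the three terms of \eqref{hatdelta1} to assemble into a vanishing transversal combination. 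Here I would rewrite $\delta^{2,i}_0 \Phi^{(i)}$ through the intertwining-operator representation used in the proof of the preceding Lemma, so that the two boundary terms carrying $\omega_{W^{(i)}}$ can be commuted past $\Phi^{(i)}$ using the $L_V(-1)$-derivative property \eqref{lder1}; transversality to one operator then guarantees that the symmetric pairing is regular across the diagonal $c_{1,i}(p_{1,i}) = c_{2,i}(p_{2,i})$, and, being a connection form of the excised type, must vanish.

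The main obstacle will be this last step: converting the analytic pole-order restriction \eqref{Inm} into the exact algebraic vanishing of Definition \ref{transcon}, since the two live on different footings and the match is what makes the short complex genuinely transversal. Once it is in place, I would close by verifying that $\delta^{2,i}_{m_i}$ is well defined on $C^{0,i}_{ex}$: because $\mathcal{E}^{(i)}.\Phi^{(i)} \in C^{3}_{m_i-1}\left(V, \W^{(i)}\right)$ by the remark following Definition \ref{codeff}, the coboundary carries the exceptional space into the next term of the family of transversal connection complexes, so the chain $C^{2,i}_{m_i} \subset C^{0,i}_{ex} \subset C^2_{0,i}$ is compatible with $\delta^{2,i}_{m_i}$ for all $m_i \ge 1$.
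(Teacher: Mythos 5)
There is a genuine gap, and it lies exactly where you flagged it yourself — but it is not merely a technical obstacle, it is a sign that your definition of the exceptional space is not the one the lemma needs. You define $C^{0,i}_{ex}\left(V, \W^{(i)}\right)$ by requiring that $\delta^{2,i}_{0}\Phi^{(i)}$ be a transversal connection in the exact algebraic sense of Definition \ref{transcon}, i.e.\ that the combination $f(\psi(p'))\,\mathcal G(\phi(p)) + f(\phi(p))\,\mathcal G(\psi(p'))$ vanish identically. The paper's exceptional space is defined by a strictly weaker, purely analytic condition: one splits the four terms of \eqref{hatdelta1} into two pairs and requires that the resulting connection forms $G_{1,i}$ of \eqref{pervayaforma} and $G_{2,i}$ of \eqref{vtorayaforma} (written with projections $P_{r_i}$ and an arbitrary auxiliary parameter $\zeta_i$) be absolutely convergent in the indicated regions and analytically extendable to rational functions with poles only at $c_{1,i}(p_{1,i})=0$, $c_{2,i}(p_{2,i})=0$ and $c_{1,i}(p_{1,i})=c_{2,i}(p_{2,i})$. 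With that definition the left inclusion $C^{2}_{m_i}\left(V,\W^{(i)}\right) \subset C^{2,i}_{ex}\left(V,\W^{(i)}\right)$ is an immediate consequence of Proposition \ref{comp-assoc}: regularized transversality to $m_i \ge 1$ vertex operators delivers precisely the convergence and rationality of these compositions. No vanishing statement is ever proved, nor could it be: the pole-order bounds of \eqref{Inm} control analyticity, and there is no mechanism by which they force the algebraic identity $G_{tr}(p,p')=0$. The paper's closing remark — that \eqref{pervayaforma} and \eqref{vtorayaforma} are \emph{sums of} forms $G_{tr}(p,p')$ of transversal connections — identifies their structure; it does not assert they vanish.

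The failure of your definition is not only that the key step is unprovable; it would also defeat the purpose of the lemma. The exceptional space exists so that the modified coboundary $\delta^{2,i}_{ex,i}$ of \eqref{halfdelta}, \eqref{ghalfdelta1} is well defined on it, producing the nontrivial short complex \eqref{hat-complex-half} and ultimately the second cohomology $H^{2,i}_{ex,i}$. If membership in the exceptional space already forced the transversal part of the coboundary to vanish identically, the operator $\delta^{2,i}_{ex,i}$ would be degenerate on exactly the space where it is supposed to act, and the invariants of Theorem \ref{talasa} built from the short complex would collapse. Your use of Lemma \ref{subset} to chain $C^{2}_{m_i} \subset C^{2}_{m_i-1} \subset \cdots \subset C^{2}_{0,i}$ is correct and harmless, but the middle space must be carved out by the convergence condition on the separated halves $G_{1,i}$, $G_{2,i}$, not by the vanishing condition of Definition \ref{transcon}.
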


 The coboundary operators  
\begin{equation}
\label{halfdelta}
\delta^{2, i}_{ex, i}: C_{ex, i}^{2, i}\left(V, \W^{(i)} \right) 
\to C_{0, i}^{3, i}\left(V, \W^{(i)} \right), 
\end{equation}
are defined 
 by the corresponding 
three point connections.  
In \cite{zucmp} we proved  
\begin{proposition}
\label{cochainprop}
The operators \eqref{deltaproduct} and 
\eqref{halfdelta} form the cochain complexes    
\begin{equation}
\label{conde}
\delta^{k_i}_{m_i}: C_{m_i}^{k_i}\left(V, \W^{(i)} \right)   
\to C_{m_i-1}^{k_i+1}\left(V, \W^{(i)} \right),   
\end{equation}  
\begin{equation}
\label{deltacondition}  
\delta^{k_i+1}_{m_i-1} \circ \delta^{k_i}_{m_i}=0,  
\end{equation} 
\begin{equation}
\label{porto1}
\delta^{2, i}_{ex, i} \circ \delta^{1, i}_{2, i}=0,  
\end{equation}
\begin{equation}
\label{hat-complex}
0\longrightarrow C_{m_i}^0\left(V, \W^{(i)} \right)  
\stackrel{\delta^0_{m_i} }{\longrightarrow}
 C_{m_i-1}^1\left(V, \W^{(i)} \right)    
\stackrel{\delta^1_{m_i-1}}{\longrightarrow}\ldots 
\stackrel{\delta^{m_i-1}_1}{\longrightarrow}
 C_0^{m_i}\left(V, \W^{(i)} \right)\longrightarrow 0,  
\end{equation}
\begin{eqnarray}
\label{hat-complex-half}
0\longrightarrow C^{0, i}_{3, i}\left(V, \W^{(i)} \right) 
\stackrel{ \delta_{3, i}^{0, i} }{\longrightarrow} 
 C^{1, i}_{2, i}\left(V, \W^{(i)} \right) 
\stackrel{ \delta_{2, i}^{1, i} }{\longrightarrow} 
C_{ex, i}^{2, i}\left(V, \W^{(i)} \right)  &&
\nn
\label{shorta} 
\stackrel{\delta_{ex}^2}{\longrightarrow}  
 C_{0, i}^{3, i}\left(V, \W^{(i)} \right)\longrightarrow 0, &&
\end{eqnarray}
with the spaces \eqref{ourbicomplex}.  
\noindent
With  
$\delta_{2, i}^{1, i} C_{2, i}^{1, i}\left(V, \W^{(i)} \right) \subset  
 C_{1, i}^{2, i}\left(V, \W^{(i)} \right)\subset  
 C_{ex, i}^{2, i}\left(V, \W^{(i)} \right)$,    
$\delta^{2, i}_{ex, i}\circ  \delta^{1, i}_{2, i} 
= \delta^{2, i}_{1, i}\circ  \delta^{1, i}_{2, i}   
=0$. 
\hfill $\square$ 
\end{proposition}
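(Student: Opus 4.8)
The plan is to verify, in order, the mapping property \eqref{conde}, the nilpotency relations \eqref{deltacondition} and \eqref{porto1}, and then to assemble the two complexes \eqref{hat-complex} and \eqref{shorta}. First I would observe that the mapping property is essentially immediate from the construction: as noted after Definition \ref{codeff}, each entry of the vector $\mathcal E^{(i)}$ in \eqref{mathe} augments the number of vertex algebra arguments from $k_i$ to $k_i+1$, while Proposition \ref{comp-assoc} guarantees that the regularized-transversal count drops from $m_i$ to $m_i-1$. Together with the absolute-convergence and rationality preservation established in Lemmas \ref{empty} and \ref{subset}, this places $\delta^{k_i}_{m_i}\Phi^{(i)}$ in $C^{k_i+1}_{m_i-1}\left(V, \W^{(i)}\right)$.

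The central step is the nilpotency \eqref{deltacondition}. I would write out the composite $\delta^{k_i+1}_{m_i-1}\circ \delta^{k_i}_{m_i}\Phi^{(i)}$ explicitly by substituting the connection form \eqref{hatdelta1} into itself, and then show that the resulting terms cancel in pairs. The mechanism is the associativity of iterated vertex operators, supplied by the composability of the $E$-operators (Proposition \ref{comp-assoc}), combined with the $L_V(-1)$-derivative property \eqref{lder1}, which interchanges the translation operators $e^{c(p) L_{W^{(i)}}(-1)}$ with the arguments of $\omega_V$. The alternating signs $(-1)^j$ are arranged precisely so that the combinatorics mirrors the simplicial and de~Rham differential: a term in which the insertions at positions $j$ and $j'$ are successively contracted appears twice with opposite sign, once from each order of contraction, and the two copies agree by associativity.

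The relation \eqref{porto1} for the transversal-connection complex is handled by the same mechanism restricted to the exceptional subspaces $C^{2,i}_{ex,i}\left(V, \W^{(i)}\right)$ isolated in Lemma \ref{lemmo}. Here I would use the explicit three-point form \eqref{ghalfdelta1} of $\delta^{2,i}_{ex,i}$ from Definition \ref{cobop} together with the transversal-connection forms \eqref{pervayaforma} and \eqref{vtorayaforma}; the composition with $\delta^{1,i}_{2,i}$ again produces cancelling pairs, because \eqref{pervayaforma} and \eqref{vtorayaforma} are precisely sums of transversal connection forms $G_{tr}(p,p')$, whose defining relation forces the leading terms to vanish. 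Once \eqref{deltacondition} and \eqref{porto1} are in hand, the complex structures \eqref{hat-complex} and \eqref{shorta} follow immediately from the mapping property, since each consecutive composite in these sequences is one of the verified nilpotency relations.

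The main obstacle will be the associativity bookkeeping in the nilpotency step, and in particular verifying that the pairwise cancellation survives the regularization $R$ and the analytic continuation to rational functions. The danger is that the two copies of a given term, while formally equal by associativity, might a priori be defined on different domains of convergence before continuation; the resolution is that Proposition \ref{comp-assoc} guarantees a common analytic continuation to a single rational function with poles only at coinciding insertion points, so the formal cancellation is genuine after regularization. For this delicate point I would invoke the machinery of composable maps developed in \cite{Huang}, which establishes exactly the associativity of such regularized compositions of $E$-operators.
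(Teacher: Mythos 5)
Your proposal is correct and follows the same skeleton as the paper's proof: the mapping property \eqref{conde} from the $E$-operator structure of $\mathcal E^{(i)}$ together with Proposition \ref{comp-assoc}, then nilpotency by pairwise cancellation of the expanded composite, then assembly of the two complexes. The one substantive difference is in which result you invoke for the cancellation. The paper expands $\delta^{k_i+1}_{m_i-1}\circ\delta^{k_i}_{m_i}\Phi^{(i)}$ into the sum of iterated $E$-operator insertions \eqref{nosorog} and kills it using the commutation relations of the $E$-operator algebra, Lemma \ref{slavye}; your proposal attributes the cancellation to ``associativity'' supplied by Proposition \ref{comp-assoc} plus the $L_V(-1)$-derivative property. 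Proposition \ref{comp-assoc} only guarantees that the composites are again regularized transversal (i.e.\ well defined and rationally extendable); it does not by itself provide the identities equating the two orders of insertion with opposite signs --- those identities are precisely the content of Lemma \ref{slavye}, so that is the citation your cancellation step actually needs, even though your verbal description of the mechanism (each doubly-contracted term appearing twice with opposite sign) is exactly right. A second, smaller difference concerns \eqref{porto1}: you argue directly with the transversal-connection forms \eqref{pervayaforma}, \eqref{vtorayaforma} from Lemma \ref{lemmo}, whereas the paper reduces \eqref{porto1} to the already-proven \eqref{deltacondition} via the inclusions $\delta^{1,i}_{2,i}C^{1,i}_{2,i}\left(V,\W^{(i)}\right)\subset C^{2,i}_{1,i}\left(V,\W^{(i)}\right)\subset C^{2,i}_{ex,i}\left(V,\W^{(i)}\right)$, which is cleaner and avoids re-running the cancellation argument. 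Your explicit attention to whether the pairwise cancellation survives regularization and analytic continuation is a point the paper passes over silently (deferring to \cite{Huang}), and is a worthwhile addition.
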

 The cohomology series 
 $H^{k_i}_{m_i}\left(V, \W^{(i)}, \F\right)$ of $M/\F$ 
with coefficients in     
$\W^{(i)}_{z_1, \ldots, z_n}$
 containing maps adapted transversal
 to $m_i$ vertex operators on $m_i$ transversal sections, 
as the factor space 
 $H_{m_i}^{k_i}\left(V, \W^{(i)}, \F\right)
= {\mathcal Con}_{m_i; \; cl}^{k_i}/G^{k_i-1}_{m_i+1}$.      
of closed 
multi-point connections with respect to the space of connection forms. 
  It is easy to see that the definition of cohomology in terms of 
 multi-point connections 
is equivalent to the standard cohomology definition  
 $H_{m_i}^{k_i}\left(V, \W^{(i)}, \F\right)=
 \mbox{\rm Ker} \; \delta^{k_i}_{m_i}/\mbox{\rm Im}\; \delta^{k_i-1}_{m_i+1}$.    
\section{Sequences of multiple products for complexes} 
\label{productc}
In this Section 
the material of Section \ref{product} 
is applied to   
 the families of cochain complex spaces $C^{k_i}_{m_i}(V, \W^{(i)})$ 
 defined in Section \ref{spaces} 
for a foliation $\F$ on a complex curve. 
We introduce the product of a few cochain  complex spaces  
with the image in another cochain  complex space coherent with respect 
to the original coboundary operators \eqref{deltaproduct} and \eqref{halfdelta}, 
and the symmetry property \eqref{shushu}.
 We prove the canonical property of the product, 
 and derive an analogue of Leibniz formula. 
\subsection{Sequences of multiple products defined for foliation complexes}  
\label{adapta}
In this Subsection
 we extend the definition 
of the $\W^{(i)}_{z_1, \ldots, z_n}$-spaces multiple 
product to  
$C^{k_i}_{m_i}\left(V, \W^{(i)} \right)$-spaces 
for a codimension one foliation on a complex curve.  
Recall the definition \eqref{ourbicomplex} of 
$C^{k_i}_{m_i}\left(V, \W^{(i)} \right)$-spaces given in Section \ref{spaces}.  
In order to introduce the product 
of a few elements $\Phi^{(i)} \in C^{k_i}_{m_i}\left(V, \W^{(i)} \right)$ 
that belong to    
several cochain complex spaces \eqref{ourbicomplex} for a foliation $\F$
We then use the geometric multiple $\rho$-scheme
 of a Riemann surface self-sewing.  
We assume that each of the cochain  complex spaces
 $C^{k_i}_{m_i}\left(V, \W^{(i)} \right)$ 
is considered on the same fixed transversal basis $\U$
since the construction is again local. 
Moreover, we assume that the marked points used in the definition \eqref{ourbicomplex} of 
the spaces $C^{k_i}_{m_i}\left(V, \W^{(i)} \right)$  
are chosen on the same transversal section.  
 %
 Recall the setup for a few cochain complex spaces  
$C^{k_i}_{m_i}\left(V, \W^{(i)} \right)$.  
Let $(p_{1, i}, \dots, p_{k_i, i})$, $1 \le i \le l$,       
be sets of points with the local coordinates 
$(c_{1, i}(p_{1, i})$, $ \dots $, $c_{k_i, i}(p_{k_i, i}))$    
 taken on the $j$-th transversal section $U_{j, i} \in \U$, $j\ge 1$,    
of the transversal basis $\U$.  
For $k_i \ge 0$,    
 let $C^{k_i}_{(m_i)}\left(V, \W^{(i)} \right)(U_m)$, $0 \le j \le m$,       
be as before the spaces of all linear maps \eqref{maps} 
\begin{eqnarray}
\label{mapy11}
 && \Phi^{(i)}: V^{\otimes k_i} \rightarrow 
\W^{(i)}_{c_{1, i}(p_{1, i}), \dots, c_{k_i, i}(p_{k_i, i}) 
c_{1, i}(p_{1, i}), \dots, c_{k_i, i}(p_{k_i, i}); 
\atop 
c_{1, i}(p'_{1, i}), \dots, c_{m_i, i}(p'_{m_i, i}) 
c_{1, i}(p'_{1, i}), \dots, c_{k_i, i}(p'_{m_i, i})},     
\end{eqnarray}
adapted transversal to 
 vertex operators
 \eqref{poper} with the formal parameters identified 
with the local coordinate functions $c'_{j, i}(p'_{j, i})$ around points $p_{j, i}$,  
on each of the transversal sections $U_{j, i}$, $1 \le j \le l_1$, $1 \le i \le l$.     
According to the definition \eqref{ourbicomplex}, 
 for $k_i\ge 0$, $1 \le m_i \le l_1$,  
the spaces $C^{k_i}_{m_i}(V, \W^{(i)})$ are:    
\begin{equation}
\label{ourbicomplex111}
 C^{k_i}_{m_i}\left(V, \W^{(i)} \right) =  \bigcap_{  
U_1 \stackrel{h_1, i} {\hookrightarrow }  
\ldots \stackrel {h_{m_i-1, i}}{\hookrightarrow } U_{m_i, i}, \;       
1 \le i \le m_i}  
 C^{k_i}_{(m_i)}\left(V, \W^{(i)} \right) (U_{j, i}),    
\end{equation}
where the intersection ranges over all possible $m_i$-tuples of the holonomy embeddings  
$h_{j, i}$, $1 \le j \le m_i-1$,    
between the transversal sections $(U_{1, i}, \ldots, U_{m_i, i})$ of the basis $\U$ for $\F$.  
Let $t$ be the number of the coinciding vertex operators for the mappings  
that are adapted transversal to
$\Phi^{(i)}(g_i; v_{1, i}, x_{1, i}$;  $\ldots$; $v_{k_i, i}, x_{k_i, i})  
\in C^{k_i}_{m_i}\left(V, \W^{(i)} \right)$, $1 \le i \le l$.    
Denote $\mu_i=m_1+\ldots+m_i$. 
%
Elements $\Phi^{(1, \ldots, l)}$ of the tensor product
 $\W^{(1, \ldots, l)}_{z_1, \ldots, z_{\theta_l-r}}$ 
correspond to the choice of a set of leaves of $M/\F$. 
Thus, the collection of matrix elements of \eqref{ourbicomplex111}
 identifies the space 
$C_{\mu_l - t}^{\theta_l-r}\left(V, \W^{(1, \ldots, l)} \right)$. 
 Let us formulate the main proposition of this Section.  
\begin{proposition}
\label{tolsto}
For $\Phi^{(i)}(g_i; v_{1, i}, x_{1, i}; \ldots; v_{k_i, i}, x_{k_i, i})   
\in C_{m_i}^{k_i}  \left(V, \W^{(i)} \right)$     
the sequence of products \eqref{gendef}  
$\widehat{\Theta}$ $\left(f_1, \ldots, f_l; 
g_1, \ldots, g_l; v_{1, 1}, x_{1, 1};  \ldots; v_{k_l, l},  x_{k_l, l};      
 \rho_1, \ldots, \rho_l; \zeta_{1, i}, \zeta_{2, i} \right)_k$ \eqref{bardos}    
belongs to the space 
$C^{\theta_l-r}_{\mu_l - t}$  $\left(V, \W^{(1, \ldots, l)}, \right)$, i.e.,  
\begin{equation}
\label{toporno}
\cdot_{\rho_1, \ldots, \rho_l}:
 \times_{i=1}^l C^{k_i}_{m_i}\left(V,\W^{(i)} \right)  
\to C_{\mu_l - t}^{\theta_l-r}\left(V, \W^{(1, \ldots, l)} \right).    
\end{equation}
\end{proposition}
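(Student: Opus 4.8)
The plan is to build on Proposition \ref{glavna} and reduce the membership in $C^{\theta_l-r}_{\mu_l-t}(V,\W^{(1,\ldots,l)})$ to the three defining requirements of Definition \ref{defspace}: that the product be a linear map of the correct arity into the correct $\W$-space, that it be regularized transversal to the prescribed number of vertex operators, and that it lie in the intersection over holonomy embeddings of a fixed transversal basis. First I would dispose of the $\W$-space membership. By Proposition \ref{glavna} the product $\cdot_{\rho_1,\ldots,\rho_l}(\Phi^{(1)},\ldots,\Phi^{(l)})$ already defines an element of $\W^{(1,\ldots,l)}_{z_1,\ldots,z_{\theta_l-r}}$; in particular it is a map $V^{\otimes(\theta_l-r)}\to\W^{(1,\ldots,l)}_{z_1,\ldots,z_{\theta_l-r}}$ in $\theta_l-r$ variables, the correct arity after elimination of the $r$ coinciding formal parameters by $\widehat{\mathcal S}$. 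The $L_V(-1)$-derivative and $L_V(0)$-conjugation properties follow from Propositions \ref{katas1} and \ref{katas2}, and the symmetry property \eqref{shushu} from Lemma \ref{tarusa}, so the arity count $\theta_l-r$ and the analytic form of the target space are settled.

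The substantive step is the regularized transversality to $\mu_l-t$ vertex operators. Here I would compose $\widehat{\Theta}$ with the $\mu_l-t$ vertex operators obtained from the union of the $m_i$ transversal operators of the individual factors after the $t$ coincidences are removed by the elimination (hat) operation of Subsection \ref{kapusta}. Because the sewing elements $u\in V_{(k)}$, $\overline{u}\in V'$ are inserted only at the sewing coordinates $\zeta_{1,i},\zeta_{2,i}$, which are separate from the coordinates of the transversal operators, the composition factorizes by the index $i$ through each $\Phi^{(i)}$ up to the coincidence bookkeeping; the regularized transversality of each $\Phi^{(i)}$ to its $m_i$ operators (Definition \ref{composabilitydef}) then supplies, for each factor, the absolute convergence of the series $\mathcal R^{1,\cdot}_{m_i}$, $\mathcal R^{2,\cdot}_{m_i}$ and the analytic extension to rational functions with pole orders bounded by integers depending only on the relevant $v_i,v_j$. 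Combining these with the absolute convergence of the product in $(\rho_1,\ldots,\rho_l)$ established in Proposition \ref{derga}, I would produce the global bounds $N^{\theta_l-r}_{\mu_l-t}(v_i,v_j)$ and verify that the convergence domains \eqref{granizy1}, \eqref{granizy2} for the composed expression are nonempty and compatible with the sewing region $|\zeta_{a,i}|\le r_{a,i}$, $|\rho_i|\le r_i$.

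Finally, the intersection over holonomy embeddings is the least delicate point: as emphasised in Subsection \ref{adapta}, all factors $C^{k_i}_{m_i}(V,\W^{(i)})$ are taken on the same fixed transversal basis $\U$ with marked points on common transversal sections, so the holonomy-embedding intersections composing each factor via \eqref{ourbicomplex} intersect to the intersection defining $C^{\theta_l-r}_{\mu_l-t}(V,\W^{(1,\ldots,l)})$. Invariance under the basis (Lemma \ref{nezu}) together with invariance under changes of coordinates (Proposition \ref{pupa}) then yields \eqref{toporno}. The hard part will be the transversality step: one must simultaneously control the orders of poles arising from coincidences of transversal operators --- tracked by $t$ --- against the sewing singularities from Proposition \ref{derga}, and confirm that the two families of convergence inequalities can be satisfied together. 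The coincidence bookkeeping relating $(\theta_l,r)$ to $(\mu_l,t)$, ensuring that removing $t$ coinciding transversal operators lowers the transversality index exactly from $\mu_l$ to $\mu_l-t$ while the $r$ coinciding parameters lower the arity from $\theta_l$ to $\theta_l-r$, is where most care is needed.
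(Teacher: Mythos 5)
Your proposal follows essentially the same route as the paper: membership in $\W^{(1,\ldots,l)}_{z_1,\ldots,z_{\theta_l-r}}$ is taken from the earlier convergence and symmetry results, the substantive work is the reduction of regularized transversality of the product to that of the individual factors, and the conclusion is read off from the holonomy-embedding intersection defining the target space. Your key "factorization by the index $i$" step is exactly what the paper proves as Lemma \ref{obvlem} (via grading shifts by $e^{\zeta_{2,i}L_{W^{(i)}}(-1)}$, insertion of module bases, and the permutation invariance of Proposition \ref{pupa}) and then exploits in Proposition \ref{ccc} by bounding $\left|\mathcal R^{1/2}\left(\Phi^{(1,\ldots,l)}\right)\right|$ by the corresponding quantities for each $\Phi^{(i)}$, so your plan is correct and matches the paper's proof in structure, with that lemma being the place where your heuristic justification must be turned into a computation.
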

\begin{proof}
In Proposition \ref{derga} it was proven that 
$\widehat{\Theta}$ $\left(f_1 \right.$, $\ldots$, $f_l$;   
$g_1, \ldots, g_l$; $v_{1, 1}, x_{1, 1}$;  
 $\ldots$; $v_{k_l, l}, x_{k_l, l}$;    
 $\rho_1, \ldots$, $\rho_l$; $\zeta_{1, i}$, $\left. \zeta_{2, i} \right)_k$    
 $\in \W^{(1, \ldots, l)}_{z_1, \ldots, z_{\theta_l-r}}$.      
Namely, the differential 
forms corresponding to the sequence  
multiple product $\widehat{\Theta}$ $(f_1$, $\ldots$, $f_l$; $g_1$, $\ldots$, $g_l$; 
 $v_{1, 1}$, $x_{1, 1}$;  $\ldots$; $v_{k_l, l}$,  $x_{k_l, l}$;     
 $\rho_1$, $\ldots$, $\rho_l$; $\zeta_{1, i}$, $\zeta_{2, i})_k$
 converge in $\rho_i$ individually, and are subject to    
  \eqref{shushu},  the $L_V(0)$-conjugation \eqref{loconj} and 
the $L_V(-1)$-derivative \eqref{lder1} properties. 
The formula \eqref{sigmaction} gives 
 the action of $\sigma \in S_{k_l-r}$ on the product 
$\widehat{\Theta}$ $\left(f_1, \ldots, f_l; g_1, \ldots, g_l \right. $ ; 
$v_{1, 1}, x_{1, 1};  \ldots; v_{k_l, l}, x_{k_l, l};  
\rho_1, \ldots, \rho_l $ ; $\left. \zeta_{1, i}, \zeta_{2, i}\right)_k$    
 \eqref{bardos}.   
Then we see that for the sets of points 
$(p_{1, i}$, $\dots$, $p_{k_i, i})$,  
taken on the same 
transversal section $U_{j, i} \in \U$, $j\ge 1$, 
by Proposition \ref{derga} we obtain a map 
$\widehat{\Theta}$ $\left(f_1 \right.$, $\ldots$, $f_l$; $g_1$, $\ldots$, $g_l$; 
  $v_{1, 1}$, $x_{1, 1}$; $\ldots$;  $v_{k_l, l}$, $x_{k_l, l}$;  
 $\rho_1$, $\ldots$, $\rho_l$; $\zeta_{1, i}$, $\left. \zeta_{2, i} \right)_k$,    
$:$ $V^{\otimes (\theta_l) } \rightarrow    
 \W^{(1, \ldots, l)}_{c_1(p_1), \dots, c_{k_1+\ldots+k_l-r}(p_{k_1+\ldots+k_l-r})}$,     
 with the non-coinciding formal parameters $(z_1, \ldots, z_{\theta_l-r})$   
identified with the local coordinates  
$(c_1(p_1), \ldots $, $ c_{\theta_i-r_i}(p_{\theta_i-r}))$,   
of the points 
$(p_{1, 1}, \ldots,  p_{k_1, 1},  \ldots, p_{1, l}  \ldots, p_{k_l, l})$.  
%
Let us show that 
\begin{eqnarray*}
 && \sum_{q_1, \ldots, q_l \in \C}
\langle w', E^{(m_1+\ldots+m_l)}_{W^{(1, \ldots, l)}} \Big( 
v_1, z_1; \ldots; 
v_{m_1+\ldots+m_l}, z_{m_1+\ldots+m_l}; \qquad \qquad  \qquad \qquad  \qquad \qquad  
\nn
 && \qquad 
P_{q_1, \ldots, q_l}  \Big( \Phi^{(1, \ldots, l)}(f_1, \ldots, f_l; g_1, \ldots, g_l;  
v_{m_1+\ldots+m_l+1}, z_{m_1+\ldots+m_l+1};  
\ldots; \qquad \qquad  \qquad \qquad \qquad \qquad  \qquad 
\nn
 && 
 \qquad \qquad v_{m_1+\ldots+m_l+k_1+\ldots+k_l}, z_{m_1+\ldots+m_l+k_1+\ldots+k_l}; 
\rho_1, \ldots, \rho_l  
\Big) \Big)
 \rangle \qquad \qquad  \qquad  
\nn
 && =
\sum_{u\in V_{(k)} \atop k \in \Z}    
\widehat{\mathcal R}\prod_{i=1}^l
 \rho^k_i 
\langle w'_i,   
E^{(m_i)}_{W^{(i)}} \Big(
v_{k_i+1, i}, x_{k_i+1, i}; \ldots; 
v_{k_i+m_i, i}, x_{k_i+m_i, i}; \qquad \qquad  \qquad \qquad  \qquad \qquad 
\nn
 && \qquad  P_{q_i} \Big( Y^{W^{(i)}}_{W^{(i)}V'}
\left(   
\Phi^{(i)} (g_i; v_{1, i}, x_{1, i};  
\ldots; v_{k_i, i}, x_{k_i, i}, u, \zeta_{1, i}),
 \zeta_{2, i} \right) f_i.\overline{u} \Big) \Big)\rangle. \qquad \qquad  
\end{eqnarray*}
Indeed,
in the Appendix 
 the definition \eqref{xusnya} 
of 
 $E^{(m_1+\ldots+m_l)}_{W^{(1, \ldots, l)}}$ was given. 
Consider
\begin{eqnarray*}
&&
\sum_{u\in V_{(k)} \atop k \in \Z}     
\mathcal R \prod_{i=1}^l 
 \rho^k_i  
\langle w'_i,    
E^{(m_1+\ldots+m_l)}_{ W^{(1, \ldots, l)} } \Big(
v_1, z_1; \ldots;  
v_{m_1+\ldots+m_l}, z_{m_1+\ldots+m_l}; 
\nn
&& \qquad P_{q_1, \ldots, q_l} \Big(   Y^{ W^{(i)} }_{ W^{(i)}V' }
\left( \Phi^{(i)} (g_i; v_{m_1+\ldots+m_i +1}, z_{m_1+\ldots+m_i +1};  \right. 
\nn
&&
\qquad \qquad  \left. 
\ldots; v_{m_1+\ldots+m_i +k_i}, z_{m_1+\ldots+m_i +k_i}, u, \zeta_{1, i}), 
 \zeta_{2, i} \right) f_i.\overline{u} \Big) \Big)\rangle. 
\end{eqnarray*}
\begin{eqnarray*}
=\sum_{u\in V_{(k)} \atop k \in \Z}    
\mathcal R \prod_{i=1}^l
 \rho^k_i 
\langle w'_i,   
E^{(m_1+\ldots+m_l)}_{W^{(1, \ldots, l)}} \Big(v_1, z_1; \ldots;  
v_{m_1+\ldots+m_l}, z_{m_1+\ldots+m_l};
 \qquad \qquad  \qquad \qquad  \qquad \qquad &&
\nn
 P_{q_1, \ldots, q_l} \Big(
e^{\zeta_{2, i} L_{W^{(i)}}(-1) }\; Y_{W^{(i)}} \left( f_i.\overline{u}, -\zeta_{2, i} \right) 
\qquad \qquad  \qquad \qquad  \qquad \qquad \qquad \qquad \qquad  \qquad
&& 
\nn
\Phi^{(i)} (
g_i; v_{m_1+\ldots+m_i +1}, z_{m_1+\ldots+m_i +1};  
\ldots; v_{m_1+\ldots+m_i +k_i}, z_{m_1+\ldots+m_i +k_i}; u, \zeta_{1, i}),
 \Big) \Big)\rangle. \qquad \qquad \qquad && 
\end{eqnarray*}
The action of 
a grading-restricted generalized $V$-module $W^{(i)}$ 
vertex operators $Y_{W^{(i)}} \left( f_i.\overline{u}, -\zeta_{a, i}\right)$,
and the exponentials $e^{\zeta_{a, i} L_{W^{(i)}}(-1)}$, $a=1$, $2$,
 of the differential operator $L_{W^{(i)}}(-1)$,     
 shifts the grading index $q$ of the $W^{(i)}_{q_i}$-subspaces  
by $\alpha_i \in \C$ which can be later rescaled 
to $q_i$.   
 Thus, the last expression transforms to 
\begin{eqnarray*} 
&& \sum_{q\in \C}  
\sum_{u\in V_{(k)} \atop k \in \Z}    
\mathcal R \prod_{i=1}^l
 \rho^k_i 
\langle w'_i,   
E^{(m_1+\ldots+m_l)}_{W^{(1, \ldots, l)}} \Big(v_1, z_1; \ldots;   
v_{m_1+\ldots+m_l}, z_{m_1+\ldots+m_l};
\nn
  && \qquad e^{\zeta_{2, i} L_{W^{(i)}}(-1)}\; 
 Y_{W^{(i)}} \left( f_i.\overline{u}, -\zeta_{2, i} \right)   
\nn
 && \qquad \qquad  P_{q_1+\alpha_1, \ldots, q_l+\alpha_l}  
\left(  
\Phi^{(i)} (
g_i; v_{m_1+\ldots+m_i +1}, z_{m_1+\ldots+m_i +1};  \right. 
\nn
&& \qquad \qquad \qquad \left. \ldots; v_{m_1+\ldots+m_i +k_i}, z_{m_1+\ldots+m_i +k_i}; 
u, \zeta_{1, i}),  \right)  \rangle 
\end{eqnarray*}
\begin{eqnarray*} 
\nonumber 
&& =\sum_{q\in \C}  
\sum_{u\in V_{(k)} \atop k \in \Z}    
\mathcal R \prod_{i=1}^l
 \rho^k_i 
\langle w'_i,   
E^{(m_1+\ldots+m_l)}_{W^{(1, \ldots, l)}} \Big(v_1, z_1; \ldots;  
v_{m_1+\ldots+m_l}, z_{m_1+\ldots+m_l};
\nn
\nonumber 
  &&\qquad \qquad Y^{ W^{(i)} }_{ W^{(i)}V' } \left(
P_{q_1+\alpha_1, \ldots, q_l+\alpha_l} 
\left(  
\Phi^{(i)} (
g_i; v_{m_1+\ldots+m_i +1}, z_{m_1+\ldots+m_i +1}; \right. \right. 
\nn 
\nonumber 
 && \qquad \qquad \qquad 
\left. \left. 
 \ldots; v_{m_1+\ldots+m_i +k_i}, z_{m_1+\ldots+m_i +k_i};
 u, \zeta_{1, i}) \right), \zeta_{2, i} \right)  
f_i.\overline{u}  \rangle 
\end{eqnarray*}
\begin{eqnarray*} 
&&
\nonumber 
 = 
\sum_{q\in \C}  
\sum_{u\in V_{(k)} \atop k \in \Z}    
\mathcal R \prod_{i=1}^l \sum_{\widetilde{w}_i \in W^{(i)}}  
 \rho^k_i 
 \langle w'_i, E^{(m_1+\ldots+m_l)}_{W^{(1, \ldots, l)}} \Big(v_1, z_1; \ldots;  
v_{m_1+\ldots+m_l}, z_{m_1+\ldots+m_l};
  \widetilde{w}_i \Big) \rangle  
\nn
\nonumber
&&
\qquad   
\langle w'_i,  Y^{W^{(i)}}_{W^{(i)}V'}  
\left( 
\;P_{q+\alpha}\Big(   \Phi (
g_i; v_{m_1+\ldots+m_i +1}, z_{m_1+\ldots+m_i +1};  \right. 
\nn
\nonumber
&&
\qquad \qquad \left. \ldots; v_{m_1+\ldots+m_i +k_i}, z_{m_1+\ldots+m_i +k_i}; 
u, \zeta_{1, i}) \right), \zeta_{2, i} \Big)  \Big) 
f_i.\overline{u}  \rangle
\end{eqnarray*}
\begin{eqnarray*}
&&
= \sum_{q\in \C}    
 \langle w', E^{(m_1+\ldots+m_l)}_{W^{(1, \ldots, l)}} \Big(
v_1, z_1; \ldots;  
v_{m_1+\ldots+m_l}, z_{m_1+\ldots+m_l};   
\nn
&&
\qquad   
 P_{q+\alpha}\Big(   \Phi^{(1, \ldots, l)} (f_1, \ldots, f_l; g_1, \ldots, g_l;
 v_{m_1+\ldots+m_i+1}, z_{m_1+\ldots+m_i+1};  
\nn 
&&
\qquad \ldots; v_{m_1+\ldots+m_i+ k_1+\ldots+k_l}, z_{m_1+\ldots+m_i+k_1+\ldots+k_l}   
\Big)   
\rangle. 
\end{eqnarray*}
According to Proposition \ref{pupa}, 
as an element of $\W^{(k_1, \ldots, k_l)}_{z_1, \ldots,  z_{m_1+\ldots+m_l+k_1+\ldots+k_l}}$
\begin{eqnarray}
\label{svoloch}
&& \langle w', E^{(m_1+\ldots+m_l)}_{W^{(1, \ldots, l)}} \Big(
v_1, z_1; \ldots;  
v_{m_1+\ldots+m_l}, z_{m_1+\ldots+m_l};    
\nn
&&
\qquad   
 P_{q+\alpha}\Big(   \Phi^{(1, \ldots, l)} (f_1, \ldots, f_l; g_1, \ldots, g_l;
v_{m_1+\ldots+m_i+1}, z_{m_1+\ldots+m_i+1};  
\nn
&&
\qquad \qquad \qquad 
\ldots; v_{m_1+\ldots+m_i+ k_1+\ldots+k_l}, z_{m_1+\ldots+m_i+k_1+\ldots+k_l}) \Big)     
\rangle, 
\end{eqnarray}
is invariant under the action of $\sigma \in S_{m_1+\ldots+m_i+k_1+\ldots+k_l}$.  
 Thus, it possible to use this invariance to show that \eqref{svoloch} reduces to  
\begin{eqnarray*}
&& \langle w', E^{(m_1+\ldots+m_l)}_{W^{(1, \ldots, l)}} \Big(
v_{k_1+1}, z_{k_1+1}; \ldots; v_{k_1+1+m_1}, z_{k_1+1+m_1}; 
\nn
&&
 \qquad  \ldots; 
v_{k_l+1}, z_{k_l+1}; \ldots; v_{k_l+1+m_l}, z_{k_l+1+m_l};    
\nn
&&
\qquad   
 P_{q+\alpha}\Big(   \Phi^{(1, \ldots, l)} (f_1, \ldots, f_l; g_1, \ldots, g_l;
 v_1, z_1;  \ldots; v_{k_1}, z_{k_1};  \ldots; 
\nn
&&
\qquad \qquad \qquad \qquad \qquad \qquad 
v_{k_1+\ldots+k_l}, z_{k_1+\ldots+k_l}) \Big) \Big)  
\rangle  
\nn
&&
=\langle w', E^{(m_1+\ldots+m_l)}_{W^{(1, \ldots, l)}} \Big(
v_{k_1+1, i}, x_{k_1+1, i};  \ldots; v_{k_l+1+m_l}, x_{k_l+1+m_l};    
\nn
&&
\qquad   
 P_{q+\alpha}\Big(   \Phi^{(1, \ldots, l)} (f_1, \ldots, f_l; 
g_1, \ldots, g_l; v_{1, i}, x_{1, i};  \ldots; v_{k_i, i}, x_{k_i, i}) \Big)    
\rangle.  
\end{eqnarray*}
Similarly, for $1 \le i \le l$ 
\begin{eqnarray*}
&& 
\langle w',  E^{(m_1+\ldots+m_l)}_{W^{(1, \ldots, l)}} \Big(
v_1, z_1; \ldots;
v_{m_1+\ldots+m_l}, z_{m_1+\ldots+m_l};  
\nn
&& 
\qquad  P_q \Big(  
  Y^{W^{(i)}}_{W^{(i)}V'} \left(  
\Phi^{(i)} (v_{m_1+\ldots+m_l+1}, z_{m_1+\ldots+m_l+1};  \right. 
\nn
&&  
\qquad \left. \left. 
\ldots; v_{m_1+\ldots+m_l+k_1+\ldots+k_i}, z_{m_1+\ldots+m_l+k_1+\ldots+k_i}); 
u, \zeta_{1, i}) \right), \zeta_{2, i} \right)  
f_i.\overline{u}  \Big) \Big)\rangle,  
\end{eqnarray*} 
correspond to the elements 
of $\W_{z_1, \ldots, z_{m_1+\ldots+m_l+k_1+\ldots+k_i}}$.  
Let us use Proposition \ref{pupa} again and we arrive at 
\begin{eqnarray*}
\nonumber
&& 
\langle w',  E^{(m_1+\ldots+m_l)}_{W^{(1, \ldots, l)}} \Big(
v_{k_i+1, i}, x_{k_i+1, i}; \ldots;  
v_{k_i+m_i}, x_{k_i+m_i};   
\nn
&&
\qquad 
 P_q \Big(  
  Y^{W^{(i)}}_{W^{(i)}V'} \left(  
\Phi^{(i)} (v_{1, i}, x_{1, i};  \ldots; v_{k_i, i}, x_{k_i, i}); u, \zeta_{1, i}) \right), 
f_i.\overline{u}  \Big) \Big) \rangle.  
\end{eqnarray*}  

Next, we prove 
\begin{proposition}
\label{ccc}
The products  
$\Theta
\left( f_1, \ldots, f_l; g_1, \ldots, g_l \right.$;  
 $v_1, z_1; \ldots; v_{\theta_l-r},  z_{\theta_l-r}$;   
$\rho_1$, $\ldots$, $\rho_l$; $\zeta_{1, i}$, $\left. \zeta_{2, i} \right)$
 \eqref{bardos}  
are adapted transversal to $\mu_l-t$ vertex operators.  
\end{proposition}
\begin{proof}
Recall that 
$\Phi^{(i)}(g_i; v_{n_i+1}, z_{n_i+1}; \ldots; v_{n_i+k_i},  z_{n_i+k_i})$, $1\le i \le l$, 
 are adapted transversal to $m_i-t_i$ vertex operators.    
 For the first condition of the adapted transversality: 
 let $l_{1, i}, \ldots, l_{k_i-r_i, i} \in \Z_+$  
such that $l_{1, i}+\ldots +l_{k_i, i}= n_i+k_i-r_i+m_i-t_i$. 
For an arbitrary $w'_i\in W^{(i)}{}'$, denote
\begin{eqnarray}
&& (v_{n_i+1}, \ldots, v_{n_i+k_i}, v_{n_i+k_i+1}, \ldots, v_{n_i+k_i+m_i-t_i})
\nn
&& 
 \qquad =(v_{n_i+1}, \ldots, v_{n_i+k_i}, v'_{n_i+k_i+1}, \ldots, v'_{n_i+k_i+m_i-t_i}), 
\nn
&& (z_{n_i+1}, \ldots, z_{n_i+k_i}, z_{n_i+k_i+1}, \ldots, z_{n_i+k_i+m_i-t_i})
\nn
&& 
\qquad =(z_{n_i+1}, \ldots, z_{n_i+k_i}, z'_{n_i+k_i+1}, \ldots, z'_{n_i+k_i+m_i-t_i}).  
\end{eqnarray} 
 Define  
$\Xi_{j, i}
=
E^{(l_{j,i})}_V(v_{\varkappa_{1, i}}, z_{\varkappa_{1, i}}- \varsigma_{j, i};    
 \ldots; 
v_{\varkappa_{j, i}}, z_{\varkappa_{j,i}}- \varsigma_{j,i}   
 ; \one_V)$,      
where
\begin{eqnarray}
\label{varki}
 \varkappa_{1, i}=l_{1, i}+\ldots +l_{j-1, i}+1, 
\quad  \ldots, \quad  \varkappa_{j, i}=l_{1, i}+\ldots +l_{j-1, i}+l_j,   
\end{eqnarray} 
for $1 \le j \le k_i-r_i$.  
Then the series 
\begin{eqnarray}
\label{Inms}
&& \mathcal R^{1, k_i-r_i}_{m_i-t_i}\left(\Phi^{(i)}\right)= 
R \sum_{r_{1, i}, \ldots, r_{k_i-r_i, i} \in \Z}\langle w'_i,   
\Phi^{(i)} \left( g_i; P_{r_{1, i}}\Xi_{1, i}; \varsigma_{1, i};  \right. 
 \ldots; 
\nn
&&
\qquad \qquad \qquad \qquad \qquad \qquad
\left. P_{r_{k_i-r_i, i}} \Xi_{k_i-r_i, i}, \varsigma_{k_i-r_i, i}\rangle \right)    
\rangle,
\end{eqnarray} 
is absolutely convergent  when 
$|z_{l_{1, i}+\ldots +l_{j-1, i}+p_i}-\varsigma_{j, i}| 
+ |z_{l_{1, i}+\ldots +l_{j'-1, i}+q}-\varsigma_{j', i}|< |\varsigma_{j, i} -\varsigma_{j', i}|$,   
for $j$, $1 \le j' \le k_i-r_i$, $j\ne j'$, 
 and for $1 \le p_i \le l_{j, i}$ and $1 \le q_i\le l_{j', i}$.  
There exist positive integers $N^{k_i-r_i}_{m_i-t_i}(v_{j, i}, v_{j', i})$,   
depending only on $v_{j, i}$ and $v_{j', i}$ 
for $1 \le j$, $j' \le m_i-t_i$, $j\ne j'$, such that  
the sum is analytically extended to a
rational function
in $(z_1, \ldots, z_{n_i+k_i-r_i+m_i-t_i})$,     
 independent of $(\varsigma_{1, i}, \ldots, \varsigma_{k_i-r_i, i})$,     
with the only possible poles at  
$x_{j, i}=x_{j', i}$, of order less than or equal to   
$N^{k_i-r_i}_{m_i-t_i}(v_{j, i}, v_{j', i})$, 
for $j$, $1 \le j' \le k_i-r_i$,  $j \ne j'$.   

Now let us consider the first condition of the definition   
of the adapted transversal for the product 
\eqref{bardos} of 
$\Phi^{(i)}(g_i; v_1, z_1; \ldots; v_{\theta_i},  z_{\theta_i})$    
  with a number of vertex operators.   
We obtain for  
$\Theta\left(f_1, \ldots, f_l; g_1, \ldots, g_l; v_1, z_1; \ldots; 
 v_{\theta_l}, z_{\theta_l}; \rho_1, \ldots, \rho_l\right)$ the following.   
Introduce $l'_1, \ldots, l'_{\theta_l-r} \in \Z_+$,      
 such that $l'_1+\ldots +l'_{\theta_l-r}= \theta_l -r+\mu_l-t$.   
Define   
$\Xi'_{j''} 
=
E^{(l''_{j''})}_V (v_{\varkappa'_1}, z_{\varkappa'_1}- \varsigma'_{j''};  
 \ldots; 
v_{\varkappa'_{j''}}, z_{\varkappa'_{j''}}- \varsigma'_{j''}   
 ; \one_V)$,     
 $\varkappa'_1=l'_1+\ldots +l'_{j''-1}+1$, 
$\ldots$, 
$\varkappa'_{j''}=l'_1+\ldots +l'_{j''-1}+l'_{j''}$,     
for $1 \le j'' \le \theta_l-r$,    
and we take 
$(\zeta'_1, \ldots, \zeta'_{\theta_{k_l}-r} ) 
= (\zeta_1, \ldots, \zeta_{k_1-r_1}; \ldots; 
 \zeta_{n_{l-1}+1}, \ldots, \zeta_{n_l+k_l-r_l})$.   
%
Then we consider 
\begin{eqnarray}
\label{Inmdvadva} 
&& \mathcal R^{1, \theta_l-r}_{\mu_l-r}\left(\Phi^{(1, \ldots, l)}\right)    
= R \sum_{r'_1, \ldots, r'_{\theta_l-r}\in \Z}     
\widehat{\Theta} \left(f_1, \ldots, f_l; g_1, \ldots, g_l; P_{r'_1}\Xi'_1,
 \ldots; \right. 
\nn
&&
\qquad \qquad \qquad \qquad \qquad \qquad
\left. 
P_{r'_{\theta_l-r}} \Xi'_{\theta_l-r}, \varsigma'_{\theta_l-r}\right), 
\end{eqnarray} 
and prove it's absolutely convergence with some conditions.
%
 The condition   
%
$|z_{l'_1+\ldots +l'_{j-1}+p'}$-$\varsigma'_j|$    
$+$ $|z_{l'_1+\ldots +l'_{j'-1}+q'}$- $\varsigma'_{j'}|$ $<$ 
$|\varsigma'_j -\varsigma'_{j'}|$,    
of the absolute convergence for \eqref{Inmdvadva}
 for $1 \le i''$, $j''\le \theta_l-r$, $j\ne j'$,  
 for $1 \le p' \le l'_j$,  
and $1 \le q' \le l'_{j'}$, 
follows from the conditions \eqref{granizy1} and \eqref{granizy2}.  
%
 The action of $e^{\zeta L_{W^{(i)}}(-1) } \;  Y_{W^{(i)}}(.,.)$, $a=1$, $2$, in  
\begin{eqnarray*}
 \langle w_i',  e^{\zeta_1 L_{W^{(i)}} (-1) } \; 
Y_{W^{(i)}}(u, -\zeta)\sum_{ {r_1, \ldots,} \atop {r_{k_i-r_i}\in \Z} }  
\Phi^{(i)}
\left(g_i; P_{r_{1, i}}\Xi_1, \varsigma_1;  \right.  
P_{r_{k_i-r_ik, i}} \Xi_{k_i-r_i}, \varsigma_{k_i-r_i}\rangle)   \rangle,   &&
\end{eqnarray*} 
 does not affect the absolute convergence of \eqref{Inms}.   
 Therefore,  
\begin{eqnarray*}
 && \left|
\mathcal R^{1, \theta_l-r}_{\mu_l-t} 
\left(
\Phi^{(1, \ldots, l)}   \right)   \right|  
\nn
&&  =R \left|
\sum_{r''_1, \ldots, r''_{\theta_l-r}\in \Z}   
\widehat{\Theta} 
\left(
f_1, \ldots, f_l; g_1, \ldots, g_l; P_{r'_1} \Xi'_1, \varsigma'_1;  
 \ldots; 
P_{r'_{\theta_l-r}} \Xi'_{\theta_l-r}, \varsigma'_{\theta_l-r} \right) \right| 
\end{eqnarray*}
\begin{eqnarray*}
\nonumber
&& =\left|
\sum_{u\in V_{(k)} \atop k \in \Z}    
\widehat{\mathcal R}\prod_{i=1}^l
 \rho^k_i 
\langle w'_i,  Y^{W^{(i)}}_{W^{(i)}V'}   
\Big( \sum_{ {r'_1, \ldots, } \atop {r'_{\theta_i-r_i} \in \Z} }       
\Phi^{(i)} \left(
 g_i; P_{r'_1}\Xi'_1, \varsigma'_1;   
\ldots;  \right. \right. 
\nn
&&   
\nonumber
\qquad \qquad \qquad \qquad \left. \left. 
P_{r'_{\theta_i-r_i} } \Xi'_{\theta_i-r_i}, \varsigma'_{\theta_i-r_i}; u, 
\varsigma_{1,i} ) \right),    
 \varsigma_{2, i}\Big)\;  f_i.\overline{u}  \rangle  \right| 
\end{eqnarray*}
\begin{eqnarray*}
&& =\left| 
\sum_{u\in V_{(k)} \atop k \in \Z}    
\widehat{\mathcal R} \prod_{i=1}^l
 \rho_i^k
\langle w'_i,  Y^{W^{(i)}}_{W^{(i)}V'} 
\Big( \sum_{{r_{1, i}, \ldots, } \atop {r_{k_i-r_i, i}
\in \Z}}     
\Phi^{(i)}
\left(
g_i; P_{r_{1, i}}\Xi_{1, i}, \varsigma_{1, i};  
\ldots; 
  \right. \right.  
\nn
&& \left. \left. \left. 
\qquad \qquad \qquad  
P_{r_{k_i-r_i, i}} \Xi_{k_i-r_i, i}, \varsigma_{k_i-r_i, i}; u, \varsigma_{1,i}
\right), 
 \varsigma_{2, i}\right)\;  f_i.\overline{u}  \rangle  \right| 
\end{eqnarray*}
\begin{eqnarray*}
 =\left| 
 \sum_{u\in V_{(k)} \atop k \in \Z}    
\widehat{\mathcal R} \prod_{i=1}^l
 \rho^k_i 
\langle w'_i,  e^{\varsigma_{2, i}L_{W^{(i)}}(-1)} 
Y_{W^{(i)}} ( f_i.\overline{u} ,-\varsigma_{2, i}) \right. 
\qquad \qquad \qquad \qquad \qquad  
&&
\nn
\left. \sum_{{r_{1, i}, \ldots, } \atop {r_{k_i-r_i, i} \in \Z}}     
\Phi^{(i)}\left(g_i; P_{r_{1, i}}\Xi_{1, i}, \varsigma_{1, i};  
\ldots; 
P_{r_{k, i}} \Xi_{k_i-r_i, i}, \varsigma_{k_i-r_i, i}; u, \varsigma_{1,i}\right)\; 
  \rangle  \right| 
  \le \left|\mathcal R^{1, k_i-r_i}_{m_i-t_i}\left(\Phi^{(i)}\right)\right|.   
\end{eqnarray*} 
 We conclude that \eqref{Inmdvadva} 
is absolutely convergent. 
Recall that $N^{k_i-r_i}_{m_i-t_i}(v_{i, i}, v_{j, i})$ are 
the maximal orders of possible poles of 
\eqref{Inmdvadva} at $x_{j, i}=x_{j', i}$. 
From the last expression follows that 
there exist positive integers $N^{\theta_l-r}_{\mu_l-t}(v_{i'', i}, v_{j'', i})$    
 for $1 \le j$, $j' \le k_i-r_i$, $j\ne j'$,      
depending only on $v_{i'', i}$ and $v_{j'', i}$  
for $1 \le i''$, $j'' \le  \theta_{k_l}-r$, $i'' \ne j''$,  
such that 
 the series \eqref{Inmdvadva} 
can be analytically extended to a
rational function
in $(z_1, \ldots, z_{\theta_l-r})$,      
 independent of $(\varsigma'_{1, i}, \ldots, \varsigma'_{\theta_l-r, i})$,     
with extra 
 possible poles at  
 and $z_{j, i}=z'_{j', i'}$, 
of order less than or equal to 
$N^{\theta_l-r}_{\mu_l-t}  
(v_{i'', i}, v_{j'', i})$, for $1 \le i''$, $j'' \le n$,  $i''\ne j''$.   

Now, let us pass to the second condition of the adapted transversal
for $\Phi^{(i)}$  
$(g_i$;  $v_{n_i+1}, z_{n_i+1}; \ldots; v_{n_i+k_i-r_i}, x_{n_i+k_i-r_i}) 
  \in  C^{k_i}_{m_i}\left(V, \W^{(i)} \right)$, and  
$v_{1, i}, \ldots, v_{k_i, i} \in V$, 
$(x_{1, i}, \ldots, x_{k_i+m, i})\in \C$. 
For arbitrary  $w'_i\in W^{(i)}{}'$, the series  
\begin{eqnarray}
\label{Jnm2}
 \mathcal R^{2, k_i-r_i}_{m_i-t_i}\left(\Phi^{(i)}\right)=   
R \sum_{q_i\in \C}\langle w'_i,  
E^{(m_i-t_i)}_{W^{(i)}} \Big(v'{n'_i+1}, z'_{n'_i+1}; 
 \ldots; v'_{n'_i+m_i-t_i}, z'_{n'_i+m_i-t_i};  &&
\nn
 P_{q_i} \left( \Phi^{(i)}(g_i; v_{n'_i+m_i-t_i+1}, z_{n_i+m_i-t_i+1};  
\ldots; v_{n'_i+m_i-t_i+k_i}, z_{n_i+m_i-t_i+k_i}) \right) \Big) \rangle, &&   
\end{eqnarray}
is absolutely convergent when 
$z'_j\ne z'_{j'}$, 
$j\ne j'$, 
$|z'_{j}|>|z'_{j'}|>0$,    
 for $1 \le j \le m_i-t_i$, $m_i+1 \le j' \le k_i+m_i$, 
 and the sum can be analytically extended to a
rational function 
in $(x_{1, i}, \ldots,  x_{k_i+m_i, i})$
 with the only possible poles at 
$x_{j, i}=x_{j', i}$, of orders less than or equal to 
$N^{k_i}_{m_i}(v_{i, i}, v_{j, i})$, 
for $1 \le j$, $j' \le k_i$, $j \ne j'$.    

In the Appendix 
 the definition \eqref{xusnya} of the element 
 $E^{(\mu_l)}_{W^{(1, \ldots, l)}}$ 
for $\Phi^{(1, \ldots, l)} \in 
 \W^{(1, \ldots, l)}_{z'_1, \ldots, z'_{\theta_l-r}}$   
was given. 
With the conditions
$z_{i'', i}\ne z_{j'', i}, \quad i''\ne j''$, 
 $1 \le i \le l$,  
$|z_{i'', i}|>|z_{k''', i}|>0$,   
 for $i''=1, \dots, m_1+\ldots+m_l$, 
and $k'''=m_1+\ldots+m_l+1, \dots, m_1+\ldots+m_l+ k_1+\ldots+k_l$,  
let us define 
\begin{eqnarray}
\label{perda}
 \mathcal R^{2, k_1+\ldots+k_l-r}_{m_1+\ldots+m_l-t}\left(\Phi^{(1, \ldots, l)} \right)  
= R\sum_{{q_1, \ldots, q_l}\in \C} 
 E^{(m_1+\ldots+m_l)}_{W^{(1, \ldots, l)}} \Big(  
v_1, z_1; \ldots; 
 && 
\nn
 \qquad v_{m_1+\ldots+m_l}, z_{m_1+\ldots+m_l};
P_{q_1, \ldots, q_l}\Big( \Phi^{(1, \ldots, l)}(g_1, \ldots, g_l; && 
\nn
 \qquad v_{m_1+\ldots+m_l+1}, z_{m_1+\ldots+m_l+1}; \ldots; &&
\nn
\qquad \qquad \qquad 
  v_{m_1+\ldots+m_l+k_1+\ldots+k_l}, z_{m_1+\ldots+m_l+k_1+\ldots+k_l};   
 \rho_1, \ldots, \rho_l)\Big), \quad &&
\end{eqnarray}
where $P_{q_1, \ldots, q_l}$ stands for projections  
$P_{q_i}: \overline{W}^{(i)} \to \W^{(i)}_{q_i}$
 on the corresponding subspaces in the tensor product $\W^{(1, \ldots, l)}$. 
In the Appendix \eqref{xusnya} defines 
 $E^{(m_1+\ldots+m_l)}_{W^{(1, \ldots, l)}}$. 
In order to get, in particular,
the adapted transversal of an element $\Phi$  
with extra vertex operators, 
$\mathcal R^{2, n}_m(\Phi)$ 
\eqref{Inm}
was introduced in Subsection \ref{composable}. 
We substitute the element $\Phi$
 by an element $\Phi^{(1, \ldots, l)}$ in $\Theta$. 
The absolute convergence of 
$\mathcal R^{2, k_1+\ldots+k_l-r}_{m_1+\ldots+m_l-t}\left(\Phi^{(1, \ldots, l)} \right)$
 defined by \eqref{perda} with \eqref{xusnya} 
provides the adapted transversal condition for $\Phi^{(1, \ldots, l)}$ with 
respect to a number of extra vertex operators in $\W^{(1, \ldots, l)}$. 
Using formulas provem above 
we have 
\begin{eqnarray*}
\left| 
\mathcal R^{2, k_1+\ldots+k_l-r}_{m_1+\ldots+m_l-t}
\left(  \Phi^{(1, \ldots, l)}  \right) 
\right|  
=\left|
\sum_{ {q_1, \ldots, q_l}\in \C} \mathcal R \prod_{i=1}^l 
 \langle w'_i,  E^{(m_i)}_{W^{(i)}} \Big(   
v_1, z_1; \ldots; 
v_{m_i}, z_{m_i}; \right. && 
\nn
\left.
    P_{q_1, \ldots, q_l}
\Big( \Phi^{(i)}(g_i;   
 v_{m_i+1}, z_{m_i+1}; \ldots;  
 v_{m_i+k_i}, z_{m_i+k_i})\Big) \Big) \rangle \right| 
  &&
\end{eqnarray*}
\begin{eqnarray*}
= \left| \sum_{q_1, \ldots, q_l \in \C}    
 \widehat{\mathcal R} \prod_{i=1}^l
\langle w'_i, 
  E^{(m_i)}_{W^{(i)}} \Big(  
v_{1, i}, x_{1, i}; \ldots;   
v_{m_i, i}, x_{m_i, i}; \right. \qquad \qquad \qquad   &&
\nn
  \left.  
P_{q_i} \Big(   
Y^{W^{(i)}}_{W^{(i)}V'} \left( 
\Phi^{(i)}(g_i; v_{m_i+1, i}, x_{m_i+1, i}; \ldots; 
v_{m_i+k_i, i}, x_{m_i+k_i, i}; u, \zeta_{1, i}), \zeta_{2, i} \right)
f_i.\overline{u} 
\; \Big) \Big)\rangle  
 \right| &&
\end{eqnarray*}
\begin{eqnarray*}
&& = \left| 
\sum_{q_1, \ldots, q_l \in \C}    
 \widehat{\mathcal R} \prod_{i=1}^l
\langle w'_i, 
  E^{(m_i)}_{W^{(i)}} \Big(  
v_{1, i}, x_{1, i}; \ldots;   
v_{m_i, i}, x_{m_i, i}; \right. 
\nn 
&& \qquad P_{q_i} \Big(   
e^{\zeta_{2, i}L_{W^{(i)}}(-1)}  
Y_{W^{(i)}}
 \left(f_i.\overline{u}, -\zeta_{2, i} \right) 
 \nn
 && \qquad \left.  
 \Phi^{(i)}(g_i; v_{m_i+1, i}, x_{m_i+1, i}; \ldots; 
v_{m_i+k_i, i}, x_{m_i+k_i, i}; u, \zeta_{1, i})
\; \Big) \Big)\rangle  
 \right| 
\le \left|
\mathcal R^{2, k_i}_{m_i}\left(\Phi^{(i)}\right) \right|,   
\end{eqnarray*}
where the invariance of \eqref{bardos} under 
$\sigma \in S_{m_1+\ldots+m_l-t+k_1+\ldots+k_l-r}$ was used. 
According to the definition,  
$\mathcal R^{2, k_i}_{m_i}\left(\Phi^{(i)}\right)$   
are absolute convergent. 
%
Thus, we infer that  
$\mathcal R^{2, k_1+\ldots+k_l-r}_{m_1+\ldots+m_l-t}\left(\Phi^{(1, \ldots, l}\right)$   
is absolutely convergent, and   
the sum \eqref{Inmdvadva} 
 is analytically extendable to a rational function  
in $(z_1$, $\dots$, $z_{k_1+\ldots+k_l-r+m_1+\ldots+m_l-t})$ 
with the only possible poles at 
$x_{j, i}=x_{j', i}$, and    
at $x_{j, i}=x_{j', i'}$, 
i.e., the only possible poles at 
$z_{i''}=z_{j''}$, of orders less than or equal to  
$N^{k_1+\ldots+k_l}_{m_1+\ldots+m_l}(v_{i'', i}, v_{j'', i})$,  
for $i''$, $j''=1, \ldots, k'''$, $i''\ne j''$.   
This finishes the proof of Proposition \ref{ccc}. 
\end{proof}
Since we have proved that the sequence of 
 products $\widehat{\Theta}$ $\left( f_1 \right.$, 
$\ldots$, $f_l$; $g_1$, $\ldots$., $g_l$;  
 $v_{1, 1}$, $x_{1, 1}$;     
$\ldots$; $v_{k_l, l}$,  $x_{k_l, l}$;   
 $\rho_1$, $\ldots$, $\rho_l$;  $\zeta_{1, i}$, $\zeta_{2, i}$  $5\left. \right)$ is  
adapted transversal to $\mu_l-t$ vertex operators  
\eqref{poper} with the formal parameters identified 
with the local coordinates $c_{j, i}(p''_{j, i})$ 
 around the points $(p'_1, \ldots, p'_{\mu_l-t})$  
on each of the transversal sections
$U_{j, i}$, $1 \le j \le \mu_l-t$, we conclude that  
according to the definition, 
the sequence of products $\widehat{\Theta}$ 
$\left(f_1, \ldots, f_l; g_1, \ldots, g_l \right.$;  
 $v_{1, 1}, x_{1, 1}; \ldots; v_{k_l, l}, x_{k_l, l}$;  
 $\rho_1, \ldots, \rho_l$; $\left. \zeta_{1, i}, \zeta_{2, i} \right)$ 
 belongs to the space    
\begin{eqnarray}
\label{ourbicomplex1110000}
 C^{\theta_l-r}_{\mu_l-t}\left(V, \W^{(1, \ldots, l)} \right)  
  =  \bigcap_{ 
U_{1, i} \stackrel{h_{1, i}} {\hookrightarrow}    
\ldots \stackrel {h_{m_1+\ldots +m_l-1, i}}{\hookrightarrow } U_{m_1+\ldots+m_l}    
\atop 1 \le j \le m_1+\ldots+m_l-t}   
 C^{\theta_l-r}_{(\mu-t)}\left(V, \W^{(1, \ldots, l)} \right) (U_{j, i}), &&       
\nn &&
\end{eqnarray}
where the intersection ranges over all possible $\mu_l-t$-tuples 
of holonomy embeddings $h_{j, i}$, $1 \le j \le  \mu_l-t-1$,     
between transversal sections $U_{1, i}, \ldots, U_{\mu_l-t-1, i}$ 
of the basis $\U$  for $\F$.    
This completes the proof of Proposition \ref{tolsto}. 
\end{proof}
Since the sequence of products 
\eqref{gendef} of $\W^{(i)}$-spaces, $1\le i \le l$, 
gives the tensor products of that spaces, 
the sequence of products \eqref{toporno} of 
the corresponding $C^{k_i}_{m_i}\left(V, \W^{(i)} \right)$-spaces 
belong to the same type of spaces.  
\section{Properties of multiple products sequences}   
\label{pyhva}
Since the sequence of $(\rho_1, \ldots, \rho_l)$-products of elements  
$\Phi^{(i)}$ $(g_i$; $ v_{1, i}, x_{1, i}$; $\ldots$; $v_{k_i,i}, x_{k_i, i})$  
 $\in$  $C^{k_i}_{m_i}\left(V, \W^{(i)} \right)$   
 results in an element of $C^{\theta_l-r}_{\mu_l-t}  
\left(V, \W^{(1, \ldots, l)}, \F \right)$, then the 
corollary below follows
 directly from Proposition \eqref{tolsto}: 

\subsection{Formal parameters invariance}
\label{dondo}
According to Proposition \ref{pupa}, elements of the space 
 \\$\W^{(1, \ldots, l)}_{z_1, \ldots, z_{\theta_l-r} }$     
resulting from the sequence of $(\rho_1, \ldots, \rho_l)$-products 
\eqref{gendef}, \eqref{gendefgen}  are   
invariant with respect to group  
$\left({\rm Aut} \;  \Oo\right)^{\times (\theta_l-r)}_{z_1, \ldots, z_{\theta_l-r} }$ 
of independent changes of the formal parameters. 
It is easy to derive 
\begin{corollary}
For $\Phi^{(i)}(g_i; v_{1,1}, x_{1,1};  \ldots; v_{k_i, i}, x_{k_i, i}) 
 \in C_{m_i}^{k_i}\left(V, \W^{(i)} \right)$    
the sequence 
\begin{eqnarray}
\label{posta}
&& \widehat{\Theta} \left(f_1, \ldots, f_l; g_1, \ldots, g_l;  
 v_{1,1}, x_{1,1};  \ldots; v_{k_l,l},  x_{k_l,l};  
\rho_1, \ldots, \rho_l; \zeta_{1, i}, \zeta_{2, i} \right) 
\nn
&& \qquad \qquad 
=
\left( \Phi^{(i)}(g_i; v_{1,1}, x_{1,1};  \ldots; v_{k_i, i}, x_{k_i, i}) \right)_k, 
\end{eqnarray} 
is invariant with respect to the action of the group 
 $\left({\rm Aut} \; \Oo\right)^{\times (\theta_l-r)}_{z_1, \ldots, z_{\theta_l-r}}$
\begin{eqnarray}
(z_1, \ldots, z_{\theta_l-r})  
\mapsto (\widetilde{z}_1, \ldots, \widetilde{z}_{\theta_l-r})    
= 
(\varrho(z_1), \ldots, \varrho(z_{\theta_l-r})).  && 
\end{eqnarray}  
\hfill $\square$
\end{corollary}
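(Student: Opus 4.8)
The plan is to deduce the statement directly from Proposition \ref{pupa} together with Proposition \ref{tolsto}, exploiting the fact that the complex spaces $C_{m_i}^{k_i}(V,\W^{(i)})$ sit inside the corresponding $\W^{(i)}$-valued rational-function spaces. First I would recall that, by Definition \ref{initialspace} and Definition \ref{defspace}, every element $\Phi^{(i)}(g_i; v_{1,i}, x_{1,i}; \ldots; v_{k_i,i}, x_{k_i,i}) \in C_{m_i}^{k_i}(V,\W^{(i)})$ is in particular a linear map of $V^{\otimes k_i}$ into $\W^{(i)}_{x_{1,i}, \ldots, x_{k_i,i}}$, namely an element of the $\W^{(i)}$-space carrying the additional regularized transversality conditions of Subsection \ref{composable}. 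Consequently the collection $(\Phi^{(1)}, \ldots, \Phi^{(l)})$ satisfies the hypotheses of Proposition \ref{pupa}, and the sequence of products $\widehat{\Theta}$ appearing in \eqref{posta} is exactly the product \eqref{gendef}, \eqref{gendefgen} formed from these maps.

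Next I would invoke Proposition \ref{tolsto}, by which this product belongs to $C^{\theta_l-r}_{\mu_l-t}(V, \W^{(1,\ldots,l)})$, a subspace of $\W^{(1,\ldots,l)}_{z_1, \ldots, z_{\theta_l-r}}$. Applying Proposition \ref{pupa} to the $(\rho_1, \ldots, \rho_l)$-product then yields that \eqref{posta} is canonical under the action $(z_1, \ldots, z_{\theta_l-r}) \mapsto (\varrho(z_1), \ldots, \varrho(z_{\theta_l-r}))$ of the group $({\rm Aut}\; \Oo)^{\times (\theta_l-r)}_{z_1, \ldots, z_{\theta_l-r}}$, for generic elements of the quasi-conformal grading-restricted vertex algebra $V$. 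Since the invariance of the bare product has already been established at the level of $\W$-spaces, the only issue is to propagate it through the complex structure.

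The one point requiring care, and which I regard as the main obstacle, is to verify that the defining structure of the complex spaces — the regularized transversality to the $m_i$ vertex operators \eqref{poper} attached on the transversal sections — is itself preserved under the coordinate change $\varrho$, so that the invariant product remains an element of the same space $C^{\theta_l-r}_{\mu_l-t}$ rather than merely of the ambient $\W^{(1,\ldots,l)}$-space. This follows from Proposition \ref{nezc} (and the underlying Proposition \ref{pupa0}): for a quasi-conformal $V$ both the maps $\Phi^{(i)}$ and the operators $\omega_{W^{(i)}}$ of \eqref{poper} are independent of the foliation-preserving changes of local coordinates, so the regularized transversality conditions \eqref{Inm} transform consistently and the construction \eqref{ourbicomplex} is canonical. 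Combining the canonical property of the complex spaces with the invariance of the product from Proposition \ref{pupa} then completes the argument; the verification is essentially a matter of tracking that the coordinate change commutes with the projections and intertwining operators entering \eqref{gendefgen}, which the quasi-conformality assumption guarantees.
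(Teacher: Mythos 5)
Your proposal is correct and follows essentially the same route as the paper: it combines the $\W$-space level invariance of the product from Proposition \ref{pupa} (Subsection \ref{properties1}) with the observation, as in Proposition \ref{nezc} (via Proposition \ref{pupa0} and quasi-conformality), that the vertex operators to which the $\Phi^{(i)}$ are regularized transversal are themselves invariant under independent coordinate changes, so the complex-space structure is preserved. The paper's own proof is exactly this two-step argument, so no further comparison is needed.
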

\subsection{Leibniz rule for the multiple product}   
\label{toto}
 In Proposition \ref{tolsto} we proved that 
the sequence of multiple products \eqref{bardos} of 
  spaces $C_{m_i}^{k_i}\left(V, \W^{(i)} \right)$ elements 
 belongs to $C^{\theta_l-r}_{\mu_l-t}$ $\left(V, \W^{(1, \ldots, l)} \right)$.  
Thus, the product admits the action of the coboundary operators 
$\delta^{\theta_l-r}_{\mu_l-t}$ and $\delta^{2-r, i}_{ex-t, i}$  
defined in  
\eqref{deltaproduct} and \eqref{halfdelta}.  
As we showed in Subsection \ref{adapta},  
 in contrast to the case of $\W^{(i)}$-spaces, 
where the sequence of $(\rho_1, \ldots, \rho_l)$-products 
leads to the tensor product $\W^{(1, \ldots, l)}$,  
the products \eqref{toporno} of $C^{k_i}_{m_i}$-spaces result in the same kind of 
space $C^k_m\left(V, \W^{(1, \ldots, l)} \right)$ 
defined on $\W^{(1, \ldots, l)}$.   
%
 The coboundary operators \eqref{deltaproduct}, \eqref{halfdelta}  
 have a version of Leibniz law with respect to the product 
\eqref{bardos}.
We will use it in Section \ref{gv} while deriving the cohomology classes. 
 Recall the notations $n_i$ of Subsection \ref{kapusta}.   
\begin{proposition}
\label{tosya}
For $\Phi^{(i)}(g_i; v_{1, 1}, x_{1, i};  \ldots; v_{k_i, i}, x_{k_i, i})  
\in C_{m_i}^{k_i}\left(V, \W^{(i)} \right)$, $1\le i \le l$,  
the action of the coboundary operator  
$\delta_{\mu_l-t}^{\theta_l-r}$ \eqref{deltaproduct}  
(and $\delta^{2-r, i}_{ex-t, i}$ \eqref{halfdelta})  
 on the sequence of $(\rho_1, \ldots, \rho_l)$-products 
\eqref{bardos}, $l \ge 1$, is given by 
\begin{eqnarray}
\label{leibniz}
 \delta_{\mu_l-t}^{\theta_l-r}    
 \Theta(f_1, \ldots, f_l; g_1, \ldots, g_l; z_1, v_1; 
\ldots; v_{\theta_l-r}, z_{\theta_l-r}; 
\rho_1, \ldots, \rho_l; \zeta_{1, i}, \zeta_{2, i} )_k \qquad \qquad   && 
\nn
 = 
 \sum\limits_{i=1}^l \cdot_{\rho_1, \ldots, \rho_l}  (-1)^{k_i-r_i}
\delta^{k_i-r_i}_{m_i-t_i}  
\Phi^{(i)} (g_i; v_{n_i+1}, z_{n_i+1};  \ldots; v_{n_i+k_i-r_i}, z_{n_i+k_i-r_i})_k. \quad &&
\end{eqnarray}
\end{proposition}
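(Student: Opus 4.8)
The plan is to verify the identity \eqref{leibniz} at the level of the absolutely convergent matrix elements $\langle w', \cdot\rangle$, since by Propositions \ref{derga} and \ref{ccc} such matrix elements determine the underlying differential form uniquely. Both sides are well-defined elements of the same family $C^{\theta_l-r+1}_{\mu_l-t-1}\left(V, \W^{(1, \ldots, l)}\right)$: the left-hand side by Proposition \ref{tolsto} combined with Proposition \ref{cochainprop}, and the right-hand side because each $\delta^{k_i-r_i}_{m_i-t_i} \Phi^{(i)}$ raises the upper index by one and lowers the lower index by one, after which the product $\cdot_{\rho_1, \ldots, \rho_l}$ again lands in the corresponding family by Proposition \ref{tolsto}. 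It therefore suffices to expand $\delta^{\theta_l-r}_{\mu_l-t} \Theta$ through Definition \ref{codeff}, i.e. as the action of the $E$-operator vector \eqref{mathe}, and to reorganize the result into the stated sum over $i$.

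First I would write $\delta^{\theta_l-r}_{\mu_l-t}\Theta = \mathcal E^{(1, \ldots, l)}.\Theta$ and substitute the product representation \eqref{gendef}, \eqref{gendefgen} of $\Theta$ as a $\widehat{\mathcal R}$-regularized sum over the factors $\langle w'_i, Y^{W^{(i)}}_{W^{(i)}V'}(\Phi^{(i)}(\ldots; u, \zeta_{1, i}), \zeta_{2, i})\, f_i.\overline{u}\rangle$. The key step is to commute the three components of $\mathcal E^{(1, \ldots, l)}$ past the intertwining operators $Y^{W^{(i)}}_{W^{(i)}V'}$ and the handle data $u, \overline{u}$, so that each $E$-operator acts on a single factor $\Phi^{(i)}$ inside its own intertwiner; this is exactly the factorization established in Lemma \ref{obvlem}, applied here to the coboundary insertion rather than to the transversal regularization. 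Concretely, the leftmost module entry $E^{(1)}_{W^{(i)}}$ attaches its extra vertex operator to the first factor, the rightmost entry $E^{W^{(i)}; (1)}_{W^{(i)}V'}$ to the last factor, while the internal insertions $\sum_{j=1}^{\theta_l-r}(-1)^j E^{(2)}_{V; \one_V}(j)$ split according to the block decomposition of the combined index set: positions $n_i+1, \ldots, n_i+k_i-r_i$ belong to the $i$-th factor, and on each block the insertion reproduces the connection term $\omega_V(v_{j, i}, \cdot)\,v_{j+1, i}$ of $\delta^{k_i-r_i}_{m_i-t_i}\Phi^{(i)}$ in \eqref{hatdelta1}.

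Collecting the left, internal, and right contributions attached to a fixed factor $i$ then assembles precisely the three groups of summands of \eqref{hatdelta1} for $\delta^{k_i-r_i}_{m_i-t_i}\Phi^{(i)}$, wrapped inside the product $\cdot_{\rho_1, \ldots, \rho_l}$ over the remaining factors, so that the right-hand side of \eqref{leibniz} emerges term by term. The delicate part of this reorganization is the reconciliation of the global position signs $(-1)^j$ carried by the entries of \eqref{mathe} with the per-factor prefactor $(-1)^{k_i-r_i}$ appearing in \eqref{leibniz}: writing the global index as $j = n_i + (j - n_i)$ separates a cumulative block sign from the local sign of \eqref{hatdelta1}, and I would absorb the cumulative part by invoking the shuffle symmetry of Lemma \ref{tarusa} together with the ordered structure of the product to restore the canonical ordering of the factors, which fixes the residual sign to the stated $(-1)^{k_i-r_i}$. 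The invariance under changes of formal parameters (Proposition \ref{pupa}) guarantees that placing the inserted vertex algebra element in its canonical position does not alter the matrix element. Finally, the same computation with $\mathcal E^{(i)}$ replaced by $\mathcal E_{ex, i}$ \eqref{mathe2}, and the internal sum truncated to $j = 1, 2$, yields the corresponding Leibniz identity for $\delta^{2-r, i}_{ex-t, i}$ on the short complexes \eqref{hat-complex-half}.

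The principal obstacle I anticipate is twofold. First, among the internal insertions $E^{(2)}_{V; \one_V}(j)$ there are, a priori, connection terms straddling two consecutive blocks, coupling the last element of factor $i$ to the first element of factor $i+1$, and these must be shown to be absent; they are, because in the product each $\Phi^{(i)}$ is enclosed in its own intertwining operator $Y^{W^{(i)}}_{W^{(i)}V'}(\,\cdot\,, \zeta_{2, i})$ and the handle pairing in $u, \overline{u}$ separates consecutive factors, so that no vertex algebra element of factor $i$ can be inserted into factor $i+1$. Second, the termwise commutation of the insertions with the regularization $\widehat{\mathcal R}$, with the elimination $\widehat{\mathcal S}$ of coinciding parameters, and with the summation over the $V$-basis and over the gradings must be justified; this is legitimate because the rearranged series converges absolutely on the sewing domain by Propositions \ref{derga} and \ref{ccc}, which licenses the interchange and, together with the sign reconciliation above, completes the identification with the right-hand side of \eqref{leibniz}.
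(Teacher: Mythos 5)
Your overall strategy (expand $\delta^{\theta_l-r}_{\mu_l-t}\Theta$ via the $E$-operator vector, push the insertions into the individual factors of the product, and reassemble per-factor coboundaries) follows the same skeleton as the paper's proof, but it breaks down at exactly the point you flag as your ``first obstacle,'' and your resolution of that obstacle is wrong. The coboundary \eqref{deltaproduct} acts on $\Theta$ as an element of $C^{\theta_l-r}_{\mu_l-t}\left(V, \W^{(1, \ldots, l)}\right)$, so by \eqref{hatdelta1} its connection sum runs over \emph{all} consecutive pairs $(j, j+1)$ of the global argument list, with no knowledge of the block structure; the terms in which $\omega_V(v_j, z_j - z_{j+1})v_{j+1}$ couples the last element of block $i$ to the first element of block $i+1$ are genuinely present, as are the terms in which module operators $\omega_{W^{(s)}}$ act across factors (the second and third terms of \eqref{partyanki}, carrying the Kronecker deltas $\delta_{s,i}$). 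They cannot be declared absent on the grounds that each $\Phi^{(i)}$ sits inside its own intertwiner: the coboundary is applied to the product as a whole, before any factorization through the intertwiners has been established, so arguing from the factorized form assumes the Leibniz formula you are trying to prove. Moreover, these cross terms are not noise to be discarded: they are precisely what must be converted into the per-factor boundary terms $\omega_{W^{(i)}}(\cdot)\,\Phi^{(i)}$ appearing in each $\delta^{k_i-r_i}_{m_i-t_i}\Phi^{(i)}$ on the right-hand side of \eqref{leibniz}; if they vanished, the right-hand side would contain contributions with no counterpart on the left, and the identity would be false rather than proved.

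The paper's proof spends essentially all of its length on this conversion: starting from the third term of \eqref{partyanki}, it uses the intertwiner definition \eqref{wprop} and locality \eqref{porosyataw} to commute $\omega_{W^{(s)}}$ past $e^{\zeta_{2,i} L_{W^{(i)}}(-1)} Y_{W^{(i)}}\left(f_i.\overline{u}, -\zeta_{2,i}\right)$, inserts a basis $\widetilde{w}_i$ of $W^{(i)}$, re-expresses the result through intertwiners $Y^{W^{(i)}}_{W^{(i)} W^{(i+1)}}$, invokes commutativity of $L_{W^{(i)}}(-1)$ and $L_{W^{(i+1)}}(-1)$ together with the adjoint relation for intertwiners, and finally re-identifies shifted parameters, taking $z_{n_{i+1}} = z_{n_{i+1}-1} + \zeta_{2, i-1} - \zeta_{2, i+1}$ by arbitrariness of the inserted element. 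None of this machinery appears in your proposal, and Lemma \ref{obvlem} cannot substitute for it: that lemma factorizes compositions with the operators $E^{(m_1+\ldots+m_l)}_{W^{(1,\ldots,l)}}$ (the transversality data), not connection-term insertions $\omega_V(v_j, z_j - z_{j+1})v_{j+1}$, which have a different algebraic shape. Your sign reconciliation via the shuffle symmetry of Lemma \ref{tarusa} is likewise only a plausibility sketch (in the paper the sign $(-1)^{k_i-r_i}$ emerges from the direct computation), but the essential gap is the missing treatment of the straddling and cross-module terms.
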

\begin{proof}
Due to \eqref{deltaproduct} 
the action of 
$\delta_{\mu_l-t}^{\theta_l-r}$  
on $\Theta(f_1, \ldots, f_l$; $ g_1, 
\ldots, g_l$; $ z_1, v_1$;$ \ldots$; $  v_{\theta_l-r}, z_{\theta_l-r}$; 
$\rho_1, \ldots, \rho_l$; $ \zeta_{1, i}, \zeta_{2, i} )_k$,  
is given by  
(we assume, as before, that the vertex operator 
$\omega_V (v_j, z_j - z_{j+1})$   
does not act on $(u, \zeta_{1, i})$)   
\begin{eqnarray*}
  && \delta_{\mu_l-t}^{\theta_l-r}  
\Theta(f_1, \ldots, f_l; g_1, \ldots, g_l; v_1, z_1; \ldots; 
 v_{\theta_l-r}, z_{\theta_l-r}; \rho_1, \ldots, \rho_l; \zeta_{1, i}, \zeta_{2, i})_k 
\nn
  &&=
  \sum_{j=1}^{\theta_l-r}(-1)^j \;    
   \Theta (f_1, \ldots, f_l; g_1, \ldots, g_l; 
v_1, z_1; \ldots;  v_{j-1}, z_{j-1}; 
\nn 
  && \omega_V (v_j, z_j - z_{j+1}) v_{j+1}, z_{j+1}; v_{j+2}, z_{j+2};   
\ldots;  v_{\theta_l-r}, z_{\theta_l-r};    
\rho_1, \ldots, \rho_l; \zeta_{1, i}, \zeta_{2, i})_k  
\end{eqnarray*}
\begin{eqnarray*}
   +   
  \Theta (f_1, \ldots, f_l; g_1, \ldots, g_l; \omega_{W^{(1)}} \left(v_1, z_1\right); 
v_2, z_2;  
\ldots; v_{\theta_l-r}, z_{\theta_l-r}; 
 \rho_1, \ldots, \rho_l; \zeta_{1, i}, \zeta_{2, i})_k    &&
\end{eqnarray*}
\begin{eqnarray*}
  && + (-1)^{\theta_l-r+1}      
 \Theta(f_1, \ldots, f_l; g_1, \ldots, g_l; 
\omega_{W^{(l)}}( v_{\theta_l-r+1}, z_{\theta_l-r+1});    
v_1, z_1;  
 \ldots; 
\nn
  && v_{\theta_l-r}, z_{\theta_l-r}; 
 \rho_1, \ldots, \rho_l; \zeta_{1, i}, \zeta_{2, i} )_k. 
\end{eqnarray*} 
Recall the definition of the enumeration $n_i$ of $v$ and $z$-parameters 
defined in Subsection \ref{kapusta}. 
Using \eqref{gendef} we see that the above is equivalent to 
\begin{eqnarray*}
      \sum_{j=1}^{\theta_l-r}(-1)^j        
\mathcal R \prod_{i=1}^l
 \rho^k_i 
 \langle w'_i, Y^{W^{(i)}}_{W^{(i)}V'} \left(  
\Phi^{(i)}(g_i; v_{n_i+1}, z_{n_i+1};  \ldots;     
\right.
 \qquad \qquad \qquad \qquad  &&
\nn 
\left. 
\; \omega_V (v_j, z_j - z_{j+1})\; v_{j+1}, z_{j+1}; 
 v_{j+2}, z_{j+2};   
\ldots;   
v_{n_i+k_i-r_i}, z_{n_i+k_i-r_i};       
 u, \zeta_{1, i} ), 
\zeta_{2, i} \right)\; f_i.\overline{u} \rangle,  &&
\nn
 +   
\mathcal R  \prod_{i=1}^l
 \rho^k_i  \langle w_i', 
  Y^{W^{(i)}}_{W^{(i)}V'} \left( \left(
 \left(\omega_{W^{(1)}} (v_1, z_1) \right)^{\delta_{i, 1}} \right.  \right. 
\qquad \qquad \qquad \qquad \qquad \qquad  &&
\nn
\left. \left. \qquad \qquad 
\Phi^{(i)}(g_i; v_{n_i+1+ \delta_{i, 1}}, z_{n_i+1+\delta_{i, 1}};  \ldots;
v_{n_i+k_i-r_i}, z_{n_i+k_i-r_i};  
  u, \zeta_{1, i})\right),  
\zeta_{2, i} \right)\; f_i.\overline{u}   \rangle  &&
\nn
\qquad  +   (-1)^{\theta_l-r+1}       
\mathcal R  \prod_{i=1}^l
  \rho^k_i   \langle w_i', 
  Y^{W^{(i)}}_{W^{(i)}V'} \left(  \right. 
 \left(\left(\omega_{W^{(l)}}(v_{n_{i+1}+1}, z_{n_{i+1}+1}) 
\right)^{\delta_{i,1}} \right. &&
\end{eqnarray*}
\begin{eqnarray}
\label{partyanki}
&& \quad  \left. \left. 
  \Phi^{(i)}(g; v_{n_i+1}, z_{n_i+1};  \ldots;      
 v_{n_i+k_i-r_1}, z_{n_i+k_i-r_i};  
  u, \zeta_{1, i})\right),  
\zeta_{2, i} \right)\; f_i.\overline{u}  \rangle. 
\end{eqnarray}
Consider the third term in \eqref{partyanki} 
\begin{eqnarray*}
 \sum\limits_{s=2}^l  
\mathcal R  \prod_{i=1}^l
 \rho^k_i \langle w_i', 
  Y^{W^{(i)}}_{W^{(i)}V'} \left( \left(
 \left( \omega_{W^{(s)}}(v_{n_{i+1}+1},  
z_{n_{i+1}+1})   \right)^{\delta_{s, i}} \right. \right. 
\qquad \qquad \qquad &&
\nn
\left. \left. 
\Phi^{(i)}(g_i; v_{n_i+1}, z_{n_i+1};  \ldots;       
 v_{n_i+k_i-r_i}, z_{n_i+k_i-r_i};   
  u, \zeta_{1, i}) \right), 
\zeta_{2, i} \right)\; f_i.\overline{u}   \rangle \qquad  &&
\nn 
=
 \sum\limits_{s=2}^l  
\mathcal R  \prod_{i=1}^l 
 \rho^k_i \langle w_i', e^{ \zeta_{2, i} L_{ W^{(i)} }(-1) }   
  Y_{W^{(i)}} \left( f_i.\overline{u}, -\zeta_{2, i} \right) \;  
\left(
\omega_{W^{(s)}}(v_{n_{i+1}+1}, z_{n_{i+1}+1})  
\right)^{\delta_{s, i}}    &&
\nn
 \Phi^{(i)} (g_i; 
v_{n_i+1}, z_{n_i+1};  \ldots;      
 v_{n_i+k_i-r_1}, z_{n_i+k_i-r_i};     
  u, \zeta_{1, i}) 
  \rangle  \qquad &&     
\nn
=
 \sum\limits_{s=2}^l  
\mathcal R  \prod_{i=1}^l 
 \rho^k_i \langle w_i', e^{ \zeta_{2, i} L_{ W^{(i)} }(-1) }   
\left(
\omega_{W^{(s)}}(v_{n_{i+1}+1}, z_{n_{i+1}+1})  
  \right)^{\delta_{s, i}}  
  Y_{W^{(i)}} \left( f_i.\overline{u}, -\zeta_{2, i} \right)  &&
\nn
\Phi^{(i)} ( 
g_i; 
 v_{n_i+1}, z_{n_i+1};  \ldots;      
 v_{n_i+k_i-r_i}, z_{n_i+k_i-r_i};   
  u, \zeta_{1, i}) 
  \rangle. &&      
\end{eqnarray*}
Due to the definition \eqref{wprop} of the intertwining operator 
and the locality property 
 of vertex operators we obtain 
\begin{eqnarray*} 
 && \sum\limits_{s=2}^l 
\mathcal R  \prod_{i=1}^l 
 \rho^k_i \langle w_i',  
\left(
\omega_{W^{(s)}}(v_{n_{i+1}+1}, z_{n_{i+1}+1}+\zeta_{2, i})  
  \right)^{\delta_{s, i}}  
e^{ \zeta_{2, i} L_{ W^{(i)} }(-1) }  
\nn 
&& \qquad Y_{W^{(i)}} \left( f_i.\overline{u}, -\zeta_{2, i} \right)  
\Phi^{(i)} (
g_i; 
v_{n_i+1}, z_{n_i+1};  \ldots;       
  v_{n_i+k_i-r_i}, z_{n_i+k_i-r_i};     
  u, \zeta_{1, i}) 
  \rangle. 
\end{eqnarray*}
The insertion an arbitrary vertex algebra module  
$W^{(i)}$-basis $\widetilde{w}_i$, and use of   
the definition of the intertwining operator \eqref{wprop} results  
\begin{eqnarray*} 
  &&\sum\limits_{\widetilde{w}_i\in W^{(i)}}  
\sum\limits_{s=2}^l 
\mathcal R  \prod_{i=1}^l 
 \rho^k_i \langle w'_i,   
 \left(\omega_{W^{(s)}} (
 v_{n_{i+1}+1}, z_{n_{i+1}+1}+\zeta_{2, i}) \right)^{\delta_{s, i}} 
\widetilde{w}_i\rangle 
\nn
&& \langle  \widetilde{w}'_i, 
e^{ \zeta_{2, i} L_{ W^{(i)} }(-1) }  
  Y_{W^{(i)}} \left( f_i.\overline{u}, -\zeta_{2, i} \right)  
\Phi^{(i)}(
g_i; 
 v_{n_i+1}, z_{n_i+1};  \ldots;
\nn       
 && \qquad \qquad \qquad \qquad \qquad \qquad \qquad
v_{n_i+k_i-r_i}, z_{n_i+k_i-r_i};  
  u, \zeta_{1, i}) 
  \rangle   
\end{eqnarray*}
\begin{eqnarray*} 
=
\sum\limits_{s=2}^l \sum_{
\widetilde{w}_i\in W^{(i)}
 \atop k \in \Z}    
\mathcal R  \prod_{i=1}^l 
 \rho^k_i 
\langle \widetilde{w}'_i, 
  Y^{W^{(i)}}_{W^{(i)}V'} \left( 
\Phi^{(i)} (
g_i; 
 v_{n_i}, z_{n_i};  \ldots;  \right. \qquad \qquad  \qquad \qquad && 
\nn
   \left. \qquad 
 v_{n_{i+1}-1}, z_{n_{i+1}-1}; 
  u, \zeta_{1, i}), \zeta_{2, i} \right) f_i.\overline{u}  
  \rangle   
\langle w'_i,   
 \left(\omega_{W^{(s)}} (
 v_{n_{i+1}}, z_{n_{i+1}}+\zeta_{2, i}) \right)^{\delta_{s, i}} \widetilde{w}_i\rangle      
&&
\end{eqnarray*}
\begin{eqnarray*} 
&& = 
\sum\limits_{\widetilde{w}_i\in W^{(i)}}  
\sum\limits_{s=2}^l  
\mathcal R  \prod_{i=1}^{l-1} 
 \rho^k_{i+1} 
\langle w'_i, 
\left(\omega_{W^{(s)}} (
 v_{n_{i+1}-1}, z_{n_{i+1}-1}+\zeta_{2, i} \right)^{\delta_{s, i}}
  \widetilde{w}_i \rangle  
\nn
  && \langle  w'_{i+1}, 
  Y^{W^{(i+1)}}_{W^{(i+1)}V'} \left(
\Phi^{(i+1)} (
g_{i+1}; 
 v_{n_{i+1}}, z_{n_{i+1}};  \ldots;   \right. 
\nn   
 && \qquad \qquad \qquad \qquad \left. 
v_{n_{i+2}-1}, z_{n_{i+2}-1};  
  u, \zeta_{1, i+1}), \zeta_{2, i+1} \right) f_{i+1}.\overline{u}   
  \rangle 
\nn 
&&
 = 
\sum\limits_{s=2}^l 
\sum_{
\widetilde{w}_i\in W^{(i)} }   
\mathcal R  \prod_{i=1}^{l-1} 
 \rho^k_{i+1} 
\langle w'_i, 
 \left(\omega_{W^{(s)}} (
 v_{n_{i+1}-1}, z_{n_{i+1}-1}+\zeta_{2, i} \right)^{\delta_{s, i}} 
\nn
 && \qquad \qquad \qquad \qquad \qquad \qquad 
Y^{W{(i)}}_{W^{(i)} W^{(i+1)}}(\widetilde{w}_i, \zeta)\; 
w_{i+1}\rangle  
\end{eqnarray*}
\begin{eqnarray*}
 && \langle  w'_{i+1}, 
  Y^{W^{(i+1)}}_{W^{(i+1)}V'} 
\left( 
\Phi^{(i+1)} ( 
g_{i+1}; 
 v_{n_{i+1}}, z_{n_{i+1}};  \ldots;  \right.
\nn
&& \qquad \qquad \qquad \left.     
 v_{n_{i+2}-1}, z_{n_{i+2}-1};  
  u, \zeta_{1, i+1}), \zeta_{2, i+1} \right) f_{i+1}.\overline{u}   
  \rangle.    
\end{eqnarray*}
Now eliminate the basis $w_{i+1}$ to get
\begin{eqnarray*} 
=   
\sum\limits_{s=1}^l  
\mathcal R  \prod_{i=1}^{l-1} 
 \rho^k_{i+1} 
\langle w'_i, e^{-L_{W^{(s-1)}}(-1)(-z_{n_{i+1}-1}-\zeta_{2, i})} 
e^{L_{W^{(s-1)}}(-1)(-z_{n_{i+1}-1}-\zeta_{2, i})} &&
\nn
\left(\omega_{W^{(s-1)}} (
 v_{n_{i+1}-1},  z_{n_{i+1}-1}+\zeta_{2, i}) \right)^{\delta_{s, i+1}} 
 Y^{W{(i)}}_{W^{(i)} W^{(i+1)}}(\widetilde{w}_i, \zeta) 
\qquad \qquad  \qquad \qquad \qquad \qquad  && 
\nn
  \; Y^{W^{(i+1)}}_{W^{(i+1)}V'} \left(  
\Phi^{(i+1)} (
g_{i+1}; 
 v_{n_{i+1}}, z_{n_{i+1}};  \ldots;      
 v_{n_{i+2}-1}, z_{n_{i+2}-1};  
  u, \zeta_{1, i+1}), \zeta_{2, i+1} \right) f_{i+1}.\overline{u}    
  \rangle   &&
\nn
=   
\sum\limits_{s=1}^l  
\mathcal R  \prod_{i=1}^{l-1} 
 \rho^k_{i+1} 
\langle w'_i, e^{-L_{W^{(s-1)}}(-1)(-z_{n_{i+1}-1}-\zeta_{2, i})} 
  \left(Y^{W^{(i)}}_{W^{(i)}W^{(i)}}  ( \right. 
Y^{W{(i)}}_{W^{(i)} W^{(i+1)}}(\widetilde{w}_i, \zeta) && 
\nn
   Y^{W^{(i+1)}}_{W^{(i+1)}V'} \left(  
\Phi^{(i+1)} (
g_{i+1}; 
 v_{n_{i+1}}, z_{n_{i+1}};  \ldots;      
 v_{n_{i+2}-1}, z_{n_{i+2}-1};  
  u, \zeta_{1, i+1}), \zeta_{2, i+1} \right) f_{i+1}.\overline{u}, \qquad &&
\nn
\left. 
   -\zeta) \right)^{\delta_{s, i+1}} v_{n_{i+1}-1}    
  \rangle   &&
\end{eqnarray*}
\begin{eqnarray*} 
&&
=   
\sum\limits_{s=1}^l  
\mathcal R\prod_{i=1}^{l-1} 
 \rho^k_{i+1} 
\langle w'_i, e^{-L_{W^{(s-1)}}(-1)(-z_{n_{i+1}-1}-\zeta_{2, i})} 
\nn
&&
  \left(Y^{W^{(i)}}_{W^{(i)}W^{(i)}}  ( 
Y^{W{(i)}}_{W^{(i)} W^{(i+1)}}(\widetilde{w}_i, \zeta)  
e^{L_{W^{(i+1)}}(-1)(-\zeta_{2, i+1})}
   Y_{W^{(i+1)}}(v_{n_{i+1}-1},  \zeta) \right.
\nn
&&
\left.  
\Phi^{(i+1)} (
g_{i+1}; 
 v_{n_{i+1}}, z_{n_{i+1}};  \ldots;      
 v_{n_{i+2}-1}, z_{n_{i+2}-1};  
  u, \zeta_{1, i+1} ) f_{i+1}.\overline{u}, -\zeta)
   \right)^{\delta_{s, i+1}}    
  \rangle   
\nn
&&
=   
\sum\limits_{s=1}^l   
\mathcal R \prod_{i=1}^{l-1} 
 \rho^k_{i+1} 
\langle w'_{i+1}, e^{-L_{W^{(i)}}(-1)(-z_{n_{i+1}-1}-\zeta_{2, i})} 
\nn
&&
e^{L_{W^{(i+1)}}(-1)(-\zeta_{2, i+1})}
   Y_{W^{(i+1)}}(v_{n_{i+1}-1},  \zeta)
\nn
&&
 \left.  
\Phi^{(i+1)} (
g_{i+1}; 
 v_{n_{i+1}}, z_{n_{i+1}};  \ldots;      
 v_{n_{i+2}-1}, z_{n_{i+2}-1};  
  u, \zeta_{1, i+1}) f_{i+1}.\overline{u}, -\zeta)
   \right)^{\delta_{s, i+1}} 
  \rangle,   
\end{eqnarray*}
where $\zeta=-z_{n_{i+1}-1}-\zeta_{2, i}$. 
Above we have made use of 
 the commutativity of $L_{W^{(i)}}(-1)$ and $L_{W^{(i+1)}}(-1)$, 
and the formula relating the intertwining operators in the adjoint positions. 
Due to locality 
of vertex operators, 
and arbitrariness of $v_{k+1}\in V$ and $z_{k+1}$, 
it is always possible to take 
$\omega_{W^{(s-1)}} (
 v_{n_{i+1}-1}, z_{n_{i+1}-1}+\zeta_{2, i-1} - \zeta_{2, i+1})  
  =\omega_{W^{(s-1)}}(v_{n_{i+1}}, z_{n_{i+1}})$,     
for $v_{n_{i+1}}= v_{n_{i+1}-1}$, 
 $z_{n_{i+1}}= z_{n_{i+1}-1}+\zeta_{2, i-1} - \zeta_{2, i+1}$.   
We repete the same operations with the second term of \eqref{partyanki}. 
Combining the action of $\delta^{k_i}_{m_i}$ on $\Phi^{(i)}$,  
gives \eqref{leibniz} due to \eqref{gendef}, \eqref{gendefgen}.  
The statement of the proposition for $\delta^{2, i}_{ex, i}$
 \eqref{halfdelta} can be checked in the similar way.  
\end{proof}
Next, we prove the following 
\begin{proposition}
The sequence of products \eqref{bardos} extends the property  
\eqref{deltacondition} 
of the families of cochain  
complexes 
\eqref{hat-complex} and \eqref{hat-complex-half}  
to all sequences of products 
$\cdot_{\rho_1, \ldots, \rho_l}$ $C^{k_i}_{m_i}\left(V, \W^{(i)} \right)$,   
$k_i \ge0$, $m_i \ge0$, $1 \le i \le l$.  
\end{proposition}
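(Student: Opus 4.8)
The plan is to deduce the squared-coboundary relation $\delta^{\theta_l-r+1}_{\mu_l-t-1} \circ \delta^{\theta_l-r}_{\mu_l-t}=0$ on the product space from the per-factor nilpotency \eqref{deltacondition} together with the Leibniz rule of Proposition \ref{tosya}, exactly as one establishes that the total differential squares to zero on a tensor product of complexes. By Proposition \ref{tolsto} the sequence of products $\Theta = \cdot_{\rho_1, \ldots, \rho_l}\left(\Phi^{(1)}, \ldots, \Phi^{(l)}\right)$ lies in $C^{\theta_l-r}_{\mu_l-t}\left(V, \W^{(1, \ldots, l)} \right)$, so both $\delta^{\theta_l-r}_{\mu_l-t}$ and $\delta^{\theta_l-r+1}_{\mu_l-t-1}$ act on it and their composition is well defined; the goal is to show this composition vanishes identically.

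First I would apply the Leibniz formula \eqref{leibniz} to $\delta^{\theta_l-r}_{\mu_l-t}\Theta$, rewriting it as a signed sum over $i$ of products in which $\delta^{k_i-r_i}_{m_i-t_i}$ has acted on the single factor $\Phi^{(i)}$ while the remaining factors are left untouched. Each such summand again belongs to a space of the form $C^{\theta_l-r+1}_{\mu_l-t-1}$, so I would apply \eqref{leibniz} a second time to $\delta^{\theta_l-r+1}_{\mu_l-t-1}$ of each term, producing a double sum indexed by pairs $(i,j)$ according to whether the second coboundary falls on the already differentiated factor ($j=i$) or on a distinct factor ($j\neq i$). The diagonal terms $j=i$ contain the composition $\delta^{k_i-r_i+1}_{m_i-t_i-1}\circ\delta^{k_i-r_i}_{m_i-t_i}$ applied to $\Phi^{(i)}$, which vanishes by \eqref{deltacondition} of Proposition \ref{cochainprop}. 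For $j\neq i$ the two coboundaries act on different tensor factors and therefore commute as maps, so the off-diagonal contributions must be shown to cancel pairwise, the term indexed by $(i,j)$ against the term indexed by $(j,i)$.

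The main obstacle is precisely this pairwise cancellation, which rests entirely on careful sign bookkeeping: between the first and second application of \eqref{leibniz} the upper grading of the differentiated factor is raised by one, and one must verify that the degree shifts, the alternating internal signs $\sum_j(-1)^j$ carried by each $\delta$ in \eqref{hatdelta1}, and the ordering of the factors combine to supply exactly the relative sign matching $\cdot_{\rho_1, \ldots, \rho_l}\,\delta\Phi^{(i)}\,\delta\Phi^{(j)}$ with $-\cdot_{\rho_1, \ldots, \rho_l}\,\delta\Phi^{(j)}\,\delta\Phi^{(i)}$. One also has to confirm that the automorphism elements $f_i$, $g_i$ and the elimination of coinciding parameters carried by $\widehat{\mathcal R}$ do not interfere with these signs. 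Finally, the identical double-application argument, now using the Leibniz rule for $\delta^{2-r, i}_{ex-t, i}$ from Proposition \ref{tosya} and the relation \eqref{porto1} in place of \eqref{deltacondition} for the diagonal terms, extends the statement to the family of short complexes \eqref{hat-complex-half}.
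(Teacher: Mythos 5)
Your proposal is correct and takes essentially the same route as the paper: Proposition \ref{tolsto} places the product in $C^{\theta_l-r}_{\mu_l-t}\left(V, \W^{(1,\ldots,l)}\right)$, and then the Leibniz rule \eqref{leibniz} together with the per-factor nilpotency \eqref{deltacondition} (and \eqref{porto1} for the short complex) yields the vanishing of $\delta\circ\delta$ on the product. You are in fact more explicit than the paper, whose proof simply asserts the two displayed identities from \eqref{leibniz} and the chain-cochain property of the factors; the expansion into a double sum over pairs $(i,j)$, the diagonal vanishing, and especially the pairwise cancellation of the off-diagonal terms with its sign bookkeeping (genuinely delicate, since the sign $(-1)^{k_i-r_i}$ in \eqref{leibniz} as literally written is per-factor rather than cumulative over preceding degrees) are left entirely implicit there, so the step you flag as the main obstacle is precisely what the paper glosses over.
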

\begin{proof}
For $\Phi^{(i)} \in C^{k_i}_{m_i}\left(V, \W^{(i)} \right)$ 
 we proved in Proposition \ref{tolsto} that the sequence of products  
$\cdot_{\rho_1, \ldots, \rho_l}\left( \Phi^{(i)} \right)$
 belongs to the spaces   
$C^{\theta_l-r}_{\mu_l-t}\left(V, \W^{(i)} \right)$.   
Using \eqref{leibniz} and 
the cochain property for $\Phi^{(i)}$ we see that 
\begin{eqnarray*}
\delta^{\theta_l-r+1}_{\mu-t-1} \circ  
 \delta^{\theta_l-r}_{\mu_l-t} \left( \cdot_{\rho_1, \ldots, \rho_l} \Phi^{(i)} \right)=0,   
\quad 
\delta^{2-r}_{ex-t} \circ \delta^{1-r}_{2-t} \left(
\cdot_{\rho_1, \ldots, \rho_l} \Phi^{(i)} \right)=0.  
\end{eqnarray*}
Thus, the cochain property extends to the sequence of 
 $(\rho_1, \ldots, \rho_l)$-products   
$\cdot_{\rho_1, \ldots, \rho_l}$ 
$\left( C^{k_i}_{m_i}\left(V\right. \right.$, $\left.\left. \W^{(i)} \right) \right)$.  
\end{proof}
Finally, for elements of the spaces $C^{2, i}_{ex, i}\left(V, \W^{(i)} \right)$
 we obtain 
\begin{corollary}
The product of elements of the spaces $C^2_{ex}$ $\left(V, \W^{(ex)} \right)$ 
and  $C^{k_i}_{m_i}$ $\left(V, \W^{(i)} \right)$ is given by   
\eqref{bardos}, 
\begin{eqnarray*}
\cdot_{\rho_1, \ldots, \rho_l}: 
\times_{i=1}^{l_1} C^{2, i}_{ex, i} \left(V, \W^{(i)} \right)
 \times_{j=1}^{l_2}C^{k_i, i}_{m_i} \left(V, \W^{(i)} \right) 
\to C^{k_1+\ldots+k_{l_2}+ 2l_1 -r, i}_{m_i-t, i} \left(V, \W^{(i)} \right),  &&
\end{eqnarray*}
\begin{equation}
\label{pupa3}
\cdot_{\rho_1, \ldots, \rho_l}: \times_{i=1}^l C^{2, i}_{ex, i} \left(V, \W^{(i)} \right) 
\to C^{4-r, i}_{0, i} \left(V, \W^{(i)} \right).    
\end{equation}
\end{corollary}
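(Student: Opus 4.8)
The plan is to obtain this Corollary as a specialization of Proposition~\ref{tolsto} to the situation in which $l_1$ of the factors are drawn from the exceptional complexes $C^{2,i}_{ex,i}\left(V,\W^{(i)}\right)$. First I would invoke the inclusion chain of Lemma~\ref{lemmo}, namely $C^{2,i}_{m_i}\left(V,\W^{(i)}\right)\subset C^{2,i}_{ex,i}\left(V,\W^{(i)}\right)\subset C^{2,i}_{0,i}\left(V,\W^{(i)}\right)$, which lets each exceptional factor be treated as an element carrying upper degree $k_i=2$ and zero regularized transversal vertex operators. With this identification the sequence of products \eqref{bardos} of the $l_1$ exceptional factors together with the $l_2$ ordinary factors $\Phi^{(i)}\in C^{k_i}_{m_i}\left(V,\W^{(i)}\right)$ is already covered, as a map of $\W$-spaces, by Proposition~\ref{derga}, so the resulting rational function converges absolutely in all the $(\rho_1,\ldots,\rho_l)$ and defines an element of the tensor product space $\W^{(1,\ldots,l)}_{z_1,\ldots,z_{\theta_l-r}}$.

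Next I would verify membership in the claimed target complex by reproducing the argument of Proposition~\ref{tolsto}. The symmetry property \eqref{shushu} is supplied by Lemma~\ref{tarusa}, the $L_V(-1)$-derivative and $L_V(0)$-conjugation properties by Propositions~\ref{katas1} and~\ref{katas2}, and the regularized transversality to the correct number of vertex operators by Proposition~\ref{ccc}. The degree bookkeeping is then purely additive: each of the $l_1$ exceptional factors contributes upper degree $2$ and lower degree $0$, while the $l_2$ ordinary factors contribute $k_1,\ldots,k_{l_2}$ and $m_1,\ldots,m_{l_2}$; after the exclusion of the $r$ coinciding formal parameters and the $t$ coinciding transversal vertex operators (Subsection~\ref{kapusta}), this produces an element of $C^{k_1+\ldots+k_{l_2}+2l_1-r}_{\mu_l-t}\left(V,\W^{(1,\ldots,l)}\right)$, the first asserted map. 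The special map \eqref{pupa3} is the case $l_2=0$, $l=2$, where both factors are exceptional: the two upper degrees $2$ add to $4$, the lower degrees vanish, and one obtains $C^{4-r}_{0}\left(V,\W^{(i)}\right)$.

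The hard part will be confirming that the product of exceptional elements genuinely remains \emph{exceptional}, i.e.\ that it respects the transversal connection structure built into $C^{2,i}_{ex,i}$ rather than merely landing in the larger space $C^{2,i}_{0,i}$. For this I would return to the connection forms $G_{1,i}$ and $G_{2,i}$ of \eqref{pervayaforma} and \eqref{vtorayaforma} that characterize membership in the exceptional subspace, and show, using the Leibniz rule of Proposition~\ref{tosya} applied to the exceptional coboundary operator $\delta^{2,i}_{ex,i}$ of Definition~\ref{cobop}, that the transversal-connection form of the product decomposes as a $\cdot_{\rho_1,\ldots,\rho_l}$-combination of the transversal-connection forms of the factors. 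Here one must track, inside the intertwining-operator products \eqref{gendef}, the interplay between the homogeneous projections $P_{r_i}$ and the analytic-continuation regions defining \eqref{pervayaforma} and \eqref{vtorayaforma}; the compatibility is then secured by the short-complex relation \eqref{porto1}, $\delta^{2,i}_{ex,i}\circ\delta^{1,i}_{2,i}=0$, together with the structure \eqref{shorta} of the transversal connection complex. Once this compatibility is established the two displayed maps follow, completing the proof.
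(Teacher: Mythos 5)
Your first two paragraphs are correct and follow essentially the paper's own route: the paper obtains the Corollary exactly by specializing Propositions \ref{derga} and \ref{tolsto}, with each exceptional factor entering as an element of $C^{2,i}_{0,i}\left(V,\W^{(i)}\right)$ (so $k_i=2$, $m_i=0$) and the degrees adding to $k_1+\ldots+k_{l_2}+2l_1-r$ upstairs and $\mu_l-t$ downstairs. Your third paragraph, however, addresses a question the statement does not ask: both codomains in the Corollary are the ordinary spaces $C^{k_1+\ldots+k_{l_2}+2l_1-r,i}_{m_i-t,i}\left(V,\W^{(i)}\right)$ and $C^{4-r,i}_{0,i}\left(V,\W^{(i)}\right)$, not exceptional subspaces, so nothing needs to be verified about the product ``remaining exceptional,'' and the paper's proof accordingly contains no such step. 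Dropping that paragraph leaves a complete argument matching the paper's.
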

\begin{proof}
The number of formal parameters
 in the product \eqref{bardos}   is  
$k_1+\ldots+k_{l_2}+2l_1-r$. 
That follows from Proposition \eqref{derga}.   
Consider the product \eqref{bardos} for  
$C^{2, i}_{ex, i} \left(V, \W^{(i)} \right)$ and  
$C^{k_i}_{m_i} \left(V, \W^{(i)} \right)$.  
As in the proof of Proposition \ref{tolsto}, 
the total number $m_i-t$ of vertex operators the product 
$\Theta$ is adapted transversal is preserved.
Thus, we have to checked that on the right hand side of \eqref{pupa3} 
the number of vertex operators adapted transversal becomes $m_i-t$.   
\end{proof}
\section{The multiple-product cohomology classes}
\label{gv}
In this Section proofs of the main results of this paper are provided. 
In particular, we find invariant classes
 associated to the sequences of multiple products 
for a vertex algebra cohomology  
 for codimension one foliations.  
\subsection{The cohomology classes}
\label{cohomological}
In this Subsection, we introduce the 
cohomology classes 
 for codimension one foliations  
on complex curves associated to 
a grading-restricted vertex operator algebra.  
The cohomology classes for a codimension one foliation \cite{G, CM, Ko}  
were introduced starting with 
an extra transversality condition on 
 differential forms defining a foliation, and leading to 
the integrability condition.   
The elements of $\mathcal E$ in \eqref{deltaproduct} and $\mathcal E_{ex}$  
 are elements  
of spaces $C^{1, i}_{\infty, i}\left(V, \W^{(i)} \right)$ 
adapted transversal to 
an infinite number of vertex operators. 
The actions of coboundary operators $\delta^{k_i}_{m_i}$  
and $\delta^{2, i}_{ex, i}$ in \eqref{deltaproduct} and  
\eqref{halfdelta} are written as products  
 similar to as differential forms in Frobenius theorem \cite{G}.  
Using the sequence of multiple products  
we introduce cohomology     
classes of the form  
that are counterparts of the Godbillon class.      

 We call a map 
$\Phi^{(i)} \in C_{m_i}^{k_i}\left(V, \W^{(i)} \right)$,     
closed if it represents a closed connection 
$\delta^{k_i}_{m_i} \Phi^{(i)}=G\left(\Phi^{(i)}\right)=0$.  
For $m_i \ge 1$, we call it exact if there exists  
$\Psi^{(i)} \in  C_{m_i-1}^{k_i+1}\left(V, \W^{(i)} \right)$,    
such that 
$\Psi^{(i)}(v'_1, z'_1; \ldots; v'_{k_i+1}, z'_{k_i+1})=
\delta^{k_i}_{m_i} \Phi^{(i)}(v_1, z_1; \ldots; v_{k_i}, z_{k_i})$,  
 i.e., $\Psi^{(i)}$ is the form of a connection.   
For $\Phi^{(i)} \in C^{k_i}_{m_i}\left(V, \W^{(i)} \right)$
 we call the cohomology class of mappings 
 $\left[ \Phi^{(i)} \right]$ 
the set of all closed forms that differ from $\Phi^{(i)}$ by an  
exact mapping, 
i.e., for $\Lambda^{(i)} \in C^{k_i-1}_{m_i+1}\left(V, \W^{(i)} \right)$,   
$\left[ \Phi^{(i)} \right]= \Phi^{(i)} + \delta^{k_i-1}_{m_i+1} \Lambda^{(i)}$.     
The cohomology classes constructed in this paper
 are vertex algebra cohomology analogues  
 of the Godbillon class 
\cite{Ko}   
for codimension one foliations on complex curves.  
\subsection{Transversality conditions}
\label{multpartprod}
In this Subsection we consider the general classes of cohomology invariants which arise from 
 the definition of the product of pairs of  
$C^{k_i}_{m_i}\left(V, \W^{(i)} \right)$-spaces.  
Under a natural extra condition, the families 
 cochain complexes \eqref{hat-complex} and \eqref{hat-complex-half}  
allow us to establish relations among elements of $C^{k_i}_{m_i}\left(V, \W^{(i)} \right)$-spaces.  
By analogy with the notion of the integrability for differential forms \cite{G}, 
 we use here the notion of the transversality for the spaces of a complex. 
  
For the families cochain complexes \eqref{hat-complex} and \eqref{hat-complex-half}   
let us require that for cochain  complex spaces 
$C_{m_{i_j}}^{k_{i_j}}\left(V, \W^{(i_j)} \right)$,   
$1 \le i_1 < \ldots< i_j \le l$, $1 \le j \le k \le l$   
 there exist subspaces 
$\widetilde{C}_{m_i}^{k_i}
\left(V, \W^{(i)} \right)
\subset C_{m_i}^{k_i}\left(V, \W^{(i)} \right)$,  
 such that for   
 $\Phi^{(i_j)} \in \widetilde{C}_{m_{i_j}}^{k_{i_j}}\left(V, \W^{(i_j)} \right)$,  
and $1 \le n \le l$, 
$\left( \ldots, \delta^{k_{i_1}}_{m_{i_1}} \Phi^{(i_1)},  
\ldots, \delta^{k_{i_k}}_{m_{i_k}}      
\Phi^{(i_k)}, \ldots \right)=0$.    
Then we call the set of subspaces 
$\left\{ \widetilde{C}_{m_i}^{k_i}
\left(V, \W^{(i)} \right) \right\}$ 
orthogonal for all spaces 
 $C_{m_i}^{k_i}\left(V, \W^{(i)} \right)$, $i\ne i_j$ with respect to 
the product $\eqref{gendefgen}$.  
%
 Namely,  
$\delta^{k_{i_1}}_{m_{i_1}} \Phi^{(i_1)}$, $\ldots$, 
$\delta^{k_{i_j}}_{m_{i_j}} \Phi^{(i_j)}$,
 are supposed to be transversal to all other multiplicands  
 with respect to the product   
\eqref{product}.  
We call this the generalized transversality condition for mappings 
of the  families cochain complexes  
\eqref{hat-complex} and \eqref{hat-complex-half}.

In particular, the simplest case of the transversality  
is defined for some $1 \le i, p \le l$ by 
\begin{equation}
\label{ortho} 
\left( \ldots, \left(\delta^{k_i}_{m_i} \right)^{\delta_{i, p}}  
\Phi^{(i)}, \ldots \right)=0. 
\end{equation}
Note that in the case of differential forms considered on a smooth manifold, 
the Frobenius theorem for a distribution provides the transversality condition \cite{G}.   
The fact that both sides of a differential relation belong 
to the same cochain  complex space, applies limitations 
to possible combinations of $(k_i, m_i)$, $1 \le i \le j \le l$. 
Below we derive the algebraic relations occurring from the transversality condition on 
the families of cochain complexes \eqref{hat-complex} and \eqref{hat-complex-half}.  
Taking into account the correspondence    
with ${\rm \check {C}}$ech-de Rham complex due to \cite{CM}, 
we reformulate the derivation of the product-type invariants
in the vertex algebra terms. 
%
Recall that the Godbillon--Vey cohomology class \cite{G} 
is considered on codimension one foliations of 
three-dimensional smooth manifolds. 
In this paper, we supply its analogue for complex curves.  
 According to the definition \eqref{ourbicomplex} we 
have $m_i$-tuples of one-dimesional transversal sections. 
 In each section we attach one vertex operator
 $\omega_{W^{(i)}}(u_j, w_j)$, $u_{m_i} \in V$, $w_{m_i} \in U_{m_i, i}$,   
$1\le i \le l$, $1 \le j \le m_i$.    
Similarly to the differential forms setup, 
a mapping $\Phi^{(i)} \in 
 C_{m_i}^{k_i} \left(V, \W^{(i)} \right)$ defines  
a codimension one foliation. 
As we see from \eqref{gendef} and \eqref{leibniz}   
it satisfies the properties 
similar as differential forms do. 
 
Now, let us explain how we understand powers of 
 an element of $\W^{(i)}_{x_{1, i}, \ldots, x_{k_i, i}}$
in the multiple product \eqref{gendefgen}. 
Denote by 
$\Phi^{(i)}_{j_s}$ $=$ $\Phi^{(i)}$ $(g_i$;  $v_{1, i}$, $x_{1, i}$;   
$\ldots$; $v_{k_i, i}$, $)$ an element of 
$\W^{(i)}_{x_{1, i}, \ldots, x_{k_i, i}}$       
placed at a position $1 \le j_s \le l$, $1 \le s \le k$.  
We then have 
\begin{eqnarray}
\label{parasnya}
\left(\ldots, \left(\Phi^{(i)} \right)^k, \ldots \right)  
=  
\left(\ldots, \Phi^{(i)}_{j_1}, 
\ldots, \Phi^{(i)}_{j_2}, 
\ldots, \Phi^{(i)}_{j_r}, \ldots \right), \quad && 
\end{eqnarray}
with $\Phi^{(i)}$ placed at some positions 
$(j_1, \ldots, j_k)$.  
 
Letus introduce another kind of transversality conditions. 
We call the order ${\ord}\; \Phi$ of an element $\Phi$ in a product of the form 
\eqref{gendef} the number of appearance of $\Phi$. 
For two elements $\Phi$, $\Psi$ we can also define the mutual order 
as ${\ord}\; (\Phi, \Psi)=|{\ord}\; \Phi - {\ord}\; \Psi|$.  
\subsection{The commutator multiplications} 
\label{comult}
In this Subsection we define 
further multiple products  
of elements of the spaces $C^{k_i}_{m_i}\left(V, \W^{(i)} \right)$, $1 \le i \le l$,  
suitable for the formulation of cohomology invariants. 

For a set of indices 
$(i_1, i_2, i_{1, 2}, i_3, \ldots, i_{1, \ldots, l-1}, i_l)$   
ranging in $[1, \ldots, l]$, 
and corresponding complex parameters 
$(\rho_1, \rho_2, \rho_{1,2}, \ldots, \rho_{1, 2, \ldots, l-1}, \rho_l)$,   
 let us define  
 the additional 
 multiple products of elements    
  $\Phi^{(i)}$ $(g_i$; $v_{n_i+1}$, $z_{n_i+1}$;  
 $\ldots$ ; $v_{n_i+k_i-r_i}$, $z_{n_i+k_i- r_i})$      
$\in$ $C^{k_i}_{m_i}$ $\left(V \right.$, $\left. \W^{(i)} \right)$,  
as follows (for clarity of presentation, 
we omit here explicit dependence on the automorphism element, 
vertex algebra elements, formal parameters, and additional $\zeta$-parameters)  
 \begin{eqnarray}
\label{defproduct}
&&
*_{(i_1, i_2, i_{1, 2}, i_3, \ldots, i_{1, \ldots, l-1}, i_l)}:
 \times_{i=1}^l \W^{(i_p)}_{z_{1, i_p}, \ldots, z_{k_p, i_p}}  
\to \W^{(1, \ldots, l)}_{z_{k_1}, \ldots, z_{\theta_{l-r}}}, \;    
\end{eqnarray}
\begin{eqnarray*}
 && *_{(i_1, i_2, i_{1, 2}, i_3, \ldots, i_{1, \ldots, l-1}, i_l)}
\left(\Phi^{(i)}\right)_{1 \le i \le l}   
\nn
&& \quad = \left[ \left[ \ldots \left[\left[ \Phi^{(i_1)}
,_{ \cdot_{\rho_{i_1}, \rho_{i_2} }} \Phi^{(i_2)}\right]  
 ,_{ \cdot_{\rho_{i_{1, 2}}, \rho_{i_3} }} \Phi^{(i_3)}\right]  \ldots \right] 
   ,_{ \cdot_{\rho_{1, \ldots, l-1}, \rho_{i_l} }} \Phi^{(i_l)}\right],   
\end{eqnarray*}
where the brackets denote the commutator with respect to the  
$\cdot_{i_p, i_q}$-product defined on 
$\W^{(i_p)}_{z_{1, i_p}, \ldots, z_{k_p, i_p}} 
\times \W^{(i_q)}_{z_{1, i_q}, \ldots, z_{k_q, i_q}}$, 
$\left[\Phi^{(i_p)},_{i_p, i_q} \Phi^{(i_q)}\right] =  
\Phi^{(i_p)} \cdot_{\rho_{i_p}, \rho_{i_q}} \Phi^{(i_q)}   
- \Phi^{(i_q)} \cdot_{\rho_{i_q}, \rho_{i_p}} \Phi^{(i_p)}$,  
with respect to the $\cdot_{ \rho_{i_p}, \rho_{i_q} }$-product \eqref{gendef}. 

We are able to use also the total 
$(i_1$, $i_2$, $i_{1, 2}$, $\ldots$, $i_{1, 2, \ldots, i_{l-1}}$, $i_l)$-symmetrization 
\begin{equation}
\label{syma}
{\rm Sym}
\left(*_{(i_1, i_2, i_{1, 2}, \ldots, i_{1, 2, \ldots, i_{l-1}}, i_l)}
\left(\Phi^{(i)}\right)_{1 \le i \le l}\right),  
\end{equation}   
of the product \eqref{defproduct}.  
The form of \eqref{syma} is not unique of cause. 
We are able to form other types of products resulting from the products \eqref{gendef}. 
Nevertheless, \eqref{syma} is suitable for computation of cohomology invariants of 
foliations. 
Due to the properties of the maps $\Phi^{(i)}\in C_{m_i}^{k_i}\left(V, \W^{(i)} \right)$,  
$1 \le i \le l$ we obtain 
\begin{lemma}
\label{propo}
The products \eqref{syma}  
belong to the space  
$C_{\mu_l- t }^{\theta_l-r}$  
$\left(V \right.$ , $\W^{(1, \ldots, l)}$, $\left.\F \right)$.       
\hfill $\square$
\end{lemma}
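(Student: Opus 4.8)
The plan is to deduce the statement from Proposition \ref{tolsto}, which already establishes that the sequence of products $\cdot_{\rho_1, \ldots, \rho_l}$ carries $\times_{i=1}^l C^{k_i}_{m_i}\left(V, \W^{(i)} \right)$ into $C^{\theta_l-r}_{\mu_l-t}\left(V, \W^{(1, \ldots, l)} \right)$. Since the commutator products $*$ of Definition \ref{coma} are assembled by iterating the binary products $\cdot_{\rho_{i_p}, \rho_{i_q}}$ and forming differences, and since each such $C$-space is a linear space, the whole assertion should reduce to an induction on the number of factors together with closure under finite linear combinations.

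First I would treat the base case of a single commutator $\left[\Phi^{(i_p)},_{i_p, i_q} \Phi^{(i_q)}\right] = \Phi^{(i_p)} \cdot_{\rho_{i_p}, \rho_{i_q}} \Phi^{(i_q)} - \Phi^{(i_q)} \cdot_{\rho_{i_q}, \rho_{i_p}} \Phi^{(i_p)}$. By Proposition \ref{tolsto} applied to the two spaces $C^{k_{i_p}}_{m_{i_p}}\left(V, \W^{(i_p)} \right)$ and $C^{k_{i_q}}_{m_{i_q}}\left(V, \W^{(i_q)} \right)$, each ordered product lands in a single space $C^{k_{i_p}+k_{i_q}-r_{pq}}_{m_{i_p}+m_{i_q}-t_{pq}}\left(V, \W^{(i_p, i_q)} \right)$, where $r_{pq}$ and $t_{pq}$ record the coinciding formal parameters and the coinciding regularized-transversal vertex operators. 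The two orderings target the same space because the regularized-transversality conditions and the symmetry property \eqref{shushu} are $S_n$-invariant (the remark following Definition \ref{composabilitydef} and Lemma \ref{tarusa}), so reindexing does not alter $r_{pq}$ or $t_{pq}$; being a difference of two elements of one linear space, the commutator again lies in it.

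Next I would iterate. Having placed an iterated commutator of $p$ factors in a combined space $C^{\cdots}_{\cdots}\left(V, \W^{(i_1, \ldots, i_p)} \right)$, I would form its $\cdot_{\rho_{1, \ldots, p}, \rho_{i_{p+1}}}$-commutator with $\Phi^{(i_{p+1})}$ and invoke Proposition \ref{tolsto} once more, now reading $\W^{(i_1, \ldots, i_p)}$ as a single coefficient module and using $\W^{(i_1, \ldots, i_p)} \otimes \W^{(i_{p+1})} = \W^{(i_1, \ldots, i_{p+1})}$ from Proposition \ref{glavna} together with the associativity and additivity of the products noted after Definition \ref{wprodu}. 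By induction on the number of factors, the full iterated commutator $*_{(i_1, i_2, i_{1,2}, \ldots, i_l)}\left(\Phi^{(i)}\right)_{1 \le i \le l}$ belongs to $C^{\theta_l-r}_{\mu_l-t}\left(V, \W^{(1, \ldots, l)} \right)$. Finally, the symmetrization \eqref{syma} is a finite signed combination of such iterated commutators over reorderings of the index string, and by the $S_n$-invariance just cited every summand shares the same pair $(\theta_l-r, \mu_l-t)$, so closure of the linear space under finite combinations yields the claim.

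I expect the main obstacle to be the index bookkeeping, namely showing that every ordering and every bracketing branch produces exactly the same number $r$ of coinciding formal parameters and $t$ of coinciding transversal vertex operators, so that all summands genuinely share one target space $C^{\theta_l-r}_{\mu_l-t}\left(V, \W^{(1, \ldots, l)} \right)$ rather than a family of nearby spaces. This rests on the $S_n$-invariance of the regularized-transversality conditions and of \eqref{shushu}, and on verifying that the coincidence pattern among the $(x_{j, i})$ and among the transversal vertex operators is intrinsic to the chosen data and independent of the multiplication order.
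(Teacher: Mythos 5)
Your proposal is correct and takes essentially the same route as the paper: the paper states Lemma \ref{propo} with no written proof, treating it as an immediate consequence of Proposition \ref{tolsto} (each binary ordered product lands in the appropriate $C$-space) together with linearity of $C_{\mu_l-t}^{\theta_l-r}\left(V, \W^{(1,\ldots,l)}, \F\right)$ under the differences and signed sums that build the commutators and the symmetrization \eqref{syma}. Your induction on the number of factors and your observation that the coincidence counts $r$ and $t$ are intrinsic to the underlying data (hence the same for every ordering and bracketing) simply make explicit what the paper leaves unsaid.
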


For $i_p=i_q$, a self-dual bilinear pairing $\langle .,.\rangle$ 
for $W^{(i_p)}$, and 
$(g_{i_p}$; $v_{n_{i_p}}$, $z_{n_{i_p}}$;   
 $\ldots ; v_{n_{i_p+1}-1}, z_{n_{i_p+1}-1})$ $=$   
    $(g_{i_q}; v_{n_{i_q}}, z_{n_{i_q}}; 
 \ldots ; v_{n_{i_q+1}-1}, z_{n_{i_q+1}-1})$,  
 the product 
\begin{eqnarray}
\label{fifi}
&&
\Phi^{(i_p)} (g_{i_q}; v_{n_{i_q}}, z_{n_{i_q}}; 
 \ldots ; v_{n_{i_q+1}-1}, z_{n_{i_q+1}-1})
\nn
&&
\qquad *_{i_p,  i_q}  
 \Phi^{(i_p)} (g_{i_p}; v_{n_{i_p}}, z_{n_{i_p}}; 
 \ldots ; v_{n_{i_p+1}-1}, z_{n_{i_p+1}-1}) =0. 
\end{eqnarray}  
The product \eqref{defproduct} allows to introduce cohomology invariants 
associated with the condition \eqref{fifi} on $\Phi^{(i)}$.  
Namely, it is easy to prove the following 
\begin{proposition}
\label{kontora}
For the cochain  complex \eqref{hat-complex} elements 
$\Phi^{(i)} \in C^{k_i}_{m_i} \left(V, \W^{(i)} \right)$ 
 satisfying \eqref{fifi}
and the transversality condition 
\begin{equation}
\label{dundak}
\delta^{k_{i_s}}_{m_{i_s}} \partial_t \Phi^{(i_s)} 
*_{i_s, i_{s'}} \delta^{k_{i_{s'}}}_{m_{i_{s'}}}\Phi^{(i_{s'})}=0,  
\end{equation}
with $i_s, i_{s'}=i_p, i_q, i_r$, 
there exist the classes
 of non-vanishing cohomology invariants of the form 
$\left[ \delta^{k_{i_p}}_{m_{i_p}} \Phi^{(i_p)} 
*_{i_p, i_q} \left(\partial_t \Phi^{(i_q)}\right)^\beta 
*_{i_{p, q}, i_r} \Phi^{(i_r)} \right]$,    
not depending on the choice of $\Phi^{(i_s)}$. 
In particular, for the short complex \eqref{hat-complex-half}, 
one has 
$\left[ \delta^{1, i_p}_{2, i_p} \Phi^{(i_p)} \right.$ $*_{i_p, i_q}$      
 $\left(\Phi^{(i_q)} \right)$ $\left. *_{i_{p, q}, i_r} \right]$, 
$\left[ \delta^{0, i_p}_{3, i_p} \Lambda^{(i_p)} *_{i_p, i_q}    
 \left(\Lambda^{(i_q)} \right)^\beta 
*_{i_{p,q}, i_r} \Lambda^{(i_r)} \right]$,   
 are invariant, i.e., 
they do not depend on the choices of 
$\Phi^{(i_s)}$ $\in$ $C^{1, i_s}_{2, i_p}$ $\left(V, \W^{(i_s)} \right)$,    
$\Lambda^{(i_s)}$ $\in$ $C^{0, i_s}_{3, i_s}$ $\left(V, \W^{(i_s)} \right)$. 
\hfill $\square$ 
\end{proposition}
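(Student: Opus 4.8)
The plan is to follow the classical Godbillon--Vey argument \cite{G}, reading the commutator product $*$ of Definition \ref{coma} as the analogue of the wedge product, the coboundary operators $\delta^{k_i}_{m_i}$ as the analogue of the de~Rham differential, and the transversality relations \eqref{fifi}, \eqref{dundak} as the algebraic substitutes for the integrability condition $\omega\wedge d\omega=0$. First I would check that the object whose class we want is an honest cochain: by Lemma \ref{propo} the symmetrized products \eqref{syma}, and in particular $\delta^{k_{i_p}}_{m_{i_p}}\Phi^{(i_p)} *_{i_p,i_q}(\partial_t\Phi^{(i_q)})^\beta *_{i_{p,q},i_r}\Phi^{(i_r)}$, lie in $C^{\theta_l-r}_{\mu_l-t}\left(V,\W^{(1,\ldots,l)},\F\right)$, so it makes sense to pass to cohomology. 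Throughout I would use that the family parameter derivative $\partial_t$ commutes with the $\delta^{k_i}_{m_i}$ and with the products $*$: the coboundary operators and products of Sections \ref{coboundary} and \ref{productc} are built from fixed intertwining-operator and regularization data and depend on $t$ only through the smoothly varying $\Phi^{(i)}=\Phi^{(i)}(t)$, so $\partial_t\,\delta^{k_i}_{m_i}\Phi^{(i)}=\delta^{k_i}_{m_i}\,\partial_t\Phi^{(i)}$.

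The first substantive step is to prove closedness. Applying $\delta^{\theta_l-r}_{\mu_l-t}$ to the product and expanding with the Leibniz rule \eqref{leibniz}, I obtain a signed sum in which $\delta$ hits one factor at a time. When $\delta$ lands on the leading factor it produces $\delta^{k_{i_p}+1}_{m_{i_p}-1}\circ\delta^{k_{i_p}}_{m_{i_p}}\Phi^{(i_p)}$, which is zero by \eqref{deltacondition} (respectively \eqref{porto1} for the short complex). When $\delta$ lands on the middle factor, commuting $\partial_t$ through $\delta$ leaves the pairing $\delta^{k_{i_p}}_{m_{i_p}}\Phi^{(i_p)}*\delta^{k_{i_q}}_{m_{i_q}}\partial_t\Phi^{(i_q)}$, which is precisely the combination annihilated by the transversality condition \eqref{dundak}. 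When $\delta$ lands on the trailing factor it produces $\delta^{k_{i_p}}_{m_{i_p}}\Phi^{(i_p)}*(\partial_t\Phi^{(i_q)})^\beta*\delta^{k_{i_r}}_{m_{i_r}}\Phi^{(i_r)}$, a pairing of two $\delta$-images that is killed by the transversality relations \eqref{ortho}, \eqref{dundak} together with the self-annihilation \eqref{fifi}. Hence the product is closed and defines a class in $H^{\theta_l-r}_{\mu_l-t}\left(V,\W^{(1,\ldots,l)},\F\right)$.

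The second step is independence of the representatives. I would replace each admissible factor $\Phi^{(i_s)}$ by $\Phi^{(i_s)}+\delta^{k_{i_s}-1}_{m_{i_s}+1}\Lambda^{(i_s)}$ and expand the product once more by \eqref{leibniz}. The variation separates into a term in which the added exact piece is the one acted on by the outer $\delta$, which is manifestly a coboundary, and terms in which the exact piece sits undifferentiated beside the other $\delta$-images; these again vanish by the same pairing analysis using \eqref{dundak}, \eqref{ortho} and \eqref{fifi}. This is exactly what makes the bracketed quantities independent of the choice of $\Phi^{(i_s)}$. For the short complex \eqref{hat-complex-half} the only admissible degrees are $(k_{i_s},m_{i_s})=(1,2)$ for the $\Phi^{(i_s)}$ and $(0,3)$ for the $\Lambda^{(i_s)}$, which is why precisely the two stated families $\left[\delta^{1,i_p}_{2,i_p}\Phi^{(i_p)}*_{i_p,i_q}\Phi^{(i_q)}*_{i_{p,q},i_r}\right]$ and $\left[\delta^{0,i_p}_{3,i_p}\Lambda^{(i_p)}*_{i_p,i_q}(\Lambda^{(i_q)})^\beta*_{i_{p,q},i_r}\Lambda^{(i_r)}\right]$ occur, and the degree count through \eqref{conde} places each representative in the terminal space of \eqref{hat-complex-half}.

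The hard part will be the sign and bracket bookkeeping: the product $*$ of Definition \ref{coma} is a nested commutator rather than a strictly associative operation, so the Leibniz expansion \eqref{leibniz} must be applied inside each commutator slot and the resulting signs reconciled with the symmetrization \eqref{syma} before the transversality relations can be matched term by term. A secondary obstacle is the rigorous justification that $\partial_t$ commutes with the regularization $\widehat{\mathcal R}$ and the branch-choice $\widehat{\mathcal S}$ entering \eqref{gendef}; turning $\partial_t\,\delta\Phi^{(i_q)}$ into $\delta\,\partial_t\Phi^{(i_q)}$ under the regularized infinite sums requires the convergence established in Proposition \ref{derga} to be locally uniform in $t$, so that differentiation under the sum is legitimate.
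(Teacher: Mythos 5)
Your proposal is correct in outline and follows essentially the template the paper itself uses: the paper states Proposition \ref{kontora} with no standalone proof (``it is easy to prove''), and its de facto argument is the proof of Theorem \ref{talasa} in Subsection \ref{proofthe}, which proceeds exactly as you do --- closedness by applying $\delta$, splitting via the Leibniz rule \eqref{leibniz}, killing the first term by $\delta\circ\delta=0$ (\eqref{deltacondition}, \eqref{porto1}) and the remaining terms by the transversality conditions, then invariance by perturbing the representatives and expanding. The one substantive divergence is in the invariance step: you perturb by exact pieces $\Phi^{(i_s)}\mapsto\Phi^{(i_s)}+\delta^{k_{i_s}-1}_{m_{i_s}+1}\Lambda^{(i_s)}$, which shows the class depends only on the cohomology classes of the $\Phi^{(i_s)}$, whereas the paper perturbs by \emph{arbitrary} admissible elements $\eta^{(i_s)}$ in the same chain-cochain space and disposes of the cross-terms (there, via the mutual-order hypothesis of the Theorem; for the Proposition, via \eqref{fifi} and \eqref{dundak}). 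Since the Proposition asserts independence of the choice of $\Phi^{(i_s)}$ among all maps satisfying \eqref{fifi} and \eqref{dundak} --- not merely among cohomologous ones --- your variation is strictly weaker as stated; to match the claim you should allow a general admissible perturbation $\eta^{(i_s)}$ and note explicitly that the resulting cross-terms, each containing two $\delta$-images or a $\delta$-image paired against $\eta$-factors, vanish because the perturbed maps are themselves required to satisfy the transversality conditions. Your two flagged obstacles are well taken: the commutator bookkeeping for $*_{i_p,i_q}$ versus the plain $\cdot_{\rho_1,\ldots,\rho_l}$-product, and the commutation of $\partial_t$ with $\widehat{\mathcal R}$, $\widehat{\mathcal S}$ (requiring local uniformity in $t$ of the convergence in Proposition \ref{derga}) are both points the paper passes over in silence, and making them explicit strengthens rather than departs from its argument.
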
 
\subsection{Proof of Theorem \ref{talasa}} 
\label{proofthe}
Now we show that the analog of the integrability condition 
provides the generalizations of the product-type invariants   
for codimension one foliations on complex curves. 
Here we give a proof of the main statement of this paper, 
Theorem \ref{talasa} formulated in the Introduction. 
\begin{proof}
Suppose we consider products containing elements 
$\Phi^{(i_s)}$, $\Psi^{(i_{s})}$  
$\in$ $C^{k_{i_s}}_{m_{i_s}}$ $\left(V \right.$, $\left. \W^{(i_s)} \right)$,
with $i_s=i, i', i''$, 
 with the mutual orders satisfying 
$\ord \left(\delta^{k_{i_s}}_{m_{i_s}} \Phi^{(i_s)}, \Psi^{(i_{s'})}\right)<m+k-1$. 
For elements $\Phi^{(i_s)} 
\in C^{k_{i_s}}_{m_{i_s}}\left(V, \W^{(i_s)} \right)$,   
for $1 \le i_s \le n$, 
let us start with 
 the foliation $\F$  
 transversality condition \cite{Ko}  
\begin{equation}
\label{prontosan}
\left( \delta^{k_{i_s}}_{m_{i_s}} \partial_t \Phi^{(i_s)},  
 \delta^{k_{i_{s'}}}_{m_{i_{s'}}} \Phi^{(i_{s'})} \right)=0. 
\end{equation} 
for any pair of $i_s$ and $i_{s'}$,  $1 \le i_s, i_{s'} \le n$. 
Then, due to associativity of the products \eqref{gendef}, \eqref{gendefgen} 
and the definition \eqref{parasnya} of an $\W$-element powers 
it follows that 
\begin{equation}
\label{kabkab}
\left( \delta^{k_{i_s}}_{m_{i_s}} \partial_t \Phi^{(i_s)},  
 \delta^{k_{i_{s'}}}_{m_{i_{s'}}} \left( \Phi^{(i_{s'})}  \right)^k \right)=0, 
\quad 
\left( \delta^{k_{i_s}}_{m_{i_s}} \partial_t \Phi^{(i_s)},  
 \left(  \delta^{k_{i_{s'}}}_{m_{i_{s'}}} \Phi^{(i_{s'})} \right)^k \right)=0.  
\end{equation}
It is clear that if one of multiplicand in the product \eqref{gendef} 
is zero then the product vanishes. 
Let us show that the invariant \eqref{pampadur} is closed. 
Due to \eqref{prontosan} (\eqref{kabkab} 
\begin{eqnarray*} 
&& \delta.\left( 
 \left( \delta^{k_i}_{m_i} \Phi^{(i)} \right)^m,     
   \left(\partial_t \Phi^{(i')}\right)^\beta, 
   \left( \Phi^{(i'')} \right)^k  
   \right), 
\nn
&&= \left(  
 (-1)^{k_i+1} \delta^{k_i+1}_{m_i-1}. \left( \delta^{k_i}_{m_i} \Phi^{(i)} \right)^m,    
   \left(\partial_t \Phi^{(i')}\right)^\beta, 
   \left( \Phi^{(i'')} \right)^k  
  \right)
\nn
&&+\left( 
  \left( \delta^{k_i}_{m_i} \Phi^{(i)} \right)^m,   
  (-1)^{k_{i'}} \delta^{k_{i'}}_{m_{i'}}. \left(\partial_t \Phi^{(i')}\right)^\beta, 
   \left( \Phi^{(i'')} \right)^k   
    \right)
\nn
&& +\left( 
 \left( \delta^{k_i}_{m_i} \Phi^{(i)} \right)^m,   
 \left(\partial_t \Phi^{(i')}\right)^\beta, 
  (-1)^{k_{i''}}\delta^{k_{i''}}_{m_{i''}}. \left( \Phi^{(i'')} \right)^k  
    \right)=0,  
\end{eqnarray*}
i.e., \eqref{pampadur} is closed.  
Let us show   
 non-vanishing property of \eqref{pampadur}.  
Indeed, suppose 
$\left(    
 \left(\delta^{k_i}_{m_i} \Phi^{(i)}\right)^m,   
 \left(\partial_t \Phi^{(i')} \right)^\beta,   
   \left( \Phi^{(i'')} \right)^k     
   \right)=0$.  
 Then there exists 
$\Gamma^{(i)} \in C^n_\mu \left(V, \W^{(i)} \right)$, 
such that 
 $P^{(i)}_{(i, i, i', i'')}\delta^{k_i}_{m_i} \Phi^{(i)} =\left(      
 \Gamma^{(i)},  \left(\delta^{k_i}_{m_i} \Phi^{(i)}\right)^{m-1},     
  \left(\partial_t \Phi^{(i')} \right)^\beta,    
   \left( \Phi^{(i'')} \right)^k     
     \right)$,  
where $P^{(i)}_{(i, i, i', i'')}$ is the projection 
$P^{(i)}_{(i, i, i', i'')}: \W^{(i)} \to \W^{(i, i, i', i'')}$.  
 Both sides of the last equalities should belong to the same cochain  complex space.  
Indeed, 
$k_i+1 = n +(m-1)(k_i+1) + \beta k_{i'}+ k k_{i''}$, 
$m_i-1= \mu + (m-1)(m_i-1) + \beta m_{i'}+ k m_{i''}$. 
For a non-vanishing expression, 
$n$ or $\mu$ should be negative. 
Then we obtain  
$(2-m)k_i -m +1   - \beta k_{i'} - k k_{i''}<0$, 
and 
$(2-m)m_i + m-1 - \beta m_{i'} - k m_{i''} <0$. 
Now let us show that \eqref{pampadur} 
 is an invariant, i.e., 
it does not depend on the choice of 
$\Phi^{(i)} \in C^{k_i}_{m_i} \left(V, \W^{(i)} \right)$. 
 Substitute elements the $\Phi^{(i)}$, $\Phi^{(i')}$, $\Phi^{(i'')}$  by  
 elements added by 
$\eta^{(i)} \in C^{k_i}_{m_i} \left(V, \W^{(i)} \right)$, 
$\eta^{(i')} \in C^{k_{i'}}_{m_{i'}} \left(V, \W^{(i')} \right)$,
 $\eta^{(i'')} \in C^{k_{i''}}_{m_{i''}} \left(V, \W^{(i'')} \right)$, correspondingly.   
Since the multiple product is associative, 
we obtain 
\begin{eqnarray*}
 \left(  
 \left(\delta^{k_i}_{m_i}  \Phi^{(i)} + \delta^{k_i}_{m_i} \eta^{(i)}\right)^m,   
  \partial_t \left( \Phi^{(i')} + \eta^{(i')}\right)^\beta, 
    \left( \Phi^{(i'')} + \eta^{(i'')} \right)^k     
   \right)
\qquad \qquad \qquad \qquad \qquad \qquad &&
\nn
= \sum\limits_{{j=0, \atop j'=0}}^{m, k} C^{j, j'}_{m, k}
\left(  \left(\delta^{k_i}_{m_i}  \Phi^{(i)}\right)^j, 
 \left(\delta^{k_i}_{m_i} \eta^{(i)}\right)^{m-j},    
 \partial_t  \left( \Phi^{(i')} + \eta^{(i')}\right)^\beta, 
   \left(\Phi^{(i'')}\right)^k,  
\left(\eta^{(i'')}\right)^{k-j'} \right),  \qquad  &&
\end{eqnarray*}
where 
$C^{j, j'}_{m, k}=\left({m \atop j}\right)\left({k \atop j'} \right)$. 
The expression above splits in two parts relative to 
$\Phi^{(i)}$ and $\eta^{(i)}$. 
\begin{eqnarray*}
\left( \left(\delta^{k_i}_{m_i}  \Phi^{(i)} \right)^m,
  \left( \partial_t \Phi^{(i')} \right)^\beta, 
\left(\Phi^{(i'')} \right)^k 
  \right) 
+ \left( \left(\delta^{k_i}_{m_i}  \Phi^{(i)} \right)^m,
 \partial_t \left( \eta^{(i')}\right)^\beta, 
\left(\Phi^{(i'')} \right)^k 
  \right) \qquad \qquad \qquad && 
\end{eqnarray*}
\begin{eqnarray*}
+ \sum\limits_{{j=1, \atop j'=1}}^{m, k} C^{j, j'}_{m, k} 
\left( \left(\delta^{k_i}_{m_i}  \Phi^{(i)}\right)^{m-j},  
 \left(\delta^{k_i}_{m_i} \eta^{(i)}\right)^j,    
 \partial_t  \left( \Phi^{(i')} \right)^\beta, 
  \left(\Phi^{(i'')}\right)^{k-j'},  
\left(\eta^{(i'')}\right)^{j'} \right)  \qquad \qquad  &&
\nn
+\sum\limits_{{j=1, \atop j'=1}}^{m, k} C^{j, j'}_{m, k} 
\left(  \left(\delta^{k_i}_{m_i}  \Phi^{(i)}\right)^{m-j},  
 \left(\delta^{k_i}_{m_i} \eta^{(i)}\right)^j,    
 \partial_t  \left(  \eta^{(i')}\right)^\beta, 
  \left(\Phi^{(i'')}\right)^{k-j'},  
\left(\eta^{(i'')}\right)^{j'} \right).  \qquad \qquad  &&
\end{eqnarray*}
The terms except the first two vanish due to the mutual order condition 
of required in the Theorem. 
Then one can see that the cohomology class of \eqref{pampadur} is preserved. 
Similarly we show that
$\left( \left(\delta^{1, i}_{2, i} \Phi^{(i)} \right)^m \right.$,   
$\left(\partial_t \Phi^{(i')}\right)$,  
$\left. \left(\Phi^{(i'')}\right)^k \right)$  
and 
$\left(  \left(\delta^{0, i}_{3, i} \Lambda^{(i)} \right)^m,   
 \left(\partial_t \Lambda^{(i')}\right)^\beta, 
\left(\Lambda^{(i'')}\right)^k \right)$,   
 are invariant, i.e., 
it does not depend on the choices of 
$\Phi^{(i_s)} \in C^{1, i_s}_{2, i_s} \left(V, \W^{(i_s)} \right)$,    
$\Lambda^{(i_s)} \in C^{0, i_s}_{3, i_s} \left(V, \W^{(i)} \right)$, 
with  $i_s=i, i', i''$,
 satisfying the transversality condition \eqref{prontosan}
with the corresponding values of $i_s$, $i_{s'}$.  
\end{proof}

In this paper we provide results concerning complex curves. 
 They generalize to the case of higher dimensional complex manifolds.  
\section*{Acknowledgment} 
The author is supported by 
the Institute of Mathematics, Academy of Sciences of the Czech Republic (RVO 67985840). 
\section{Appendix: vertex operator algebras and matrix elements}
\label{grading}
In this Appendix we recall basic properties of 
grading-restricted vertex algebras \cite{Huang}
 and their modules. 
A vertex algebra   
$(V,Y_V,\mathbf{1}_V)$, 
\cite{FHL, K} 
  is a $\Z$-graded complex vector space  
$V = \coprod_{n\in\Z}\,V_{(n)}$,  $\dim V_{(n)}<\infty$,   
 for each $n\in \Z$.    
It is endowed with the linear map  
$Y_V:V\rightarrow {\rm End \;}(V)[[z,z^{-1}]]$,  
where $z$ is a formal parameter, 
and a 
distinguished vector $\mathbf{1}_V\in V$.   
The evaluation of $Y_V$ on $v\in V$ is called the vertex operator 
$Y_V(v)\equiv Y_V(v,z) = \sum_{n\in\Z}v(n)z^{-n-1}$,  
with components 
$(Y_V(v))_n=v(n)\in {\rm End \;}(V)$, where $Y_V(v,z)\mathbf{1}_V = v+O(z)$.
For the definition of a grading-restricted vertex algebra
and a grading-restricted generalized vertex algebra module we refer a reader 
to \cite{Huang}. 

 A quasi-conformal grading-restricted vertex algebra $V$ module 
 $W$ vector $A$ is called  
 primary of conformal dimension $\Delta(A) \in  \mathbb Z_+$ if  
 $L_W(k) A 
=
 0$,  $k > 0$,   
 $L_W(0) A 
=
 \Delta(A) A$. 
For $z' \in \C$, 
that vertex operators satisfy 
the translation property 
 $Y_W(u, z) = e^{-z' L_W(-1)} Y_W(u, z+z') e^{z' L_W(-1)}$.  
For $v\in V$, and $w \in W$, one defines 
 the intertwining operator 
\begin{eqnarray}
\label{interop}
 Y_{WV}^W: V\to W,  \quad 
v   \mapsto  Y_{WV}^W(w, z) v, 
\\
\label{wprop}
Y_{WV}^W(w, z) v= e^{zL_W(-1)} Y_W(v, -z) w. 
\end{eqnarray}
With the grading operator $L_W{(0)}$, 
the conjugation property 
for $a\in \C$ is 
\begin{equation}
\label{aprop}
 a^{L_W(0)} \; Y_W(v,z) \; a^{-L_W(0) }= Y_W \left(a^{L_W(0)} v, az\right).
\end{equation}

Now we recall 
  definitions and some properties of 
matrix elements for a grading-restricted vertex algebra $V$ \cite{Huang}.  
Let $W$ be a grading-restricted generalized $V$-module.   
In this paper we consider elements 
 $\Phi(g; v_1, z_1; \ldots; v_l, z_l) \in \W$, $l \ge 0$,  
endowed with an automorphism group ${\rm Aut}(V)$ elements $g$. 
Note that we assume that in $\Phi(g; v_1, z_1; \ldots; v_l, z_l)$
an automorphism $g$ acts first on elements 
of the corresponding module $W$. 
  The $\overline{W}$-valued function 
is given by 
\begin{eqnarray}
\label{pusnya}
&&
E^{(n)}_W\left(v_1, z_1; \ldots; v_n, z_n; 
\Phi(g; v'_1, z'_1; \ldots; v'_l, z'_l)\right)
\nn
&&
\qquad = E\left(\omega_W(v_1, z_1) \ldots \omega_W(v_n, z_n) \;  
\Phi(g; v'_1, z'_1; \ldots; v'_l, z'_l)\right),     
\end{eqnarray}
where 
$\omega_W(dz^{\wt(v)}\otimes v,z)=Y_W(dz^{\wt(v)}\otimes v,z)$,  
and an element $E(.)\in \overline{W}$ is given by   
$\langle w', E(g; \alpha) \rangle =R \langle w', g.\alpha \rangle$,   
$\alpha \in \overline{W}$ (here we use the notation of Subsection \ref{nahuy}).  
Here a group element $g$ is supposed to act both on $v'_j$, $1 \le j \le l$, 
and $v_i$, $1 \le i \le n$. 

For a number $l$ of generalized vertex algebra $V$-modules $W^{(i)}$,   
denote $\Phi^{(1, \ldots, l)} \in
 \W^{(1, \ldots, l)}_{z_1, \ldots, z_{k_1+\ldots+k_l-r}}$.  
 Then we define similarly  
\begin{eqnarray}
\label{xusnya}
E^{(m_1, \ldots, m_l)}_{W^{(1, \ldots, l)}}
\left(v_1, z_1; \ldots; v_{m_1+\ldots+m_l}, z_{m_1+\ldots+m_l}; 
\qquad \qquad \qquad \qquad \qquad \qquad 
\right. 
 && 
\nn
  \Phi^{(1, \ldots, l)}(g_1, \ldots, g_l; 
v_{m_1+\ldots+m_l+1}, z_{m_1+\ldots+m_l+1}; \ldots; 
\qquad \qquad    &&
\nn
 \left. v_{m_1+\ldots+m_l+k_1+\ldots+k_l}, z_{m_1+\ldots+m_l+k_1+\ldots+k_l})\right) 
\qquad   &&
\nn 
=
\sum_{u\in V_{(k)}, \; k \in \Z } \;
 \widehat{\mathcal R} \prod_{i=1}^l \rho_i^k  \langle w'_i, 
E^{(m_i)}_{W^{(i)}} 
\left(v_{1, i}, x_{1, i}; \ldots; v_{m_i, i}, x_{m_i, i};  \right. && 
\nn
\left. Y^{W^{(i)}}_{W^{(i)}V'} \left(
\Phi^{(i)}(g_i; v_{m_i+1, i}, x_{m_i+1, i}; \ldots; 
v_{m_i+k_i, i}, x_{m_i+k_i, i}; u, \zeta_{1, i}), \zeta_{2, i}  \right)
f_i.\overline{u} \right)  \rangle,  \quad   &&  
\end{eqnarray}
where $v_j$, $z_j$, $1 \le j \le m_1+\ldots+m_l+k_1+\ldots+k_l-r$ are 
vertex algebra elements and 
formal parameters for $\Phi^{(1, \ldots, l)}$, 
and $v_{i', i}$, $x_{i', i}$, $1 \le i' \le k_i-r_i$ are vertex algebra elements 
and formal parameters of $\Phi^{(i)}$.  
The form of \eqref{xusnya} is inspired 
by the adapted transversal condition for $\Phi^{(1, \ldots, l)}$. 
One defines also
$E^{W; (n)}_{WV'}$ $(\Phi(g$; $v'_1$, $z'_1$; $\ldots$; $v'_l$, $z'_l)$; 
$v_1$, $z_1$; $\ldots$; $v_n$, $z_n)$  
$=$ $E^{(n)}_W(v_1, z_1; \ldots;  v_n, z_n; 
\Phi(g; v'_1, z'_1; \ldots; v'_l, z'_l))$, 
which is an element of $\widetilde{W}_{z_1, \ldots, z_n}$.
In addition to that above, we define
$\left(E^{(l_1)}_{V;\;\one} \otimes \ldots \otimes E^{(l_n)}_{V;\;\one}\right).
\Phi:  
V^{\otimes m+n}\to  
\widetilde{W}_{z_1,  \ldots, z_{m+n}}$,  
\begin{eqnarray}
\label{kiznya}
\left(E^{(l_1)}_{V;\;\one}\otimes \ldots \otimes E^{(l_n)}_{V;\;\one}\right).
\Phi(g; v_1, z_1; \ldots; v_{m+n-1}, z_{m+n-1}) \qquad \qquad \qquad    &&
\nn
=E\left( \Phi \left(g; E^{(l_1)}_{V; \one}(v_1, z_1; \ldots;  v_{l_1}, z_{l_1});  \ldots;  
\right. \right.\qquad \qquad \qquad \qquad &&
\nn
 \left. \left. 
E^{(l_n)}_{V; \one}
(v_{l_1+\ldots +l_{n-1}+1}, z_{l_1+\ldots +l_{n-1}+1};  \ldots;  
 v_{l_1+\ldots +l_{n-1}+l_n}, z_{l_1+\ldots +l_{n-1}+l_n})\right)\right), &&  
\end{eqnarray}
and $E^{(m)}_W.
\Phi: V^{\otimes m+n}\to 
\widetilde{W}_{z_1, \ldots, z_{m+n-1}}$, 
  given by 
\begin{eqnarray*}
&& E^{(m)}_W.
\Phi(g; v_1, z_1; \ldots; v_{m+n}, z_{m+n}) 
\nn
&&
\qquad =E\left(E^{(m)}_W\left( v_1, z_1; \ldots; v_m, z_m; 
\Phi(g; v_{m+1}, z_{m+1}; \ldots; v_{m+n}, z_{m+n})\right)\right).  
\end{eqnarray*}
For $l_1=\ldots=l_{i-1}=l_{i+1}=1$, $l_i=m-n-1$, $1 \le i \le n$,  
 by $E^{(l_i)}_{V;\;\one}.\Phi$ we 
denote $(E^{(l_1)}_{V;\;\one}\otimes \ldots 
\otimes E^{(l_n)}_{V;\;\one}).\Phi$, 
(this notation is different that of \cite{Huang}).  
In \cite{Huang} the algebra of $E$-operators was derived. 

\end{document}